\documentclass[11pt]{article}
\usepackage{verbatim} 
\usepackage{graphicx}
\usepackage[utf8]{inputenc}
\usepackage{amsmath,amsfonts,amssymb,amscd,amsthm,txfonts,hyperref,float,tikz}
\usepackage{xcolor}
\oddsidemargin 1 cm 
\evensidemargin 1 cm 
\textwidth 15 cm 
\topmargin 0 cm 
\textheight 22.5 cm

\newtheorem{theorem}{Theorem}[section]
\newtheorem{proposition}[theorem]{Proposition}
\newtheorem{lemma}[theorem]{Lemma}
\newtheorem{corollary}[theorem]{Corollary}
\newtheorem{remark}{Remark}[section]

\theoremstyle{definition}
\newtheorem{definition}[theorem]{Definition}
\newtheorem{notation}[theorem]{Notation}

\newcommand{\R}{\mathbb{R}}

\newcommand{\N}{\mathbb{N}}
\newcommand{\Z}{\mathbb{Z}}
\newcommand{\T}{\mathbb{T}}

\newcommand{\E}{\mathbb E}

\newcommand{\an}[1]{\langle #1 \rangle}

\newcommand{\tildeA}{\mathcal{\tilde A}}

\begin{document}

\title{Discrete wave turbulence for the Benjamin-Bona-Mahony equation, Part I: oscillations for the correlations between the solutions and its initial datum}
\author{Anne-Sophie de Suzzoni\footnote{CMLS, \'Ecole Polytechnique, CNRS, Universit\'e Paris-Saclay, 91128 PALAISEAU Cedex, France, \texttt{anne-sophie.de-suzzoni@polytechnique.edu}}}

\maketitle

\begin{abstract}
We investigate different problems regarding wave turbulence for the Benjamin-Bona-Mahony (BBM) equation in the context of discrete turbulence regime. In the part I, we investigate the behaviour of the correlations between the solution to the BBM equation at latter times with the initial datum.
\end{abstract}

\section{Introduction}

The object of weak turbulence is to study the dynamics of the statistics of solutions to equations coming from fluid mechanics or quantum mechanics when the initial datum is chosen random. Namely, one considers a nonlinear Cauchy problem with an initial datum whose Fourier coefficients are chosen Gaussian and independent (see \cite{Naz} for an extension to random phase approximation). Here, we consider the Benjamin-Bona-Mahony equation : 
\begin{equation}\label{BBMnu1}
\left \lbrace{\begin{array}{cc}
\partial_t u_{L,\varepsilon} + \partial_t \partial_x^2 u_{L,\varepsilon} + \partial_x u_{L,\varepsilon} + \frac\varepsilon{2} \partial_x (u_{L,\varepsilon}^2) =0 & \textrm{on } L\T\\
u(t=0) = a_L = \sum_{\xi \in \frac1{L}\Z^*} a(\xi) \frac{e^{i\xi x}}{(2\pi L)^{d/2}} g_{L \xi}& 
\end{array}} \right. 
\end{equation}
where $a$ is some even, real-valued bounded function and the $(g_k)_{k\in \N^*}$ are independent complex Gaussian variables, and $g_{-k} = \bar g_k$. Here, we are interested in the evolution of the correlations
\[
n(t,\xi,\xi') = \lim_{L\rightarrow \infty}\lim_{\varepsilon \rightarrow 0}\E(\hat u_{L,\varepsilon}(0,\xi) \hat u_{L,\varepsilon}(\varepsilon^{-2}t,\xi')) 
\]
where $\E$ is the expectation with regards to the underlying probability space defining the initial datum, where $\hat u_{L,\varepsilon}(t, \xi)$ is the Fourier coefficient of the solution $u_{L,\varepsilon}$ at time $t$ for the wavenumber $\xi$. 

For symmetry reasons, namely the law of the initial datum and the equation are invariant under the action of space translations, when $\xi\neq \xi'$ the above quantity is null at all times. Therefore, we consider only
\[
n(t,\xi) = \lim_{L\rightarrow \infty}\lim_{\varepsilon \rightarrow 0}\E(\hat u_{L,\varepsilon}(0,\xi) \hat u_{L,\varepsilon}(\varepsilon^{-2}t,\xi)). 
\]

Note that we have two asymptotic regimes. One is $L\rightarrow \infty$ which corresponds to taking the size of the torus to $\infty$. The second one consists in taking the size of the nonlinearity $\varepsilon$ to $0$. This means that for the correlations to display some nontrivial dynamics, one should consider the solution at very large time in terms of the size of the nonlinearity, $\varepsilon^{-2}$, in order to witness some nonlinear effects. 

We mention that wave turbulence has been introduced by Peierls in the late 1920s, in \cite{Peierls1} for cristals, a work that was followed by works by Brout and Prigogine, \cite{Brout-Prigo}. In the 1960s, wave turbulence was developed for plasma physics \cite{Vedenov1967,ZS67}, and in the context of water waves and primitive equations, for example the works by Benney and co-authors \cite{Benney}, by Hasselman \cite{Hass1,Hass2}. Finally, it was popularised by Zakharov in the 60s. Zakharov introduced in particular the now-called Kolmogorov-Zakharov spectra, \cite{KZspectra}, that models a specific momentum transport. This is a very active field in physics. 

In recent years, an interest has been developped for weak turbulence in the mathematics community. For instance,  Lukkarinnen and Spohn, \cite{LukSpo} described some mixing effect of the dynamics for discrete Schrödinger equations. Nikolay Tzvetkov and the author studied the small nonlinearity limit in \cite{dSTont,dS15} for fluid model equations (Kortweg-de Vries, Benjamin-Bona-Mahony, etc). It was then followed by series of works on Schrödinger equations which culminated on the one hand with the derivation by Deng and Hani, \cite{denghani2021,denghani2023} of the kinetic equation (the equation describing the evolution of correlations in the kinetic limit) for cubic Schrödinger equations 
when the size of the torus and the nonlinearity relate in ways called mesoscopic and kinetic in the physics literature; 
and one the other hand by this same type of derivation for some model fluid equations by Staffilani and Tran, \cite{staffilanitran} 
in the kinetic regime (first large box limit, then small nonlinearity).  We also mention \cite{DHPropag,CoG19,denghani19,BGHS,DyKuk1,DyKuk2,Dykuk3,PropagChaos}. The kinetic equation per se has been studied in \cite{collot2022stability,menegaki2022l2stability}.

Another topic is the study of finite box effects, see \cite{KARTASHOVA1,KARTASHOVA2} in the physics literature. Kinetic equations are derived in the kinetic regime (first $L \rightarrow \infty$, then $\varepsilon\rightarrow0$). However, one may consider the opposite regime that is taking first the nonlinearity to $0$, and then the large box limit. Practically speaking, this means that one is conducting an experiment in a large tank of water, with a small initial datum, such that the water tank's size is very small in front of the inverse of the size of the initial datum. 
This regime induces finite box effects (a chaotic behavior) that have been observed experimentally, see \cite{expFiniteBox}. This is a phenomenon observed in \cite{SchroQuint}, where the author proved that for the Schrödinger equation, the derivative of correlations  had an infinite number of singularities in the finite box regime. This regime was also studied for the Schrödinger equation with a forcing term in \cite{DymKukDiscrete}.

Here, we consider this discrete wave turbulence regime for BBM. From the physics literature, it is believed that what is driving the dynamics are resonances of order $1$. But BBM does not display such resonances. Here, we prove that the first nonlinear effect we observe is a phase dynamics. We observe that the correlations between the solution at time $t\varepsilon^{-2}$ and its initial datum differs from its initial value by a phase. We can compute it explicitely when the initial datum has for law an invariant measure for the BBM equation. In the work by Lukkarinnen and Spohn \cite{LukSpo}, the initial datum is an invariant measure for the Schrödinger equation in a discrete setting. The kinetic regime was considered and the result was of a substantially different nature.

\subsection{Framework and result}

Writing $W$ the operator defined as 
\[
W = (1-\partial_x^2)^{-1}\partial_x
\]
which is a Fourier multiplier by $i\omega(\xi)$ with
\[
\omega(\xi) = \frac{\xi}{1+\xi^2}
\]
we can rewrite Equation \eqref{BBMnu1} as
\begin{equation}\label{BBMnu}
\partial_t u_{\varepsilon,L} + W u_{\varepsilon,L} +\varepsilon  W (u^2_{\varepsilon,L}) = 0 .
\end{equation}
We write its flow $\Psi_{\varepsilon,L}$.

Let $(g_n)_{n\geq 1}$ be a sequence of independent complex Gaussian variables on a probability space $(\Omega, \mathcal F, \mathbb P)$ and set $g_{-n} = \bar g_n$. Let $\varphi \in \an{\xi}^{-1} L^2(\R)$ even and bounded. We assume that the derivative of $\varphi$ is a bounded function.

Alternatively, we assume that $\varphi(\xi) = \frac1{\an{\xi}}$.

We define
\[
\varphi_L (x)= \sum_{n\in \Z^*} g_n \varphi(\frac{n}{L}) \frac{e^{inx/L}}{\sqrt{2\pi L}}.
\]
This sum converges in $L^2(\Omega, H^s(L\T))$ for all $s<\frac12$.

We write its law $\mu_L$. Namely, for all $A$ in the Borel sigma-algebra of $\cap_{s<\frac12} H^s$, we have
\[
\mu_L(A) = \mathbb P( \varphi_L^{-1}(A)).
\] 

In the case that $\varphi(\xi) = \an{\xi}^{-1}$, we write the corresponding measure $\nu_L$.

In \cite{moimeme}, we proved the invariance of $\nu_L$ under the flow of BBM. For $\nu_L$ almost all $u_0 \in \cap_{s<1/2} H^s$, the flow $\Psi_{L,\varepsilon}$ of \eqref{BBMnu} is globally well-posed and we have for all $t\in \R$, all $A$ in the Borel sigma-algebra of $\cap_{s<\frac12} H^s$,
\[
\nu_L(\Psi_{\varepsilon,L}(t)^{-1}(A)) = \nu_L(A).
\]

Of course, the proof in \cite{moimeme} requires some adaptation to $L$ and $\varepsilon$, but it is based on the conservation of the $H^1$ norm under the flow of BBM on the torus, which is still valid for \eqref{BBMnu}.

We are interested in the behaviour of the correlations of the initial datum $\varphi_L$ and the solution at ulterior times $t\varepsilon^{-2}$, $\Psi_{\varepsilon,L}(t\varepsilon^{-2})(\varphi_L)$ in the discrete weak turbulence regime. The linear flow induces rapid oscillations, we remove it to observe the next order. Set $S(t) = \Psi_{0,L}(t)$. 

\begin{theorem}\label{th:main} There exists $C$ such that for all $L\geq 1$ such that $L^2$ is not algebraic of degree less than $2$, for all $t\in \R$, there exists $f_{L,t} : \R_+^*\rightarrow \R_+$ that goes to $0$ at $0$ such that for all $\xi \in \frac1{L}\Z^*$, all $\varepsilon>0$, we have 
\[
\Big|\E( \overline{\widehat {S(t)(\varphi_L)}(\xi)}\hat \Psi_{\varepsilon,L}(t\varepsilon^{-2})(\varphi_L)(\xi)) - e^{it \xi \Phi(\xi)}|\varphi(\xi)|^2 \Big|  \leq \frac{e^{C(1+|t|)}}{L} |\varphi(\xi)|^2 + f_{L,t}(\varepsilon) |\varphi(\xi)|.
\]
where 
\[
\Phi(\xi) = \frac2{\pi}
\int d\eta \frac{|\varphi(\eta)|^2(1+\eta^2)}{(3+\frac{\xi^2+ \eta^2+(\xi-\eta)^2}{2})(3+ \frac{\xi^2+\eta^2 + (\xi+\eta)^2}{2})}.
\]
In the case that $\varphi(\xi) = \an{\xi}^{-1}$, we have 
\[
\Phi(\xi)=
\frac{1}{(3+\xi^2) \sqrt{3(1+\xi^2/4)}} .
\]
\end{theorem}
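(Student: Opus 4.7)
My plan is a Duhamel/Picard expansion of $\Psi_{\varepsilon,L}$ in the interaction picture, followed by a Wick expansion of the resulting Gaussian moments. Setting $v(T) = S(-T)\Psi_{\varepsilon,L}(T)\varphi_L$, unitarity of $S(T)$ gives
\[
\E\bigl(\overline{\widehat{S(T)\varphi_L}(\xi)}\,\widehat{\Psi_{\varepsilon,L}(T)\varphi_L}(\xi)\bigr) = \E\bigl(\overline{\hat\varphi_L(\xi)}\,\hat v(T,\xi)\bigr),
\]
and $v$ satisfies in Fourier an equation whose right-hand side is $\varepsilon$ times a quadratic oscillatory convolution with three-wave phase $\Omega(\xi,\eta) := \omega(\eta)+\omega(\xi-\eta)-\omega(\xi)$. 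A direct computation with $\omega(k)=k/(1+k^2)$ factors this as
\[
\Omega(\xi,\eta) = \frac{\xi\eta(\xi-\eta)(3+\xi^2+\eta^2-\xi\eta)}{(1+\xi^2)(1+\eta^2)(1+(\xi-\eta)^2)},
\]
and since $3+\xi^2+\eta^2-\xi\eta \geq 3$, we see that $\Omega(\xi,\eta)=0$ forces $\eta\in\{0,\xi\}$: BBM carries no genuine three-wave resonances. The hypothesis that $L^2$ is not algebraic of degree $\leq 2$ will be used later in obtaining a uniform lower bound for $|\Omega(\xi,\eta)|$ on the grid $\frac{1}{L}\Z^*\setminus\{\xi\}$.

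Iterating Duhamel yields $v(T) = \varphi_L + \varepsilon v_1(T) + \varepsilon^2 v_2(T) + R_\varepsilon(T)$, with $v_k$ homogeneous of degree $k+1$ in the Gaussians $(g_n)$; on the time scale $T = t\varepsilon^{-2}$, the remainder $R_\varepsilon$ is bounded by iterating the equation and invoking the $H^1$-conservation/wellposedness adapted from \cite{moimeme}, contributing via Cauchy--Schwarz the $f_{L,t}(\varepsilon)|\varphi(\xi)|$ error. Taking the correlation with $\overline{\hat\varphi_L(\xi)}$: the zeroth order gives $|\varphi(\xi)|^2$; the order $\varepsilon$ is a third moment of centered complex Gaussians and vanishes; the order $\varepsilon^2$ is a fourth moment, which by Wick's theorem splits into three pairings, one of which forces $k_2 = \xi-k_1 = 0$ and dies via the factor $\omega(k_2)=0$. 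The two surviving (symmetric) pairings produce
\[
-\frac{2\omega(\xi)|\varphi(\xi)|^2}{\pi L}\sum_{\eta \in \frac{1}{L}\Z^*\setminus\{\xi\}} |\varphi(\eta)|^2\omega(\xi-\eta)\int_0^T\!\!\int_0^{s_1} e^{-i(s_1-s_2)\Omega(\xi,\eta)}\,ds_2\,ds_1.
\]
The double integral equals $-iT/\Omega + (1-e^{-iT\Omega})/\Omega^2$; at $T = t\varepsilon^{-2}$ the oscillatory term is uniformly bounded, and the $-it/\Omega$ part, combined with the algebraic identity $\omega(\xi)\omega(\xi-\eta)/\Omega(\xi,\eta) = (1+\eta^2)/[\eta(3+\xi^2+\eta^2-\xi\eta)]$ and the symmetrisation $\eta\leftrightarrow-\eta$, yields a Riemann sum that converges (with $O(1/L)$ error) to the integral defining $\xi\Phi(\xi)$, giving the main contribution $it\xi\Phi(\xi)|\varphi(\xi)|^2$.

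The main remaining obstacle is upgrading the second-order output $|\varphi(\xi)|^2(1+it\xi\Phi(\xi))$ to the full exponential $|\varphi(\xi)|^2 e^{it\xi\Phi(\xi)}$. A natural route is a Dyson-type self-consistent argument: repeat the second-order analysis on short increments $[T,T+\tau]$ with $1 \ll \tau \ll \varepsilon^{-2}$, exploiting that the effective renormalisation rate $i\varepsilon^2\xi\Phi(\xi)$ is state-independent at leading order, to derive $F_\varepsilon(T+\tau) = (1+i\varepsilon^2\tau\xi\Phi(\xi))F_\varepsilon(T) + \text{error}$ for $F_\varepsilon(T) := \E(\overline{\hat\varphi_L(\xi)}\hat v(T,\xi))$; iterating over $\sim t\varepsilon^{-2}/\tau$ such increments and applying discrete Gr\"onwall produces the exponential, with the accumulated error giving the $e^{C(1+|t|)}$ prefactor. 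Carrying this out rigorously requires uniform-in-$(T,\varepsilon,L)$ control of the non-resonant sums and of $R_\varepsilon$, which rests on the Diophantine lower bound for $|\Omega(\xi,\eta)|$ from the arithmetic condition on $L^2$. Finally, the closed-form expression in the invariant case $\varphi(\xi)=\an{\xi}^{-1}$ is a direct residue computation: then $(1+\eta^2)|\varphi(\eta)|^2 \equiv 1$, the integrand becomes $1/[(3+\xi^2+\eta^2)^2-\xi^2\eta^2]$, a rational function of $\eta$ with quartic denominator $\eta^4+(6+\xi^2)\eta^2+(3+\xi^2)^2$ having no real zeros, and closing the contour in the upper half-plane yields $\Phi(\xi) = 1/((3+\xi^2)\sqrt{3(1+\xi^2/4)})$.
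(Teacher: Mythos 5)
Your first two orders of the expansion are computed correctly, and the residue computation for the invariant case is right. But there are two issues, one a misattribution of the arithmetic hypothesis and one a genuine gap at the step where you pass from the second-order result $1+it\xi\Phi(\xi)$ to the full exponential.

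First, the arithmetic condition. You claim the hypothesis that $L^2$ is not algebraic of degree $\leq 2$ gives a Diophantine lower bound for the three-wave phase $\Omega(\xi,\eta)=\omega(\eta)+\omega(\xi-\eta)-\omega(\xi)$. But as your own factorisation shows, $\Omega$ vanishes only at $\eta\in\{0,\xi\}$ regardless of $L$, and since the relevant frequencies lie on the finite lattice $\frac1L\Z\cap[-\varepsilon^{-\alpha},\varepsilon^{-\alpha}]$ a positive lower bound holds automatically without any arithmetic hypothesis (it just depends badly on $L,\varepsilon$, which the paper tolerates because it only enters the $f_{L,t}(\varepsilon)$ error). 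The arithmetic condition is about \emph{second-order} (four-wave) resonances: it is used in Lemma \ref{lem:notalgebraic} to show that the equation $\omega(\xi)-\omega(\xi_1)-\omega(\xi_2)-\omega(\xi_3)=0$ on the lattice has only the two trivial pairing solutions. That fact is what drives Proposition \ref{prop:rewriteK}: in the convolution defining $K$, the delta function $\delta(\Delta^\xi_{\xi_1,\xi_2+\xi_3}+\Delta^{\xi_2+\xi_3}_{\xi_2,\xi_3})$ forces $\{\xi_2,\xi_3\}=\{\xi,-\xi_1\}$, which is what collapses each tree contribution to a multiplier $U_A^\sigma(a,\xi)\hat a(\xi)$ and ultimately permits the closed-form resummation.

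Second, and more seriously, the discrete Gr\"onwall / self-consistent step is not a proof but a restatement of the goal. To write $F_\varepsilon(T+\tau)=(1+i\varepsilon^2\tau\xi\Phi(\xi))F_\varepsilon(T)+\text{error}$ you would re-expand the solution via Duhamel from time $T$, but the data at time $T$, namely $v(T)$, is a nonlinear functional of the original Gaussians, so Wick's theorem does not apply to it, the order-$\varepsilon$ correlation is no longer a vanishing third Gaussian moment, and the order-$\varepsilon^2$ pairing count is not the same. The assertion that "the effective renormalisation rate $i\varepsilon^2\xi\Phi(\xi)$ is state-independent at leading order" is essentially the content of the theorem; it is exactly what the paper proves by the two cancellation mechanisms. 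Concretely, the paper expands the full Dyson series in ordered binary trees (Section \ref{subsec:FD}), isolates a leading term $\tilde F_A^\sigma$ at each even order (Proposition \ref{prop:estFsigma}), shows that an antisymmetry under transposing consecutive incomparable nodes kills all contributions except those of $\tilde{\mathcal A}_n$ (Proposition \ref{prop:fstcancel}), and then uses the triviality of four-wave resonances to prove $\sum_{(A,\sigma)\in\tilde{\mathcal A}_n}U_A^\sigma(a,\xi)=U(a,\xi)^n$ (Proposition \ref{prop:rewriteK2}): the sum over trees telescopes into a power, i.e.\ only the comb contributes. Summing $\sum_n(-i\varepsilon^2 t U(a,\xi))^{n/2}/(n/2)!$ then yields the exponential directly, with no bootstrap. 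The remainder at each order is controlled by Proposition \ref{prop:sndestonPsin}, and the estimate is propagated to all times $t$ by splitting $[0,t\varepsilon^{-2}]$ into $O(t)$ windows and invoking either invariance of $\nu_L$ or conservation of the $H^1$ norm (Proposition \ref{prop:psiandtildepsiPropagated} and Section \ref{subsec:invar}); this is where the $e^{C(1+|t|)}$ comes from, not from a Gr\"onwall accumulation at short scale $\tau$. Without the tree-level cancellations, or some equivalent quasi-Gaussianity statement for $v(T)$ that you have not supplied, your iteration scheme does not close.
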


We deduce in particular the following corollary.

\begin{corollary} Let $(L_n)_n$ be a sequence of number such that above a certain rank $n_0$, all $L_n^2$ are not algebraic of degree less that $2$ and such that $L_n\rightarrow \infty$, let $(\xi_n)_n$ be a sequence of real numbers such that for all $n\in \N$, $\xi_n \in \frac1{L_n}\Z^*$ and such that $(\xi_n)_n$ converges to some $\xi \in \R$. We have
\[
\lim_{n\rightarrow \infty} \lim_{\varepsilon \rightarrow 0} \E( \overline{\widehat {S(t)(\varphi_L)}(\xi)}\hat \Psi_{\varepsilon,L}(t\varepsilon^{-2})(\varphi_L)(\xi)) = e^{it\xi \Phi(\xi) }|\varphi(\xi)|^2.
\]
\end{corollary}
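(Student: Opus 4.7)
Pass to the interaction picture by setting $a_\varepsilon(s,\xi) = e^{i\omega(\xi)s}\widehat{\Psi_{\varepsilon,L}(s)\varphi_L}(\xi)$, so that $\overline{\widehat{S(s)\varphi_L}(\xi)}\widehat{\Psi_{\varepsilon,L}(s)\varphi_L}(\xi) = \overline{\hat\varphi_L(\xi)}\,a_\varepsilon(s,\xi)$ at the natural long time $s=t\varepsilon^{-2}$, cancelling the rapid linear phase. The equation for $a_\varepsilon$ reads
\[
\partial_s a_\varepsilon(s,\xi) = -\frac{i\varepsilon\omega(\xi)}{\sqrt{2\pi L}}\sum_{\eta+\mu=\xi} e^{is\Omega(\xi,\eta,\mu)}\,a_\varepsilon(s,\eta)\,a_\varepsilon(s,\mu),
\]
with $\Omega(\xi,\eta,\mu)=\omega(\xi)-\omega(\eta)-\omega(\mu)$. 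Iterate Duhamel to write $a_\varepsilon = \sum_{k\ge 0}\varepsilon^k A^{(k)}[\hat\varphi_L]$, where $A^{(k)}$ is a $(k+1)$-linear symmetric functional of $\hat\varphi_L$ built from $k$ nested time integrals and $k$ oscillatory phases.

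\textbf{Leading nontrivial contribution.} The order-$\varepsilon$ term contributes zero to $\E(\overline{\hat\varphi_L(\xi)}\,a_\varepsilon(s,\xi))$ because $\E(\bar g_\xi g_\eta g_\mu)=0$ for complex Gaussians with $g_{-n}=\bar g_n$ (all odd moments vanish). At order $\varepsilon^2$, Wick's theorem applied to $\E(\overline{\hat\varphi_L(\xi)}\hat\varphi_L(\eta)\hat\varphi_L(\eta')\hat\varphi_L(\mu'))$ gives three pairings: $(-\xi,\eta)(\eta',\mu')$ forces the excluded index $\mu=\eta'+\mu'=0$, while the other two pairings contribute identically with $\mu=\xi-\eta$, yielding a factor $2|\varphi(\xi)|^2|\varphi(\eta)|^2$ and phase $e^{i(t_1-t_2)\Omega(\xi,\eta,\xi-\eta)}$. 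The key algebraic identity
\[
\Omega(\xi,\eta,\xi-\eta) \;=\; \frac{-(\xi-\eta)\,\xi\,\eta\,(3+\xi^2+\eta^2-\xi\eta)}{(1+\xi^2)(1+\eta^2)(1+(\xi-\eta)^2)},
\]
the asymptotic $\int_0^{t\varepsilon^{-2}}\!\!\int_0^{t_1}e^{i(t_1-t_2)\Omega}\,dt_2\,dt_1 = it\varepsilon^{-2}/\Omega + O(\Omega^{-2})$ for $\Omega\ne 0$, and the symmetrization $\eta\leftrightarrow -\eta$ (which both removes the apparent pole at $\eta=0$ and produces the product $(3+\xi^2+\eta^2-\xi\eta)(3+\xi^2+\eta^2+\xi\eta)$ appearing in $\Phi$), together yield
\[
\varepsilon^2\,\E\bigl(\overline{\hat\varphi_L(\xi)}\,A^{(2)}(t\varepsilon^{-2},\xi)\bigr) \;=\; it\xi\Phi_L(\xi)\,|\varphi(\xi)|^2 + o_\varepsilon(1),
\]
where $\Phi_L$ is the Riemann sum approximating $\Phi$ with $\Phi_L - \Phi = O(1/L)$, thanks to the bounded-derivative assumption on $\varphi$.

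\textbf{Resummation, errors, and main obstacle.} To upgrade the linearization $|\varphi|^2(1 + it\xi\Phi)$ to the claimed $|\varphi|^2 e^{it\xi\Phi}$, one iterates the analysis at every even order: at order $\varepsilon^{2k}$, the \emph{chain} Wick contractions, in which the line at $\xi$ threads $k$ quadratic vertices each contracting internally as at order $2$, produce $(it\xi\Phi(\xi))^k/k! \cdot |\varphi(\xi)|^2$, whose sum over $k$ is the exponential. Equivalently, one rephases $b_\varepsilon(s,\xi) := e^{-is\varepsilon^2\xi\Phi(\xi)}a_\varepsilon(s,\xi)$ and shows $\E(\overline{\hat\varphi_L(\xi)}\,b_\varepsilon(t\varepsilon^{-2},\xi)) = |\varphi(\xi)|^2 + \text{small errors}$. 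Remainders are controlled by: an a priori uniform bound on $\|a_\varepsilon\|$ (supplied by the invariance of $\nu_L$ when $\varphi=\an{\cdot}^{-1}$, or by a Gronwall argument producing the $e^{C(1+|t|)}$ factor); Riemann-sum corrections of size $O(1/L)$ per order; and Riemann--Lebesgue-type vanishing of non-resonant oscillatory integrals as $\varepsilon\to 0$, furnishing $f_{L,t}(\varepsilon)\to 0$. The non-algebraicity hypothesis on $L^2$ is used precisely to keep $\Omega$ uniformly bounded away from zero on $\tfrac{1}{L}\Z^*$ at every order of the expansion. The main obstacle is exactly this resummation: one must isolate the chain contributions at each order $2k$, prove they factor cleanly as $(it\xi\Phi)^k/k!$, and bound the remaining $(2k+1)!!$ Wick pairings and their $k$-fold oscillatory integrals uniformly in $k$, balancing the secular growth $(t\varepsilon^{-2})^k$ against the prefactor $\varepsilon^{2k}$ so that the residual series converges within the stated error.
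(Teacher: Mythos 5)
The corollary in the paper is deduced directly from Theorem~\ref{th:main}: one applies the estimate at $(L_n,\xi_n)$, lets $\varepsilon\to 0$ to kill the $f_{L_n,t}(\varepsilon)|\varphi(\xi_n)|$ term, then lets $n\to\infty$ to kill the $e^{C(1+|t|)}/L_n$ term (uniformly since $\varphi$ is bounded), and finishes by continuity of $\xi\mapsto e^{it\xi\Phi(\xi)}|\varphi(\xi)|^2$ together with the bounded-derivative assumption on $\varphi$. That is the entire proof. Your proposal does not invoke the theorem at all; it instead attempts to re-derive the leading-order phase from scratch. The pieces you do compute are correct — the factorization $\Omega(\xi,\eta,\xi-\eta)=-\omega(\xi)\omega(\eta)\omega(\xi-\eta)(3+\xi^2+\eta^2-\xi\eta)$, the vanishing of the order-$\varepsilon$ term by parity of Gaussian moments, the fact that one Wick pairing is excluded by the zero-frequency constraint, the compensating pair of phases $e^{\pm it_j\Omega}$ that yields $e^{i(t_1-t_2)\Omega}$, and the $\eta\leftrightarrow-\eta$ symmetrization producing $\tilde F$ — all match the paper.

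However, there is a genuine gap, and you name it yourself in the last paragraph: you do not actually carry out the resummation, nor bound the non-chain Wick pairings, nor control their $k$-fold oscillatory integrals uniformly in $k$; you list this as the ``main obstacle.'' That obstacle is precisely the content of Sections~\ref{sec:trees}--\ref{sec:proof}: the tree/comb combinatorics (Propositions~\ref{prop:fstcancel}, \ref{prop:rewriteK}, \ref{prop:rewriteK2}) showing that the pairwise cancellations at each order eliminate all contributions except the combs, whose sum is exactly $U(a,\xi)^{n}$, plus the propagation of remainder estimates (Proposition~\ref{prop:psiandtildepsiPropagated}) via measure invariance. A second gap is upstream: the naive Duhamel iteration you set up in the interaction picture does not converge on the time scale $t\varepsilon^{-2}$; one only gets $O(\varepsilon^{-1})$ without the normal-forms argument of Section~\ref{sec:NF}, which is what makes the expansion into $\psi_n$ summable up to $T_0(L,D)\varepsilon^{-2}$ (Corollary~\ref{cor:LWP}). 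So the proposal correctly identifies the target formula and the mechanism that produces the phase, but it does not constitute a proof of the corollary: the shortest correct route is to cite Theorem~\ref{th:main} and pass to the limits, and the longer route you sketch reproduces the paper's hardest analysis without completing it.
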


\begin{remark} The reason why we do not allow $L^2$ to be algebraic of degree less than $2$ is to avoid non trivial resonances of degree $2$. The proof is relatively robust in the sense that it could apply to many models displaying the following properties : no resonances of order $1$ and only trivial resonances of order $2$. We believe that the result would probably still hold with non-trivial resonances of order $2$, but the result would be implied by more involved probabilistic cancellations. 

Opposite to standard results on discrete wave turbulence, because we have no resonances of order $1$, they do not play an important role in the description of the dynamics. This is why we move to second order resonances.
\end{remark}

\begin{remark} We compare our result with the one in \cite{LukSpo}. The nature of the equations are very different because in \cite{LukSpo}, the equations are discrete and cubic while we have a quadratic and continuous equation. The main difference comes however from the fact that Lukarinnen and Spohn consider the kinetic regime (first $L\rightarrow \infty$, then $\varepsilon\rightarrow 0$). We do the opposite, which is referred to as discrete wave turbulence. This is why Lukarinnen and Spohn observe some mixing effect and we observe phase oscillations.

What is more, the $\liminf$ in $\Lambda$ in \cite{LukSpo} is comparable to the fact that we avoid certain sizes of the torus.

Finally, our result is global in time, whereas the result in \cite{LukSpo} is in finite time.
\end{remark}

The strategy of the proof is the following. First, we adapt normal forms to suit our purpose and get local well-posedness up to times of order $\varepsilon^{-2}$ (but badly behaving in $L$), see Section \ref{sec:NF}. Then, we describe the solution thanks to Feynmann diagrams, here binary trees with an order on nodes that respect the parenthood order, see Subsection \ref{subsec:FD}. Then, we describe the leading order term in each tree and estimate the remainder, see Subsection \ref{subsec:estimates}. In particular, we use the invariance of the measure or of the $H^1$ norm under the flow to propagate the estimate of the remainder up to any time in Subsection \ref{subsec:PropagEst}.  We exhibit a first cancellation between the trees and realise that there are only a few trees participating to the leading order in Subsection \ref{subsec:fstcancel} and more precisely in Proposition \ref{prop:fstcancel}. The key argument in this paper is that because the order two resonances are trivial, we can exhibit a second cancellation between trees, which is that only combs participate to the leading order, see Subsection \ref{subsec:combs}, and more precisely Proposition \ref{prop:rewriteK2} and Remark \ref{rm:combs}. Finally, we estimate the measure of the set of initial data we consider in order to propagate our estimates on the remainder and conclude.

The paper is organised as follows : in Section \ref{sec:NF}, we perform a normal form argument to expand the solution as Dyson series up to times of order $\varepsilon^{-2}$; in Section \ref{sec:trees}, we expand the solution into trees and identify the leading order in our analysis; in Section \ref{sec:EstRemInvMea} we estimate the set of initial data that are appropriate to our problem, and in Section \ref{sec:proof}, we give the final lines of the proof of the result.

\subsection{Notations}

\begin{notation}[Fourier coefficients] We take the following convention : for $u\in L^2(L\T)$, and $\xi \in \frac1{L} \Z^*$, we set
\[
\hat u(\xi) = \frac1{\sqrt{2\pi L}} \int_{L\T} u(x) e^{-ix\xi} dx.
\]
We often write the Fourier transform directly on flows or on operators. This should be understood as describing flows or operators in Fourier mode. 
\end{notation}

\begin{notation}[Sobolev spaces] Let $s\geq 0$, and $L>0$, we say that $u\in L^2(L\T)$ belongs to $H^s(L\T)$ if $\hat u(0) = 0$ and 
\[
\|u\|_{H^s(L\T)}^2 := \sum_{\xi \in \frac1{L}\Z^*} \an{\xi}^{2s} |\hat u(\xi)|^2 < \infty .
\]
In the following, we often omit the dependence in $L$ of the Sobolev space $H^s(L\T)$ and write $H^s = H^s(L\T)$.
\end{notation}

\begin{notation} For any $x,y \in \R$, we set $[|x,y|] = [x,y]\cap \Z$.
\end{notation}

Constants may depend on the regularity $s$ at which we work and on the function $\varphi$ but not on any other data of the problem unless it is specified.

\subsection{Acknowledgements}

The author is supported by ANR grant ESSED ANR-18-CE40-0028.

\section{Normal forms}\label{sec:NF}

We consider the Cauchy problem
\begin{equation}\label{cauchyproblem} 
\left \lbrace{\begin{array}{cc}
\partial_t u_{\varepsilon,L} + W u_{\varepsilon,L} +  W (u^2_{\varepsilon,L}) = 0 \\
u_{\varepsilon,L}(t=0) = a, & \in H^s(L\T), \quad s>\frac14.
\end{array}} \right.
\end{equation}
We prove that it is locally well-posed up to times of order $\varepsilon^{-2}$ and that the flow associated can be written in the form of a convergent series using normal forms. 

We remark that the quantity
\[
\int_{L\T} u_{\varepsilon,L}(t,x) dx
\]
does not depend on time. We assume implicitely in all the rest of the paper that $\int_{L\T} a(x) dx = 0$, that is $\hat a(0) =0$ and thus that $\hat u_{\varepsilon,L}(t) (0)=0$. Hence we consider only non zero frequencies.

We set $u_n(t) = \Psi_{\varepsilon,L,n}(t)(a)$ the following sequence. Writing $S(t)$ the semi-group $S(t) = e^{-tW}$, we set
\[
u_0 = S(t) a,
\]
and for $n\in \N$,
\begin{equation}\label{defun}
u_{n+1} = \varepsilon \int_{0}^t  W S(t-\tau) \Big[\sum_{n_1+n_2 = n} u_{n_1}(\tau)u_{n_2}(\tau)\Big]d\tau.
\end{equation}
If the series converges, we have (formally) setting $u = \sum_n u_n$,
\[
u = S(t) a + \varepsilon \int_{0}^t WS(t-\tau) (u^2(\tau))d\tau
\]
and thus $u$ is indeed the solution to \eqref{cauchyproblem}.

\begin{definition} Let $s> \frac14$. We define the bilinear map
\[
N_\varepsilon : H^s\times H^s \rightarrow H^s
\]
as for $u,v \in H^s$ and in Fourier mode
\[
\hat N_\varepsilon (u,v)(\xi) = \frac1{\sqrt{2\pi L}}{\bf 1}_{|\xi|\leq \varepsilon^{-\alpha(s)}} \sum_{\xi_1+\xi_2 = \xi} \frac{\omega(\xi)}{\Delta^{\xi}_{\xi_1,\xi_2}}\hat u(\xi_1) \hat v(\xi_2)
\]
where
\[
\Delta^\xi_{\xi_1,\xi_2} = \omega(\xi) - \omega(\xi_1) - \omega(\xi_2) \neq 0
\]
and where
\[
\alpha(s) = \left \lbrace{\begin{array}{cc}
\frac1{1-2s} & \textrm{ if } s\leq \frac13 \\
3 & \textrm{ otherwise.} 
\end{array}} \right.
\]
\end{definition}

\begin{proposition} For all $\xi_1,\xi_2,\xi \in \R^*$ such that $\xi_1 + \xi_2 = \xi$, we have 
\[
\Delta^\xi_{\xi_1,\xi_2} = -\omega(\xi)\omega(\xi_1)\omega(\xi_2) \Big(3 + \frac{\xi^2 + \xi_1^2 + \xi_2^2}{2} \Big).
\]
\end{proposition}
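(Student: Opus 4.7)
The identity is purely algebraic, as $\Delta^\xi_{\xi_1,\xi_2}$ is a specific rational expression in $\xi,\xi_1,\xi_2$ constrained by $\xi=\xi_1+\xi_2$. The natural strategy is direct computation, with the common denominator $(1+\xi^2)(1+\xi_1^2)(1+\xi_2^2)$ kept in mind so that the expected factor $-\omega(\xi)\omega(\xi_1)\omega(\xi_2)=-\xi\xi_1\xi_2/[(1+\xi^2)(1+\xi_1^2)(1+\xi_2^2)]$ emerges automatically at the end.

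The first step is to write
\[
\Delta^\xi_{\xi_1,\xi_2}=\frac{N}{(1+\xi^2)(1+\xi_1^2)(1+\xi_2^2)},\qquad N=\xi(1+\xi_1^2)(1+\xi_2^2)-\xi_1(1+\xi^2)(1+\xi_2^2)-\xi_2(1+\xi^2)(1+\xi_1^2),
\]
and then expand $N$ and group its monomials by total degree in $(\xi,\xi_1,\xi_2)$: a degree-$1$ piece, a degree-$3$ piece and a degree-$5$ piece.

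The second step uses only the relation $\xi=\xi_1+\xi_2$ to simplify each piece. The degree-$1$ part is exactly $\xi-\xi_1-\xi_2=0$. The degree-$3$ part rearranges, via $\xi\xi_1^2-\xi_2\xi_1^2=\xi_1^3$, $\xi\xi_2^2-\xi_1\xi_2^2=\xi_2^3$ and $-\xi_1\xi^2-\xi_2\xi^2=-\xi^3$, into $\xi_1^3+\xi_2^3-\xi^3=-3\xi\xi_1\xi_2$. The degree-$5$ part factors as $\xi\xi_1\xi_2\bigl(\xi_1\xi_2-\xi(\xi_1+\xi_2)\bigr)=\xi\xi_1\xi_2(\xi_1\xi_2-\xi^2)$, and expanding $\xi^2=(\xi_1+\xi_2)^2$ turns this into $-\xi\xi_1\xi_2(\xi_1^2+\xi_1\xi_2+\xi_2^2)$. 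Adding the three contributions gives
\[
N=-\xi\xi_1\xi_2\bigl(3+\xi_1^2+\xi_1\xi_2+\xi_2^2\bigr).
\]

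The final step is the elementary identity $\xi_1^2+\xi_1\xi_2+\xi_2^2=\tfrac12\bigl((\xi_1+\xi_2)^2+\xi_1^2+\xi_2^2\bigr)=\tfrac12(\xi^2+\xi_1^2+\xi_2^2)$, which rewrites the parenthesis in the claimed form. Dividing by $(1+\xi^2)(1+\xi_1^2)(1+\xi_2^2)$ reconstitutes $-\omega(\xi)\omega(\xi_1)\omega(\xi_2)$ and yields the stated factorization. There is no genuine obstacle here; the only point requiring care is organising the expansion by total degree so that the cancellations are transparent rather than buried in a brute-force expansion.
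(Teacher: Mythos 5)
Your proof is correct, and it is exactly the ``straightforward computation'' the paper invokes without spelling out: clear the denominators, organize the numerator by total degree, use $\xi=\xi_1+\xi_2$ to collapse each piece, and recognize $\xi_1^2+\xi_1\xi_2+\xi_2^2=\tfrac12(\xi^2+\xi_1^2+\xi_2^2)$. Same approach, merely written out in full.
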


\begin{proof} Straightforward computation.
\end{proof}

\begin{proposition}\label{prop:estNeps} Let $s>\frac14$. There exists $C$ such that for all $L\geq 1$ and all $\varepsilon < 1$, for all $u,v\in H^s(L\T)$,
\[
\|N_\varepsilon(u,v)\|_{H^s}\leq C L^{2} \varepsilon^{-1/2} \|u\|_{H^s}\|v\|_{H^s}.
\]
\end{proposition}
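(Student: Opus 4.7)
The plan is to apply Cauchy-Schwarz to the convolution sum defining $\hat N_\varepsilon(u,v)$ and reduce to a bilinear kernel estimate, which I would then control by a case analysis on the frequency regions.

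First I extract the pointwise size of the multiplier. Writing $m(\xi,\xi_1,\xi_2):=\omega(\xi)/\Delta^\xi_{\xi_1,\xi_2}$, the formula of the preceding proposition gives $|m| = \langle\xi_1\rangle^2\langle\xi_2\rangle^2 / \bigl(|\xi_1\xi_2|(3+(\xi^2+\xi_1^2+\xi_2^2)/2)\bigr)$. Dropping $\xi^2$ and using AM-GM yields $3+(\xi^2+\xi_1^2+\xi_2^2)/2 \geq (\langle\xi_1\rangle^2+\langle\xi_2\rangle^2)/2 \geq \langle\xi_1\rangle\langle\xi_2\rangle$, hence the pointwise estimate $|m|^2\leq \langle\xi_1\rangle^2\langle\xi_2\rangle^2/(\xi_1^2\xi_2^2)$. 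Applying Cauchy-Schwarz in the $\xi_1$-sum with the weight $\langle\xi_1\rangle^{2s}\langle\xi_2\rangle^{2s}$ and then summing against $\langle\xi\rangle^{2s}$ in $\xi$ reduces the claim to
\[
K \;:=\; \sup_{\substack{\xi\in\frac{1}{L}\Z^*\\|\xi|\leq\varepsilon^{-\alpha(s)}}}\langle\xi\rangle^{2s}\!\!\!\sum_{\substack{\xi_1\in\frac{1}{L}\Z^*\\\xi_2:=\xi-\xi_1\neq 0}}\!\!\!\frac{\langle\xi_1\rangle^{2-2s}\langle\xi_2\rangle^{2-2s}}{\xi_1^2\,\xi_2^2} \;\leq\; CL^5\varepsilon^{-1},
\]
which combined with the $\frac{1}{2\pi L}$ prefactor from Cauchy-Schwarz gives $\|N_\varepsilon(u,v)\|_{H^s}^2 \leq CL^4\varepsilon^{-1}\|u\|_{H^s}^2\|v\|_{H^s}^2$.

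To estimate $K$, I would split the $\xi_1$-sum into three regions depending on whether $|\xi_1|$ and $|\xi_2|$ are below or above $1$. Writing $\xi_i=k_i/L$ with $k_i\in\Z^*$, the factor $1/\xi_i^2 = L^2/k_i^2$ is what drives the $L$-growth at low frequencies, and summability of $1/k^2$ keeps each contribution finite. When both $|\xi_i|\leq 1$ (forcing $|\xi|\leq 2$), the weights $\langle\xi_i\rangle$ are bounded and the contribution is $L^4\sum_{k_1}\bigl(k_1(k-k_1)\bigr)^{-2}=O(L^4)$. When exactly one of $|\xi_i|\leq 1$, summing $L^2/k_1^2$ against $|\xi_2|^{-2s}\approx\langle\xi\rangle^{-2s}$ gives $O(L^2\langle\xi\rangle^{-2s})$, which becomes $O(L^2)$ after multiplication by $\langle\xi\rangle^{2s}$. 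When both $|\xi_i|>1$, scaling $\xi_1=\xi t$ in the corresponding integral shows the resulting sum is of order $L\langle\xi\rangle^{1-4s}$, convergent precisely because $s>1/4$; multiplying by $\langle\xi\rangle^{2s}$ yields $L\langle\xi\rangle^{1-2s}$, and the cutoff $|\xi|\leq\varepsilon^{-\alpha(s)}$ together with the identity $\alpha(s)(1-2s)\leq 1$ (equality for $s\leq 1/3$, strict for $s>1/3$) then bounds this by $L/\varepsilon$.

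Summing the three contributions yields $K\leq C(L^4+L^2+L/\varepsilon)\leq CL^5/\varepsilon$ using $L\geq 1$ and $\varepsilon<1$, which is exactly what was needed. The main technical obstacle is the third regime: the piecewise definition of $\alpha(s)$ in the statement is precisely calibrated so that the high-frequency tail integrates to exactly $\varepsilon^{-1}$ rather than something worse, producing the $\varepsilon^{-1/2}$ factor in the final bound.
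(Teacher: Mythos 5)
Your argument is correct, and it takes a genuinely different route from the paper's. The paper also starts from the same algebraic inputs (the factorisation of $\Delta^\xi_{\xi_1,\xi_2}$ and the lower bound $|\omega(\xi_j)|\geq \an{\xi_j}^{-1}L^{-1}$), but instead of running a Schur-type kernel test it splits the Sobolev weight as $\an{\xi}^s\lesssim\an{\xi}^{s'}\an{\xi_1}^{s-s'}\an{\xi_2}^{s-s'}$ with $s'=\min(s,1)$, balances the exponents against the factor $L^2\an{\xi_1}\an{\xi_2}/(3+\tfrac12(\xi^2+\xi_1^2+\xi_2^2))$ so that the whole multiplier collapses to $L^2\an{\xi}^{-s'}$, applies a single Cauchy--Schwarz in $\xi_1$ to pull out $\|u\|_{H^s}\|v\|_{H^s}$, and finally sums $\an{\xi}^{-2s'}{\bf 1}_{|\xi|\leq\varepsilon^{-\alpha(s)}}$ over the cutoff to produce $\varepsilon^{-1/2}$. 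You instead reduce to the sup
\[
K=\sup_{|\xi|\leq\varepsilon^{-\alpha(s)}}\an{\xi}^{2s}\sum_{\xi_1+\xi_2=\xi}\frac{\an{\xi_1}^{2-2s}\an{\xi_2}^{2-2s}}{\xi_1^2\xi_2^2}
\]
and then split by frequency regime: low/low gives the $L^4$ (the dominant source of $L$-growth, from the $\xi_j^{-2}=L^2k_j^{-2}$ factors near zero), low/high gives $L^2$, and high/high gives the $\varepsilon^{-1}$ via $\an{\xi}^{1-2s}$ and the calibration $\alpha(s)(1-2s)\leq 1$. Both are valid and use the same multiplier bound; the paper's exponent-balancing is slicker and single-shot, yours is more elementary and makes fully transparent where each factor of $L$ and of $\varepsilon^{-1}$ comes from (and, incidentally, your decomposition actually gives the sharper $K\lesssim L^4+L\varepsilon^{-1}$, hence $\|N_\varepsilon(u,v)\|_{H^s}\lesssim(L^{3/2}+\varepsilon^{-1/2})\|u\|_{H^s}\|v\|_{H^s}$, a slightly better bound than stated, though of course the weaker $L^2\varepsilon^{-1/2}$ suffices for the proposition).
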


\begin{proof}
We have for $\xi \in \frac1{L}\Z^*$,
\[
\hat N_\varepsilon (u,v)(\xi) = \frac1{\sqrt{2\pi L}}{\bf 1}_{|\xi|\leq \varepsilon^{-\alpha(s)}} \sum_{\xi_1+\xi_2 = \xi} \frac{\omega(\xi)}{\Delta^{\xi}_{\xi_1,\xi_2}}\hat u(\xi_1) \hat v(\xi_2).
\]
Set $s' = \min(1,s)$. Since we have for all $\xi_1,\xi_2$,
\[
\an{\xi}^{s-s'} \lesssim_s \an{\xi_1}^{s-s'} + \an{\xi_2}^{s-s'},
\]
we deduce 
\[
\an{\xi}^s \lesssim_s \an{\xi}^{s'}\an{\xi_1}^{-s'}\an{\xi_2}^{-s'} \an{\xi_1}^s\an{\xi_2}^s.
\] 
By triangle inequality, we deduce
\[
\an{\xi}^s |\hat N_\varepsilon(u,v)(\xi)| \leq \frac1{\sqrt{2\pi L}} {\bf 1}_{|\xi|\leq \varepsilon^{-\alpha(s)}}  \sum_{\xi_1+\xi_2 = \xi} \an{\xi}^{s'}\an{\xi_1}^{-s'}\an{\xi_2}^{-s'}\frac{|\omega(\xi)|}{|\Delta^{\xi}_{\xi_1,\xi_2}|}\an{\xi_1}^s|\hat u(\xi_1)| \an{\xi_2}^s |\hat v(\xi_2)| .
\]
We have 
\[
\frac{|\omega(\xi)|}{|\Delta^{\xi}_{\xi_1,\xi_2}|} = \frac1{|\omega(\xi_1)\omega(\xi_2)| \Big(3 + \frac{\xi^2 + \xi_1^2 + \xi_2^2}{2} \Big)}
\]
and, since $|\xi_1|,|\xi_2|\geq \frac1{L}$,
\[
|\omega(\xi_j)| \geq  \frac1{\an{\xi_j}L}
\]
and thus
\[
 \frac{|\omega(\xi)|}{|\Delta^{\xi}_{\xi_1,\xi_2}|} \leq L^2\frac{\an{\xi_1} \an{\xi_2}}{\Big(3 + \frac{\xi^2 + \xi_1^2 + \xi_2^2}{2} \Big) }.
\]
We deduce
\[
\an{\xi}^{s'}\an{\xi_1}^{-s'}\an{\xi_2}^{-s'}\frac{|\omega(\xi)|}{|\Delta^{\xi}_{\xi_1,\xi_2}|} \leq L^2 \frac{\an{\xi_1}^{1-s'}\an{\xi}^{s'} \an{\xi_2}^{1-s'}}{\Big(3 + \frac{\xi^2 + \xi_1^2 + \xi_2^2}{2} \Big)^{1/2} }.
\]
Because $s'+ 1-s' + 1-s' = 2 - s'$, we deduce that
\[
\an{\xi}^{s'}\an{\xi_1}^{-s'}\an{\xi_2}^{-s'}\frac{|\omega(\xi)|}{|\Delta^{\xi}_{\xi_1,\xi_2}|} \leq CL^2 \an{\xi}^{-s'}.
\]
Therefore, by Cauchy-Schwarz, we have 
\[
\an{\xi}^s |\hat N_\varepsilon(u,v)(\xi)| \leq CL^{3/2} \an{\xi}^{-s'} {\bf 1}_{|\xi|\leq \varepsilon^{-\alpha(s)}} \|u\|_{H^s(L\T)} \|v\|_{H^s(L\T)}.
\]
Since we have
\[
\sum_{\xi \in \frac1{L}\Z}\an{\xi}^{-2s'} {\bf 1}_{|\xi|\leq \varepsilon^{-\alpha(s)}} \leq C_s L \int_1^{\varepsilon^{-\alpha(s)}} |x|^{-2s'}dx \leq C_s \left \lbrace{\begin{array}{cc}
L\varepsilon^{-\alpha(s)(1/2-s)} & \textrm{ if } s\leq \frac12,\\
-\alpha(s) L \ln \varepsilon & \textrm{ if } s=\frac12,\\
L & \textrm{ otherwise.}\end{array}}\right.
\]
We deduce 
\[
\|N_\varepsilon (u,v)\|_{H^s(L\T)} \leq C_s \left \lbrace{\begin{array}{cc}
L\varepsilon^{-\alpha(s)(1/2-s)} & \textrm{ if } s< \frac12,\\
-3L \ln \varepsilon & \textrm{ if } s=\frac12,\\ 
L & \textrm{ otherwise.}\end{array}}\right.
\]
If $s\leq \frac13$, we have $\alpha(s)(1/2 - s) = 1/2$, if $s\in [1/3,\frac12)$, we have $\alpha(s)(\frac12 - s)=3(1/2-s) \leq 1/2$, which concludes the proof.
\end{proof}

\begin{notation} We set $v_0 = u_0$ and for $n\in \N$,
\[
v_{n+1} = u_{n+1} + \varepsilon \sum_{n_1+n_2 = n} N_\varepsilon(u_{n_1},u_{n_2}).
\]
\end{notation}

\begin{proposition}\label{prop:eqv} Assume $a$ is a trigonometric polynomial. Then all the $u_n$ are smooth functions of time with values in trigonometric polynomials. So are the $v_n$. We deduce that $v_n$ solves the equation.
\[
\partial_t v_n + W v_n = \varepsilon \sum_{n_1+n_2 = n-1}Q_\varepsilon(u_{n_1},u_{n_2}) + \varepsilon^2 \sum_{n_1+n_2+n_3 = n-2} P_\varepsilon (u_{n_1},u_{n_2},u_{n_3})
\]
where $Q_\varepsilon$ and $P_\varepsilon$ are respectively bilinear and trilinear maps defined in Fourier mode by
\[
\hat Q_\varepsilon (u,v)(\xi) = -i \frac{\omega(\xi)}{\sqrt{2\pi L}}{\bf 1}_{|\xi|> \varepsilon^{-\alpha(s)}} \sum_{\xi_1+\xi_2 = \xi} \hat u(\xi_1)\hat u(\xi_2)
\]
and
\[
\hat P_\varepsilon(u,v,w) (\xi) = \frac{-i}{\pi L}  {\bf 1}_{|\xi|\leq \varepsilon^{-\alpha(s)}} \sum_{\xi_1+\xi_2+\xi_3} \frac{\omega(\xi) \omega(\xi_2+\xi_3)}{\Delta^\xi_{\xi_1,\xi_2+\xi_3}} \hat u(\xi_1)\hat v(\xi_2) \hat w(\xi_3).
\] 

\end{proposition}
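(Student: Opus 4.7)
The plan is to prove this identity by direct computation on the Fourier side, after an easy induction establishing smoothness. Because $a$ is a trigonometric polynomial and both $W$ and $S(t)$ act as Fourier multipliers preserving trigonometric polynomials, a routine induction on $n$ gives that each $u_n$ is $C^\infty$ in time with values in trigonometric polynomials; the same holds for $v_n$, which differs from $u_n$ by a Fourier-truncated bilinear expression. This justifies all pointwise manipulations in frequency that follow, with no convergence issues.

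Next, I would compute $(\partial_t + W)\hat v_n(\xi)$ from its two pieces. The Duhamel definition of $u_n$ directly gives
\[
(\partial_t + W)\hat u_n(\xi) = -i\varepsilon\omega(\xi)\sum_{n_1+n_2=n-1}\widehat{u_{n_1}u_{n_2}}(\xi).
\]
For the normal-form correction, I would apply the product rule inside
\[
\hat N_\varepsilon(u_{n_1},u_{n_2})(\xi) = \frac{\omega(\xi)\mathbf{1}_{|\xi|\leq\varepsilon^{-\alpha(s)}}}{\sqrt{2\pi L}}\sum_{\xi_1+\xi_2=\xi}\frac{\hat u_{n_1}(\xi_1)\hat u_{n_2}(\xi_2)}{\Delta^\xi_{\xi_1,\xi_2}}
\]
and write $\partial_t \hat u_{n_j}(\xi_j) = -i\omega(\xi_j)\hat u_{n_j}(\xi_j) + \hat g_{n_j}(\xi_j)$, where $\hat g_{n_j} = -i\varepsilon\omega\sum_{m_1+m_2=n_j-1}\widehat{u_{m_1}u_{m_2}}$ is the quadratic Duhamel source (with $\hat g_0 \equiv 0$). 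The strictly linear part then produces the coefficient $i(\omega(\xi) - \omega(\xi_1) - \omega(\xi_2)) = i\Delta^\xi_{\xi_1,\xi_2}$, which cancels the denominator of $\hat N_\varepsilon$ and yields exactly $i\omega(\xi)\mathbf{1}_{|\xi|\leq\varepsilon^{-\alpha(s)}}\widehat{u_{n_1}u_{n_2}}(\xi)$. Multiplied by $\varepsilon$, this precisely cancels the low-frequency portion of $(\partial_t + W)\hat u_n(\xi)$, leaving $-i\varepsilon\omega(\xi)\mathbf{1}_{|\xi|>\varepsilon^{-\alpha(s)}}\sum\widehat{u_{n_1}u_{n_2}}(\xi) = \varepsilon \sum_{n_1+n_2=n-1}\hat Q_\varepsilon(u_{n_1},u_{n_2})(\xi)$, which is the first term on the right-hand side.

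Finally, I would identify the source contributions coming from $\hat g_{n_j}$ with $\varepsilon^2 \sum_{n_1+n_2+n_3=n-2}\hat P_\varepsilon(u_{n_1},u_{n_2},u_{n_3})(\xi)$. Substituting $\hat g_{n_j}$ and unfolding the resulting convolution as a sum over triples $\xi_1'+\xi_2'+\xi_3'=\xi$, the contribution from $\partial_t \hat u_{n_1}(\xi_1)\hat u_{n_2}(\xi_2)$ carries the weight $\omega(\xi_1'+\xi_2')/\Delta^\xi_{\xi_1'+\xi_2',\xi_3'}$, while the one from $\hat u_{n_1}(\xi_1)\partial_t \hat u_{n_2}(\xi_2)$ carries $\omega(\xi_2'+\xi_3')/\Delta^\xi_{\xi_1',\xi_2'+\xi_3'}$. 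Since $\Delta^\xi$ is symmetric in its two lower indices and the triple sum over $n_1+n_2+n_3 = n-2$ is symmetric in its factors, relabeling $\xi_1' \leftrightarrow \xi_3'$ identifies these two contributions and produces a symmetry factor of $2$, which together with the two Fourier normalization constants $1/\sqrt{2\pi L}$ accounts for the prefactor $-i/(\pi L) = -2i/(2\pi L)$ appearing in $\hat P_\varepsilon$. The main obstacle in the argument is precisely this bookkeeping step of relabeling indices and matching constants; the algebraic cancellation producing $Q_\varepsilon$ is by contrast immediate, and is the very reason why $N_\varepsilon$ was defined with the resonance denominator $\Delta^\xi_{\xi_1,\xi_2}$.
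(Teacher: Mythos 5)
Your proposal is correct and follows essentially the same route as the paper: differentiate $v_n$ in Fourier, use $\Delta^\xi_{\xi_1,\xi_2}=\omega(\xi)-\omega(\xi_1)-\omega(\xi_2)$ to cancel the resonance denominator of $\hat N_\varepsilon$ against the linear flow, and unfold the quadratic Duhamel source inside $N_\varepsilon$ as a convolution over triples to recover $P_\varepsilon$ with the correct $\frac{-i}{\pi L}$ prefactor. The only cosmetic difference is bookkeeping: the paper first symmetrizes $2\varepsilon\sum N_\varepsilon(u_{n_1},\partial_t u_{n_2})$ and then rewrites the linear piece as $\varepsilon\sum\bigl(N_\varepsilon(u_{n_1},Wu_{n_2})+N_\varepsilon(Wu_{n_1},u_{n_2})\bigr)$ so that the identity $\frac{i(\omega(\xi_1)+\omega(\xi_2))}{\Delta^\xi_{\xi_1,\xi_2}}=-i+\frac{i\omega(\xi)}{\Delta^\xi_{\xi_1,\xi_2}}$ appears explicitly, while you apply the product rule directly to both factors and collect the coefficient $i\Delta^\xi_{\xi_1,\xi_2}$ from $(\partial_t+W)$; these are the same algebraic cancellation expressed two ways, and your constant/index accounting for $P_\varepsilon$ checks out.
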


\begin{proof}
Because $v_n$ is a smooth function of time with values in trigonometric polynomials, we can derive it explicitely :
\[
\partial_t v_n = \partial_t u_n + \varepsilon \sum_{n_1+n_2= n-1} \partial_t N_\varepsilon(u_{n_1},u_{n_2}).
\]
Since $N_\varepsilon$ is bilinear, and the sum is symmetric in $n_1,n_2$ we get
\[
\partial_t v_n  = \partial_t u_n + 2\varepsilon \sum_{n_1+n_2 = n-1} N_\varepsilon(u_{n_1},\partial_t u_{n_2}). 
\]
We now recall that $u_n$ satisfies \eqref{defun} and get
\begin{multline*}
\partial_t v_n  = -W u_{n} -  W \varepsilon \sum_{n_1+n_2 = n-1} u_{n_1}u_{n_2} \\
- 2\varepsilon \sum_{n_1+n_2 = n-1} N_\varepsilon( u_{n_1}, W u_{n_2})) - 2\varepsilon^2  \sum_{n_1+n_2+n_3 = n-2}N_\varepsilon (u_{n_1}, W (u_{n_2}u_{n_3})) . 
\end{multline*}

We set 
\[
P_\varepsilon (u,v,w) = -2 N_\varepsilon(u,  W(vw)).
\]
Let us verify that it corresponds to the definition in Fourier mode of the proposition. For $\xi \in \frac1{L}\Z$, we have 
\[
\hat P_\varepsilon (u,v,w)(\xi) = -2 \hat N_\varepsilon (u, W(vw)) (\xi).
\]
Using the definition of $N_\varepsilon$ we get
\[
\hat P_\varepsilon (u,v,w)(\xi) = - \frac2{\sqrt{2\pi L}}{\bf 1}_{|\xi|\leq \varepsilon^{-\alpha(s)}} \sum_{\xi_1+\xi_2 = \xi} \frac{\omega(\xi)}{\Delta^{\xi}_{\xi_1,\xi_2}}\hat u(\xi_1)  i\omega(\xi_2) \widehat {vw}(\xi_2).
\]
We use that
\[
\widehat{vw}(\xi_2) = \frac1{\sqrt{2\pi L}}\sum_{\xi_2' + \xi_3 = \xi_2} \hat v(\xi_2')\hat w(\xi_3)
\]
to get
\[
\hat P_\varepsilon (u,v,w)(\xi) = - i \frac1{\pi L} {\bf 1}_{|\xi|\leq \varepsilon^{-\alpha(s)}} \sum_{\xi_1 + \xi_2 + \xi_3 = \xi}  \frac{\omega(\xi)\omega(\xi_2 + \xi_3)}{\Delta^\xi_{\xi_1,\xi_2 + \xi_3}} \hat u(\xi_1) \hat v(\xi_2) \hat w (\xi_3).
\]

We now compute 
\[
I = 2\varepsilon \sum_{n_1+n_2 = n-1} N_\varepsilon( u_{n_1}, W u_{n_2}).
\]
By symmetry in the sum in $n_1,n_2$, we have
\[
I = \varepsilon \sum_{n_1+n_2 = n-1} (N_\varepsilon( u_{n_1}, W u_{n_2}) + N_\varepsilon (Wu_{n_1},u_{n_2})).
\]
We use the definition of $N_\varepsilon$ to get, in Fourier mode, for $\xi \in \frac1{L}\Z$,
\[
\hat I(\xi) = \varepsilon \omega(\xi) \frac1{\sqrt{2\pi L}}{\bf 1}_{|\xi|\leq \varepsilon^{-\alpha(s)}} \sum_{n_1+ n_2 = n-1} \sum_{\xi_1 + \xi_2 = \xi} \frac{i\omega(\xi_1) + i\omega(\xi_2) }{\Delta^\xi_{\xi_1,\xi_2}}  \hat u_{n_1}(\xi_1) \hat u_{n_2}(\xi_2).
\]
Using the definition of $\Delta^\xi_{\xi_1,\xi_2}$, we get that
\[
\frac{i\omega(\xi_1) + i\omega(\xi_2) }{\Delta^\xi_{\xi_1,\xi_2}}  = -i + i\frac{\omega(\xi)}{\Delta^\xi_{\xi_1,\xi_2}}.
\]
We deduce
\begin{multline*}
\hat I(\xi) = -i \varepsilon \omega(\xi) \frac1{\sqrt{2\pi L}}{\bf 1}_{|\xi|\leq \varepsilon^{-\alpha(s)}} \sum_{n_1+ n_2 = n-1} \sum_{\xi_1 + \xi_2 = \xi}  \hat u_{n_1}(\xi_1) \hat u_{n_2}(\xi_2) \\
+ i \omega(\xi) \frac1{\sqrt{2\pi L}}{\bf 1}_{|\xi|\leq \varepsilon^{-\alpha(s)}} \sum_{n_1+ n_2 = n-1} \sum_{\xi_1 + \xi_2 = \xi} \frac{\omega(\xi)}{\Delta^\xi_{\xi_1,\xi_2}}\hat u_{n_1}(\xi_1) \hat u_{n_2}(\xi_2) .
\end{multline*}
We recognize
\[
I  = -\varepsilon W  \chi_\varepsilon \Big(\sum_{n_1+n_2 = n-1 }u_{n_1}u_{n_2}\Big) + \varepsilon W \sum_{n_1+n_2 = n-1 }N_\varepsilon (u_{n_1},u_{n_2})
\]
where $\chi_\varepsilon$ is the orthogonal projection over Fourier modes lower than $\varepsilon^{-\alpha(s)}$.

We deduce that
\begin{multline*}
\partial_t v_n = -W u_n - W \varepsilon\sum_{n_1+ n_2 = n-1} N_\varepsilon(u_{n_1},u_{n_1}) - \varepsilon (1-\chi_{\varepsilon}) W \sum_{n_1+n_2 =n-1} u_{n_1}u_{n_2}\\
 + \varepsilon^2 \sum_{n_1+n_2+n_3 = n-2}P_\varepsilon (u_{n_1},u_{n_2},u_{n_3}).
\end{multline*}
We have that
\[
Q_\varepsilon (u,v) = - (1- \chi_\varepsilon ) W (uv)
\]
which concludes the proof.
\end{proof}

\begin{proposition}\label{prop:estonQeps} Let $s >\frac14$. There exists $C_s$ such that for all $\varepsilon \in (0,1)$ and all $L\geq 1$, and for all $u,v \in H^s(L\T)$, we have 
\[
\|Q_\varepsilon(u,v)\|_{H^s} \leq C_s \varepsilon \|u\|_{H^s}\|v\|_{H^s}.
\]
\end{proposition}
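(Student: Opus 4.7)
The plan is to start from the observation, already used implicitly in the proof of Proposition~\ref{prop:eqv}, that $Q_\varepsilon(u,v)=-(1-\chi_\varepsilon)W(uv)$ where $\chi_\varepsilon$ is the orthogonal projection onto frequencies $|\xi|\le \varepsilon^{-\alpha(s)}$. Equivalently, $\widehat{Q_\varepsilon(u,v)}(\xi)=-i\omega(\xi)\mathbf{1}_{|\xi|>\varepsilon^{-\alpha(s)}}\widehat{uv}(\xi)$. Two ingredients will then yield the $\varepsilon$ gain: the bound $|\omega(\xi)|\le \an{\xi}^{-1}$, and the high-frequency cutoff $\an{\xi}>\varepsilon^{-\alpha(s)}$. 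I would split the argument into two regularity regimes.

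For $s>\tfrac12$, the space $H^s(L\T)$ is an algebra with constant uniform in $L\ge 1$ (the embedding $H^s\hookrightarrow L^\infty$ follows from Cauchy--Schwarz on the Fourier inversion formula, since $\sum_{\xi\in\frac1L\Z^*}\an{\xi}^{-2s}\le C_s L$ cancels the $1/\sqrt{L}$ prefactor). I would then estimate
\[
\|Q_\varepsilon(u,v)\|_{H^s}\le \sup_{|\xi|>\varepsilon^{-\alpha(s)}}|\omega(\xi)|\cdot\|uv\|_{H^s}\le \varepsilon^{\alpha(s)}C_s\|u\|_{H^s}\|v\|_{H^s}\le C_s\varepsilon\|u\|_{H^s}\|v\|_{H^s},
\]
using $\alpha(s)=3\ge 1$ and $\varepsilon<1$.

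For the main case $s\in(\tfrac14,\tfrac12]$, $H^s$ is no longer an algebra, so I would first establish $\|uv\|_{L^2(L\T)}\le C_s\|u\|_{H^s}\|v\|_{H^s}$ uniformly in $L\ge 1$. This follows from the discrete Young inequality $\ell^{4/3}\ast\ell^{4/3}\hookrightarrow\ell^2$ applied to $\widehat{uv}=(2\pi L)^{-1/2}\hat u\ast\hat v$, together with the H\"older bound $\|\hat u\|_{\ell^{4/3}}\le \bigl(\sum_{\xi\in\frac1L\Z^*}\an{\xi}^{-4s}\bigr)^{1/4}\|u\|_{H^s}\le C_s L^{1/4}\|u\|_{H^s}$, valid precisely because $4s>1$; the resulting $L^{1/2}$ factor is absorbed by the $(2\pi L)^{-1/2}$ normalization. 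With this in hand,
\[
\|Q_\varepsilon(u,v)\|_{H^s}^2\le \Bigl(\sup_{|\xi|>\varepsilon^{-\alpha(s)}}\an{\xi}^{2s}|\omega(\xi)|^2\Bigr)\|uv\|_{L^2}^2\le C_s\varepsilon^{2\alpha(s)(1-s)}\|u\|_{H^s}^2\|v\|_{H^s}^2,
\]
because $\an{\xi}^{2s}|\omega(\xi)|^2\le \an{\xi}^{2s-2}$ is decreasing in $|\xi|$ for $s\le 1$. A direct check gives $\alpha(s)(1-s)\ge 1$ in both sub-regimes: $(1-s)/(1-2s)\ge 1$ on $(\tfrac14,\tfrac13]$, and $3(1-s)\ge \tfrac32$ on $(\tfrac13,\tfrac12]$, delivering the claim.

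The main technical point is the uniform-in-$L$ bilinear bound $H^s\cdot H^s\hookrightarrow L^2$ for $s\in(\tfrac14,\tfrac12]$: it must be argued through the Fourier-side computation above rather than by quoting the Sobolev embedding $H^{1/4+}\hookrightarrow L^4$ on $L\T$, whose constant a priori depends on $L$. Beyond this, the calculation is just the elementary observation that the cutoff threshold $\alpha(s)$ in the definition of $Q_\varepsilon$ is precisely tuned so that both gains $\varepsilon^{\alpha(s)}$ and $\varepsilon^{\alpha(s)(1-s)}$ exceed the target $\varepsilon$.
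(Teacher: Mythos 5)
Your proof is correct, and it takes a mildly different technical route from the paper's. The paper's proof does not split by regularity regime: it distributes $\an{\xi}^s$ onto the factors via $\an{\xi}^s\lesssim\an{\xi_1}^s+\an{\xi_2}^s$, applies Cauchy--Schwarz in $\xi_1$ to get $\an{\xi}^s|\hat Q_\varepsilon(u,v)(\xi)|\lesssim L^{-1/2}\,\mathbf{1}_{|\xi|>\varepsilon^{-\alpha(s)}}\an{\xi}^{-1}\|u\|_{H^s}\|v\|_{H^s}$, then splits $\an{\xi}^{-1}=\an{\xi}^{-1/\alpha(s)}\an{\xi}^{-1+1/\alpha(s)}\le\varepsilon\an{\xi}^{-1+1/\alpha(s)}$ on the high-frequency support and sums, using that $-2+2/\alpha(s)<-1$ so that the remaining sum is $O(L)$ and cancels the $1/L$. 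You instead factor $Q_\varepsilon=-(1-\chi_\varepsilon)W(uv)$ cleanly into a Fourier multiplier times the bilinear form $uv$, estimate the multiplier by its supremum on the support (which delivers the $\varepsilon$), and supply a separate $L$-uniform bilinear estimate -- the algebra property for $s>\tfrac12$, and an $H^s\times H^s\to L^2$ bound via discrete Young $\ell^{4/3}\ast\ell^{4/3}\hookrightarrow\ell^2$ for $s\in(\tfrac14,\tfrac12]$. Both arguments ultimately turn on the same two facts (the threshold $\varepsilon^{-\alpha(s)}$ and the decay $|\omega(\xi)|\le\an{\xi}^{-1}$), and both correctly track the $L$-normalization; the paper avoids the case split by leaving the bilinear estimate and the multiplier gain entangled, while your modular presentation makes the role of the cutoff exponent $\alpha(s)$ more transparent at the cost of two subcases. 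You are also right that the uniformity in $L$ of the bilinear bound is the delicate point and that it should be argued Fourier-side rather than by quoting a torus Sobolev embedding whose constant might a priori depend on $L$.
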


\begin{proof} Let $\xi\in \frac1{L} \Z$. Since for all $\xi_1,\xi_2 \in \frac1{L}\Z$, we have 
\[
\an{\xi_1+\xi_2}^s \leq C_s (\an{\xi_1}^s + \an{\xi_2}^s), 
\]
we have 
\[
\an{\xi}^s |\hat Q_\varepsilon(u,v)(\xi)| \leq \frac{C_s}{\sqrt L} {\bf 1}_{|\xi|\geq \varepsilon^{-\alpha(s)}}|\omega(\xi)| \sum_{\xi_1+\xi_2 = \xi} (\an{\xi_1}^s |\hat u (\xi_1)|\, |\hat v(\xi_2)| + |\hat u(\xi_1) |\an{\xi_2}^s|\hat v(\xi_2)|).
\]
By Cauchy-Schwarz, we get
\[
\an{\xi}^s |\hat Q_\varepsilon(u,v)(\xi)| \leq \frac{C_s}{\sqrt L} {\bf 1}_{|\xi|\geq \varepsilon^{-\alpha(s)}}\an{\xi}^{-1} \|u\|_{H^s}\|v\|_{H^s}.
\]
We have that
\[
{\bf 1}_{|\xi|\geq \varepsilon^{-\alpha(s)}}\an{\xi}^{-1} \leq {\bf 1}_{|\xi|\geq \varepsilon^{-\alpha(s)}}\an{\xi}^{-1/\alpha(s)} \an{\xi}^{-1 + 1/\alpha(s)}\leq \varepsilon \an{\xi}^{-1+1/\alpha(s)}.
\]
For $s\in (\frac14,\frac13)$, we have $-1+ \frac1{\alpha(s)} = -2s < -\frac12$ and for $s\geq \frac13$, we have $-1+ \frac1{\alpha(s)} = -\frac23 <-\frac12.$

It remains to sum on $\xi$ to get the result.
\end{proof}

\begin{proposition}\label{prop:estonTeps} Let $s>0$. There exists $C_s$ such that for all $\varepsilon \in (0,1)$ and all $L\geq 1$, and for all $u,v,w \in H^s(L\T)$, we have 
\[
\|P_\varepsilon(u,v,w)\|_{H^s} \leq C_s L\|u\|_{H^s}\|v\|_{H^s}\|w\|_{H^s}.
\]
\end{proposition}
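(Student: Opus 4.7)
The plan is to mimic the strategy of Proposition~\ref{prop:estNeps}, exploiting a crucial cancellation that makes $P_\varepsilon$ less singular in $L$ than $N_\varepsilon$. First, the explicit formula
\[
\frac{\omega(\xi)\omega(\xi_2+\xi_3)}{\Delta^\xi_{\xi_1,\xi_2+\xi_3}} = \frac{-1}{\omega(\xi_1)\bigl(3+\tfrac{\xi^2+\xi_1^2+(\xi_2+\xi_3)^2}{2}\bigr)}
\]
combined with the bound $|\omega(\xi_1)| \gtrsim 1/(L\an{\xi_1})$ (which uses only $|\xi_1|\geq 1/L$) yields
\[
\Bigl|\frac{\omega(\xi)\omega(\xi_2+\xi_3)}{\Delta^\xi_{\xi_1,\xi_2+\xi_3}}\Bigr| \lesssim \frac{L\,\an{\xi_1}}{3+\tfrac{\xi^2+\xi_1^2+(\xi_2+\xi_3)^2}{2}}.
\]
Only one factor of $L$ appears here, in contrast with the $L^2$ of the bilinear estimate, because the numerator $\omega(\xi_2+\xi_3)$ already neutralises the second $1/\omega$ singularity. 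Combined with the $1/(\pi L)$ prefactor of $\hat P_\varepsilon$, the kernel carries no net $L$-dependence, so the $L$ in the conclusion must be produced by the subsequent $\ell^2$-summation.

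Next I use $3+\tfrac{\xi^2+\xi_1^2+(\xi_2+\xi_3)^2}{2} \gtrsim \an{\xi_1}\an{\xi_2+\xi_3}$ (AM--GM applied to the squares) together with Peetre's inequality $\an{\xi}^s \lesssim \an{\xi_1}^s\an{\xi_2+\xi_3}^s$, and split the sum over $\xi_1+\xi_2+\xi_3=\xi$ according to whether $\an{\xi_1} \geq \an{\xi_2+\xi_3}$. In each of the two regions, one of the weights $\an{\xi_1}^{s-1}$, $\an{\xi_2+\xi_3}^{s-1}$ emerges, and the estimate reduces to bounding
\[
A_1 := \bigl((\an{\cdot}^{s-1}|\hat u|)\star(|\hat v|\star|\hat w|)\bigr)(\xi),\qquad A_2 := \bigl(|\hat u|\star (\an{\cdot}^{s-1}(|\hat v|\star|\hat w|))\bigr)(\xi)
\]
in $\ell^2_\xi$. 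Each is handled by Young's convolution inequality ($\ell^1\star\ell^2 \hookrightarrow \ell^2$ or $\ell^2\star\ell^1 \hookrightarrow \ell^2$) combined with Cauchy--Schwarz.

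The $L$ is then generated by two elementary observations: (i)~$\|\an{\cdot}^{s-1}|\hat u|\|_{\ell^1} \leq (\sum_{\xi_1 \in \frac{1}{L}\Z^*}\an{\xi_1}^{-2})^{1/2}\|u\|_{H^s} \lesssim \sqrt L\,\|u\|_{H^s}$, using the linear growth in $L$ of the displayed sum; and (ii)~$\||\hat v|\star|\hat w|\|_{\ell^2} = \sqrt{2\pi L}\,\|v^\sharp w^\sharp\|_{L^2}$, where $v^\sharp$ is the function with Fourier coefficients $|\hat v|$, and the 1D Sobolev embedding $H^{1/4}(L\T) \hookrightarrow L^4(L\T)$ gives $\|v^\sharp w^\sharp\|_{L^2} \lesssim \|v\|_{H^s}\|w\|_{H^s}$ for $s \geq 1/4$. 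Multiplying the two $\sqrt L$ factors yields the stated $L$. The main obstacle will be the low-regularity range $s \in (0,1/4)$: there the Sobolev product estimate for $\|v^\sharp w^\sharp\|_{L^2}$ is not directly available, and to recover the $L$ bound one must refine the splitting to exploit the denominator's decay in \emph{both} $\an{\xi_1}$ and $\an{\xi_2+\xi_3}$ simultaneously, tuning the Peetre exponents so that every surviving sum over $\frac{1}{L}\Z^*$ converges with constant $O(L)$.
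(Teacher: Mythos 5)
Your opening observation is exactly the paper's: only one inverse-$\omega$ singularity survives in the kernel of $P_\varepsilon$ (since $\omega(\xi_2+\xi_3)$ in the numerator neutralises the other), so the factor produced is $L$, not $L^2$, and it is absorbed by the $1/(\pi L)$ prefactor. From that point on, however, your route diverges from the paper's, and it contains a genuine gap.

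The paper does not discard the $\xi^2$ in the denominator. It keeps the full $6+\xi^2+\xi_1^2+\eta^2$ (with $\eta=\xi_2+\xi_3$), distributes the Sobolev weight so that the kernel times $\an{\xi}^{s'}\an{\xi_1}^{-s'}\an{\eta}^{-s'}$ is bounded by $CL\,\an{\xi}^{-(1+s')/2}\an{\eta}^{-(1+s')/2}$ with $s'=\min(s,1)$, collapses the inner $\xi_2,\xi_3$ sum by Cauchy--Schwarz, then does one more Cauchy--Schwarz in $\eta$ (giving $\sqrt L$) and sums in $\xi$ (giving another $\sqrt L$). Since $1+s'>1$ for every $s>0$, both $\ell^2$ sums over $\frac1L\Z^*$ are $O(L)$, and the whole argument is uniform in $s>0$. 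Your route instead applies AM--GM to get $3+\tfrac{\xi^2+\xi_1^2+(\xi_2+\xi_3)^2}{2}\gtrsim\an{\xi_1}\an{\xi_2+\xi_3}$, throwing away the $\xi^2$ decay, and then must recover summability from the convolution structure, which forces you to estimate $\||\hat v|\star|\hat w|\|_{\ell^2}$ by the $L^2$ norm of a product. That is a bilinear product estimate $H^s\times H^s\to L^2$, which is genuinely false on $L\T$ for $s<1/4$: taking $v=w$ concentrated on a block of $N$ consecutive frequencies shows the inequality $\|v^2\|_{L^2}\lesssim\|v\|_{H^s}^2$ can fail when $s<1/4$.

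So for $s\in(0,1/4)$ your proof does not close, and the suggested fix ("tune the Peetre exponents so that every surviving sum converges with constant $O(L)$") is precisely the missing step: you would need to reintroduce the $\xi^2$ term that AM--GM discarded, because that is what produces the $\an{\xi}^{-(1+s')/2}$ decay that lets the final $\xi$-sum converge without invoking a product estimate. Once you do that, you are effectively redoing the paper's weight accounting. For $s\geq 1/4$ your argument is sound and arguably more transparent than the paper's (it isolates the Young step cleanly), but as written it does not prove the proposition for the full stated range $s>0$.
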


\begin{proof} Let $\xi\in \frac1{L}\Z$. By definition of $P_\varepsilon$, we have 
\begin{multline*}
\an{\xi}^s|\hat P_\varepsilon(u,v,w) (\xi)| \leq \\
 \frac1{\pi L} \sum_{\xi_1+\xi_2+\xi_3 = \xi} \an{\xi}^s\an{\xi_1}^{-s'}\an{\xi_2+\xi_3}^{-s'} \frac{|\omega(\xi)\omega(\xi_2+\xi_3)|}{|\Delta^\xi_{\xi_1,\xi_2+\xi_3}|}\an{\xi_1}^{s'}|\hat u(\xi_1)|\an{\xi_2+\xi_3}^{s'}|\hat v(\xi_2)\hat w(\xi_3)|
\end{multline*}
where $s' = min(s,1)$.

Since
\[
\an{\xi}^{s-s'} = \an{\xi_1+\xi_2 + \xi_3}^{s-s'}\leq C_s (\an{\xi_1}^{s-s'} + \an{\xi_2}^{s-s'} + \an{\xi_3}^{s-s'}),\quad 
\an{\xi_2+\xi_3}^s \leq C_s (\an{\xi_2}^s + \an{\xi_3}^s), 
\]
using Cauchy-Schwarz, we get
\[
\an{\xi}^s|\hat P_\varepsilon(u,v,w) (\xi)| \leq C_s \frac1{\pi L} \sum_{\xi_1+\eta = \xi} \an{\xi}^{s'}\an{\xi_1}^{-s'}\an{\eta}^{-s'} \frac{|\omega(\xi)\omega(\eta)|}{|\Delta^\xi_{\xi_1,\eta}|}\an{\xi_1}^s|\hat u(\xi_1)\|v\|_{H^s}\|w\|_{H^s}.
\]
We have that for all $\xi_1,\eta,\xi$ such that $\xi_1+\eta = \xi$,
\[
\frac{|\omega(\xi)\omega(\eta)|}{|\Delta^\xi_{\xi_1,\eta}|} \leq 2L \frac{\an{\xi_1}}{6 + \xi_1^2 + \eta^2 + \xi^2}.
\]
Therefore, we have that
\[
 \an{\xi}^{s'}\an{\xi_1}^{-s'}\an{\eta}^{-s'} \frac{|\omega(\xi)\omega(\eta)|}{|\Delta^\xi_{\xi_1,\eta}|}\leq 2L \an{\xi}^{-(1+s')/2} \an{\eta}^{-(1+s')/2} \frac{\an{\xi_1}^{1-s'}\an{\xi}^{1/2+ 3s'/2} \an{\eta}^{(1-s')/2}}{6 + \xi^2 +\eta^2+\xi_1^2}.
\]
Since $1-s' +\frac12 + \frac{3s'}2 + \frac{1-s'}{2} = 2$, we deduce
\[
 \an{\xi}^{s'}\an{\xi_1}^{-s'}\an{\eta}^{-s'} \frac{|\omega(\xi)\omega(\eta)|}{|\Delta^\xi_{\xi_1,\eta}|}\leq 2L\an{\xi}^{-(1+s')/2} \an{\eta}^{-(1+s')/2}.
\]
By Cauchy-Schwarz, we get
\[
\an{\xi}^s|\hat P_\varepsilon(u,v,w) (\xi)| \leq C_s \sqrt L \an{\xi}^{-(1+s')/2}\|u\|_{H^s}\|v\|_{H^s}\|w\|_{H^s}.
\]
It remains to sum on $\xi$ to get the result.
\end{proof}

\begin{definition} We define $\Psi_{\varepsilon,L,n}(t)$ the flow corresponding to $u_n$, that is for $a\in H^s$,
\[
\Psi_{\varepsilon,L,0}(t)(a) = S(t)(a),
\]
and for $n\geq 0$,
\[
\Psi_{n+1}(t)(a) = \varepsilon \int_{0}^t W S(t-\tau) \Big[\sum_{n_1+n_2 = n} \Psi_{\varepsilon,L,n_1}(\tau)(a) \Psi_{\varepsilon,L,n_2}(\tau)(a)\Big] d\tau.
\]
We often write for short $\Psi_n = \Psi_{\varepsilon,L,n}$.
\end{definition}

\begin{proposition}\label{prop:fstestonPsin}Let $s>\frac14$. There exists $C_s$ such that for all $L\geq 1$, $D\geq 1$, all $T\leq T_0(L,D)= \frac1{C_s DL^2}$ and for all $\varepsilon \leq \epsilon_0(L,D) =\frac1{C_s D^2 L^5 }$, for all $a\in H^s$ such that $\|a\|_{H^s}\leq D\sqrt L$, then for all $n\in \N$, all $t\in [-T\varepsilon^{-2}, T\varepsilon^{-2}]$, we have
\[
\|\Psi_n (t) (a) \|_{H^s} \leq 8^{-n} c_n D\sqrt L
\]
where $(c_n)_n$ are the Catalan numbers.

What is more, for all $b\in H^s$ such that $\|b\|_{H^s}\leq D\sqrt L$, we have
\[
\|\Psi_n(t)(a) - \Psi_n(t)(b)\|_{H^s}\leq 8^{-n}c_n \|a-b\|_{H^s}.
\]
\end{proposition}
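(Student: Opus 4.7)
My plan is to prove both estimates together by strong induction on $n$. The base case $n=0$ is immediate: $\Psi_0(t)=S(t)=e^{-tW}$ is multiplication in Fourier by the unimodular symbol $e^{-it\omega(\xi)}$, hence an isometry of $H^s$, so $\|\Psi_0(a)\|_{H^s}=\|a\|_{H^s}\leq D\sqrt L$ and $\|\Psi_0(a)-\Psi_0(b)\|_{H^s}=\|a-b\|_{H^s}$, matching $8^0c_0=1$.

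For the inductive step I would exploit the identity $u_{n+1} = v_{n+1} - \varepsilon \sum_{n_1+n_2=n}N_\varepsilon(u_{n_1},u_{n_2})$ built into the normal form construction above. Setting $M_m=\sup_{|t|\leq T\varepsilon^{-2}}\|u_m(t)\|_{H^s}$ and assuming $M_m\leq 8^{-m}c_m D\sqrt L$ for all $m\leq n$, the $N_\varepsilon$-sum is controlled through Proposition \ref{prop:estNeps} by $CL^2\varepsilon^{1/2}\sum_{n_1+n_2=n} M_{n_1}M_{n_2}$, and the Catalan recursion $c_{n+1}=\sum_{n_1+n_2=n}c_{n_1}c_{n_2}$ collapses this to $CD^2L^3\varepsilon^{1/2}\cdot 8^{-n}c_{n+1}$, a small fraction of $8^{-(n+1)}c_{n+1}D\sqrt L$ whenever $\varepsilon\leq (C_sD^2L^5)^{-1}$ with $C_s$ large. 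For $v_{n+1}$ itself I would apply Duhamel to the equation of Proposition \ref{prop:eqv} using the unitarity of $S$ on $H^s$; the initial value $v_{n+1}(0)$ vanishes for $n\geq 1$ and equals $\varepsilon N_\varepsilon(a,a)$ for $n=0$, which is absorbed exactly like the $N_\varepsilon$-sum. By Proposition \ref{prop:estonQeps} the $Q_\varepsilon$ part converts $|t|\varepsilon^2\leq T$ and the Catalan recursion into $TC_sD^2L\cdot 8^{-n}c_{n+1}$; by Proposition \ref{prop:estonTeps} the $P_\varepsilon$ part yields $TC_sL(D\sqrt L)^3\cdot 8^{-(n-1)}c^{(3)}_{n-1}$ with $c^{(3)}_{n-1}=\sum_{n_1+n_2+n_3=n-1}c_{n_1}c_{n_2}c_{n_3}$.

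The combinatorial fact that drives the induction is the existence of a universal constant $K$ such that $c^{(3)}_{n-1}\leq Kc_{n+1}$ for every $n\geq 0$; I would establish it either from the closed-form $[x^n]C(x)^k=\tfrac{k}{2n+k}\binom{2n+k}{n}$ for $C(x)=\sum_n c_n x^n$, or from the observation that $C^3$ is analytic on the open disk of radius $1/4$ with the same $n^{-3/2}$ edge asymptotics as $C$. With it in hand, the $P_\varepsilon$-contribution collapses to an expression of size $T D^3L^{5/2}\cdot 8^{-n}c_{n+1}$ (up to universal constants), absorbed into $8^{-(n+1)}c_{n+1}D\sqrt L$ once $T$ meets the prescribed smallness; the $P_\varepsilon$-term is what drives the choice of $T_0$, and the hypothesis $D\geq 1$ allows the constant $C_s$ to be enlarged so that all three contributions fit comfortably.

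The Lipschitz inequality runs in parallel using the multilinear difference identities $N_\varepsilon(u,u)-N_\varepsilon(w,w)=N_\varepsilon(u+w,u-w)$, together with the analogous trilinear expansion for $P_\varepsilon$ and bilinear one for $Q_\varepsilon$: in each resulting summand exactly one factor carries $\|a-b\|_{H^s}$ while the others are controlled by the $M_m$-side of the induction, and the Catalan bookkeeping is unchanged. The main obstacle I foresee is purely combinatorial: nailing down $c^{(3)}_{n-1}\leq Kc_{n+1}$ so that the cubic $P_\varepsilon$ closes inside the Catalan weight $8^{-n}c_n$, and matching the powers of $D$ and $L$ across $T_0$, $\varepsilon_0$, and the target bound — the $P_\varepsilon$ term, being the most singular in the time integral, is what governs both constraints.
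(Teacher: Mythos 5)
Your proposal is essentially the same argument as the paper's: the same induction, the same splitting of $\Psi_{n+1}$ via $v_{n+1}=u_{n+1}+\varepsilon\sum N_\varepsilon(u_{n_1},u_{n_2})$, the same Duhamel formula from Proposition~\ref{prop:eqv}, and the same use of Propositions~\ref{prop:estNeps}, \ref{prop:estonQeps}, \ref{prop:estonTeps} together with the Catalan recursion. You are actually a bit more explicit than the paper on two points worth keeping: you isolate the combinatorial fact $c^{(3)}_{n-1}\leq Kc_{n+1}$ (which the paper uses silently when it converts the triple-sum $8^{-n+1}\sum c_{n_1}c_{n_2}c_{n_3}$ into $8^{-n-1}c_{n+1}$; in fact $C^3=(C-1-xC)/x^2$ gives $c^{(3)}_{n-1}=c_{n+1}-c_n$, so $K=1$), and you note the nonzero initial value $v_1(0)=\varepsilon N_\varepsilon(a,a)$ when $n=0$, a term the paper's displayed Duhamel formula omits but which, as you say, is absorbed exactly like the $N_\varepsilon$-sum. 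The one thing you leave out is the density step: Proposition~\ref{prop:eqv} is only stated (and its time-derivative manipulations only justified) for trigonometric-polynomial data, so the paper first closes the induction on that dense class and then extends to all $a\in H^s$ with $\|a\|_{H^s}\leq D\sqrt L$ using the Lipschitz estimate of the inductive hypothesis. You should record that reduction explicitly to make the argument complete.
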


\begin{proof}We work by induction on $n$. For $n=0$, we have that $\Psi_0(t) = S(t)$, which preserves the $H^s$ norm and thus the inequalities are verified. 

Assume the inequalities are true for all $n'\leq n$. We prove them for $n+1$.

We first deal with the cases $a,b$ trigonometric polynomials. We set $u_n(t) = \Psi_n(t)(a)$ and 
\[
v_n = u_n + \varepsilon \sum_{n_1+n_2=n-1} N_\varepsilon(u_{n_1},u_{n_2})
\]
as before.

We have that 
\[
\Psi_{n+1}(t) (a) = u_{n+1}(t) = v_{n+1}(t) - \varepsilon\sum_{n_1+n_2 = n}N_\varepsilon(u_{n_1}(t),u_{n_2}(t)).
\]
We use that $v_{n+1}$ is a solution to the equation described in Proposition \ref{prop:eqv} to get
\[
v_{n+1}(t) = \int_{0}^t S(t-\tau)\Big[\sum_{n_1+n_2 = n}\varepsilon Q_\varepsilon(u_{n_1},u_{n_2}) + \varepsilon^2\sum_{n_1+n_2+n_3 = n-1} P_\varepsilon(u_{n_1},u_{n_2},u_{n_3})\Big] d\tau.
\]
We get the induction formula
\begin{multline*}
\Psi_{n+1}(t)(a) = \int_{0}^t S(t-\tau)\Big[\varepsilon\sum_{n_1+n_2 = n}Q_\varepsilon(\Psi_{n_1}(\tau)(a),\Psi_{n_2}(\tau)(a)) \Big] \\
+ \int_{0}^t S(t-\tau)\Big[ \varepsilon^2\sum_{n_1+n_2+n_3 = n-1} P_\varepsilon(\Psi_{n_1}(\tau)(a),\Psi_{n_2}(\tau)(a),\Psi_{n_3}(\tau)(a))\Big]\\
-\varepsilon\sum_{n_1+n_2 = n}N_\varepsilon(\Psi_{n_1}(t)(a),\Psi_{n_2}(t)(a)).
\end{multline*}

On the one hand, we have for all $t$, thanks to Proposition \ref{prop:estNeps},
\[
\|\varepsilon\sum_{n_1+n_2 = n}N_\varepsilon(\Psi_{n_1}(t)(a),\Psi_{n_2}(t)(a)) \|_{H^s} \leq C_s\varepsilon^{1/2}L^2\sum_{n_1+n_2=n} \|\Psi_{n_1}(t)(a)\|_{H^s}\|\Psi_{n_2}(t)(a)\|_{H^s}. 
\]
We also have
\begin{multline*}
\|\varepsilon\sum_{n_1+n_2 = n}N_\varepsilon(\Psi_{n_1}(t)(a),\Psi_{n_2}(t)(a)) - \varepsilon\sum_{n_1+n_2 = n}N_\varepsilon(\Psi_{n_1}(t)(b),\Psi_{n_2}(t)(b)) \|_{H^s} \\
\leq C_s\varepsilon^{1/2}L^2\sum_{n_1+n_2=n} (\|\Psi_{n_1}(t)(a)\|_{H^s} + \|\Psi_{n_1}(t)(b)\|_{H^s})\|\Psi_{n_2}(t)(a) - \Psi_{n_2}(t)(b)\|_{H^s}. 
\end{multline*}

Using the induction hypothesis for appropriate times, we get
\[
\|\varepsilon \sum_{n_1+n_2 = n}N_\varepsilon(\Psi_{n_1}(t)(a),\Psi_{n_2}(t)(a)) \|_{H^s} \leq C_s 8^{-n} D L^{5/2} \varepsilon^{1/2}\Big(\sum_{n_1+n_2=n} c_{n_1}c_{n_2}\Big) D\sqrt L. 
\]
Taking $\varepsilon $ small enough such that $DL^{5/2}\varepsilon^{1/2}  C_s \leq \frac1{24}$ and by definition of the Catalan numbers, we get
\[
\|\varepsilon \sum_{n_1+n_2 = n}N_\varepsilon(\Psi_{n_1}(t)(a),\Psi_{n_2}(t)(a)) \|_{H^s} \leq \frac1{3}8^{-(n+1)}c_{n+1} D \sqrt L . 
\]
For the same reasons, we have
\[
\|\varepsilon\sum_{n_1+n_2 = n}N_\varepsilon(\Psi_{n_1}(t)(a),\Psi_{n_2}(t)(a)) - \varepsilon\sum_{n_1+n_2 = n}N_\varepsilon(\Psi_{n_1}(t)(b),\Psi_{n_2}(t)(b)) \|_{H^s} \leq \frac1{3}8^{-(n+1)} c_{n+1} \|a-b\|_{H^s}.
\]

Set 
\[
A(t) (a) = A_1(t)(a) + A_2(t)(a)
\]
with
\[
A_1(t) (a) =  \varepsilon\int_{0}^t S(t-\tau)\Big[\sum_{n_1+n_2 = n}Q_\varepsilon(\Psi_{n_1}(\tau)(a),\Psi_{n_2}(\tau)(a))\Big]d\tau
\]
and
\[
A_2(t)(b) = \varepsilon^2 \int_{0}^t S(t-\tau) \Big[\sum_{n_1+n_2+n_3 = n-1} P_\varepsilon(\Psi_{n_1}(\tau)(a),\Psi_{n_2}(\tau)(a),\Psi_{n_3}(\tau)(a))\Big].
\]

For $t \in [-T\varepsilon^{-2},T\varepsilon^{-2}]$, we have
\[
\|A_1(t)(a)\|_{H^s} \leq \varepsilon^{-1} T \sum_{n_1+n_2=n}\sup_{\tau \in [-T\varepsilon^{-2},T\varepsilon^{-2}]} \|Q_\varepsilon(\Psi_{n_1}(\tau)(a),\Psi_{n_2}(\tau)(a))\|_{H^s}.
\]
Using Proposition \ref{prop:estonQeps}, we get
\[
\|A_1(t)(a)\|_{H^s} \leq C_s T \sum_{n_1+n_2=n}\sup_{\tau \in [-T\varepsilon^{-2},T\varepsilon^{-2}]} \|\Psi_{n_1}(\tau)(a)\|_{H^s}\|\Psi_{n_2}(\tau)(a))\|_{H^s}.
\]
We use the induction hypothesis and get
\[
\|A_1(t)(a)\|_{H^s} \leq C_s T \sum_{n_1+n_2=n}c_{n_1}c_{n_2}8^{-n} D^2L.
\]
Choosing $T$ small enough such that $24 C_sT D\sqrt L \leq 1$, we get
\[
\|A_1(t)(a)\|_{H^s} \leq \frac13 c_{n+1}8^{-(n+1)} D\sqrt L.
\]
Similarly, using the bilinearity of $Q_\varepsilon$ and the bilinear estimates on $Q_\varepsilon$, we get
\[
\|A_1(t)(a) - A_1(t)(b)\|_{H^s} \leq \frac13 c_{n+1}8^{-(n+1)}\|a-b\|_{H^s}.
\]

For $t \in [-T\varepsilon^{-2},T\varepsilon^{-2}]$, we have
\[
\|A_2(t)(a)\|_{H^s} \leq T \sum_{n_1+n_2+n_3 = n-1} \sup_{\tau \in [-T\varepsilon^{-2},T\varepsilon^{-2}] }\|P_\varepsilon(\Psi_{n_1}(\tau)(a),\Psi_{n_2}(\tau)(a),\Psi_{n_3}(\tau)(a))\|_{H^s}.
\]
Using Proposition \ref{prop:estonTeps}, we get
\[
\|A_2(t)(a)\|_{H^s} \leq C_s L T \sum_{n_1+n_2+n_3 = n-1} \sup_{\tau \in [-T\varepsilon^{-2},T\varepsilon^{-2}] }\|\Psi_{n_1}(\tau)(a)\|_{H^s}\|\Psi_{n_2}(\tau)(a)\|_{H^s}\|\Psi_{n_3}(\tau)(a))\|_{H^s}.
\]
Using the induction hypothesis, we get
\[
\|A_2(t)(a)\|_{H^s} \leq C_s L T \sum_{n_1+n_2+n_3 = n-1} 8^{-n+1} c_{n_1}c_{n_2}c_{n_3}D^3L^{3/2}.
\]
Taking $T$ small enough such that $192 C_s T D^2 L^2 \leq 1$, we get
\[
\|A_2(t)(a)\|_{H^s} \leq c_{n+1} 8^{-n-1}DL^{1/2}.
\]
Similarly, using the trilinearity of $P_\varepsilon$, we get
\[
\|A_2(t)(a)-A_2(t)(b)\|_{H^s} \leq c_{n+1} 8^{-n-1}\|a-b\|_{H^s}.
\]

Assume $a$ is not a trigonometric polynomial. There exists a sequence $(a_m)_m$, such that $a_m \rightarrow a$ in $H^s$, and such that for all $m$
\[
\|a_m\|_{H^s} \leq D\sqrt L.
\]
By definition, we have 
\[
\Psi_{n+1}(t)(a) = -\varepsilon \sum_{n_1+n_2 = n} \int_{0}^t S(t-\tau)W[ \Psi_{n_1}(\tau)(a) \Psi_{n_2}(\tau)(a)] d\tau.
\]
Therefore, we get that 
\begin{multline*}
\Psi_{n+1}(t)(a) - \Psi_{n+1}(t)(a_m) 
= -\varepsilon \sum_{n_1+n_2 = n} \int_{0}^t S(t-\tau)W[ (\Psi_{n_1}(\tau)(a)-\Psi_{n_1}(\tau)(a_m)) \Psi_{n_2}(\tau)(a) \Big]\\
 -\varepsilon \sum_{n_1+n_2 = n} \int_{0}^t S(t-\tau)W[ \Psi_{n_1}(\tau) (a_m)(\Psi_{n_2}(\tau)(a) - \Psi_{n_2}(\tau)(a_m)) ] d\tau.
\end{multline*}
We deduce that for  $t \in [-T\varepsilon^{-2},T\varepsilon^{-2}]$, we have 
\begin{multline*}
\|\Psi_{n+1}(t)(a) - \Psi_{n+1}(t)(a_m)\|_{H^s} \\
\lesssim T\varepsilon^{-1} \sum_{n_1+n_2 = n} \sup_{\tau \in  [-T\varepsilon^{-2},T\varepsilon^{-2}]}[\|\Psi_{n_1}(\tau)(a)-\Psi_{n_1}(\tau)(a_m)\|_{H^s} \|\Psi_{n_2}(\tau)(a)\|_{H^s} \\
+ T\varepsilon^{-1} \sum_{n_1+n_2 = n} \sup_{\tau \in  [-T\varepsilon^{-2},T\varepsilon^{-2}]}\|\Psi_{n_1}(\tau) (a_m)\|_{H^s}\|\Psi_{n_2}(\tau)(a) - \Psi_{n_2}(\tau)(a_m)\|_{H^s}].
\end{multline*}
By the induction hypothesis, we get
\[
\|\Psi_{n+1}(t)(a) - \Psi_{n+1}(t)(a_m)\|_{H^s} \lesssim T\varepsilon^{-1} 8^{-n}D\sqrt L \sum_{n_1+n_2 = n} c_{n_1}c_{n_2}\|a-a_m\|_{H^s} .
\]
We deduce the continuity of $\Psi_{n+1}(t)$ and thus that the induction hypothesis is satisfied for any $a,b\in H^s$ and for $n+1$.
\end{proof}

\begin{corollary}\label{cor:LWP} The Cauchy problem \eqref{cauchyproblem} is well-posed in $\mathcal C([-\varepsilon^{-2}T_0(L,D),\varepsilon^{-2}T_0(L,D)], H^s)$ for initial datum $a \in H^s$ such that $\|a\|_{H^s}\leq D\sqrt L$, its flow $\Psi_{\varepsilon,L}$ can be written as the series
\[
\Psi_{\varepsilon,L} = \sum_{n} \Psi_{\varepsilon,L,n}
\]
and we have 
\[
\sup_{t\in [-\varepsilon^{-2}T_0(L,D),\varepsilon^{-2}T_0(L,D)]}\|\Psi_{\varepsilon,L,n} (t)(a)\|_{H^s} \leq 2^{-n} D\sqrt L.
\]
\end{corollary}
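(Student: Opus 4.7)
The plan is to deduce the corollary directly from Proposition \ref{prop:fstestonPsin} by summing the $n$-indexed bounds. The key elementary fact is that the Catalan numbers satisfy $c_n = \binom{2n}{n}/(n+1) \leq 4^n$, so $8^{-n}c_n \leq 2^{-n}$ and the estimate $\|\Psi_n(t)(a)\|_{H^s} \leq 8^{-n} c_n D\sqrt L$ from Proposition \ref{prop:fstestonPsin} yields $\|\Psi_n(t)(a)\|_{H^s} \leq 2^{-n} D \sqrt L$, which is exactly the stated individual bound. In particular, the series $\sum_n \Psi_{\varepsilon,L,n}(t)(a)$ converges normally in $H^s$, uniformly in $t \in [-\varepsilon^{-2}T_0(L,D), \varepsilon^{-2}T_0(L,D)]$, with sum of norms bounded by $2 D \sqrt L$. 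I would define $\Psi_{\varepsilon,L}(t)(a)$ to be this sum; continuity in $t$ of the sum follows from uniform convergence once one observes, by an immediate induction based on \eqref{defun}, that each $\Psi_n(t)(a)$ is continuous in $t$ with values in $H^s$.

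To show that $\Psi_{\varepsilon,L}(t)(a)$ solves \eqref{cauchyproblem}, I would sum the defining Duhamel relations
\[
\Psi_{\varepsilon,L,n+1}(t)(a) = \varepsilon \int_0^t WS(t-\tau) \sum_{n_1+n_2=n} \Psi_{\varepsilon,L,n_1}(\tau)(a)\,\Psi_{\varepsilon,L,n_2}(\tau)(a)\,d\tau
\]
over $n \geq 0$ and combine with $\Psi_0(t)(a) = S(t)a$. Rearranging the absolutely convergent double series as a Cauchy product,
\[
\sum_{n\geq 0}\sum_{n_1+n_2=n} \Psi_{n_1}(\tau)(a)\,\Psi_{n_2}(\tau)(a) = \Big(\sum_{n_1} \Psi_{n_1}(\tau)(a)\Big)^{\!2} = \Psi_{\varepsilon,L}(\tau)(a)^2,
\]
one arrives at the desired fixed point equation
\[
\Psi_{\varepsilon,L}(t)(a) = S(t)a + \varepsilon \int_0^t WS(t-\tau) \,\Psi_{\varepsilon,L}(\tau)(a)^2\,d\tau.
\]
The interchange of $\sum_n$, $\int_0^t$, and the bilinear product is the one step that needs care: it is justified by passing to the limit on the partial sums $S_N(t)(a) = \sum_{n=0}^N \Psi_n(t)(a)$, using that $W$ gains a derivative (so $WS(t-\tau)$ maps $H^{s-1}$ continuously into $H^s$) together with the Sobolev bilinear estimate $\|uv\|_{H^{s-1}} \lesssim_s \|u\|_{H^s}\|v\|_{H^s}$ available on $L\T$ for $s > 1/4$; the normal convergence of the series in $H^s$ then controls the tail by a geometric series and lets us pass to the limit.

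Uniqueness and continuous dependence follow at once from the Lipschitz estimate in Proposition \ref{prop:fstestonPsin}: summing $\|\Psi_n(t)(a) - \Psi_n(t)(b)\|_{H^s} \leq 2^{-n}\|a-b\|_{H^s}$ gives the global Lipschitz bound $\|\Psi_{\varepsilon,L}(t)(a) - \Psi_{\varepsilon,L}(t)(b)\|_{H^s} \leq 2\|a-b\|_{H^s}$ on the full time interval. Uniqueness within the a priori ball of radius $2D\sqrt L$ in $\mathcal C([-\varepsilon^{-2}T_0,\varepsilon^{-2}T_0], H^s)$ is then obtained by a short Gronwall argument applied to the difference of two solutions, using the bilinear estimate to control $W(u^2 - \tilde u^2) = W((u-\tilde u)(u+\tilde u))$. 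The only genuinely delicate step in the whole argument is the interchange producing the fixed-point equation above; everything else is bookkeeping once Proposition \ref{prop:fstestonPsin} is in place.
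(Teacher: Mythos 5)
Your proposal is correct, and it tracks the paper's proof closely for existence and continuous dependence: the Catalan bound $c_n \leq 4^n$ converts the estimate of Proposition \ref{prop:fstestonPsin} into the normal convergence $\|\Psi_{\varepsilon,L,n}(t)(a)\|_{H^s}\leq 2^{-n}D\sqrt L$, and summing the Lipschitz bounds gives the factor $2$ in the stability estimate. Where you genuinely diverge from the paper is the uniqueness argument. The paper sets up a fixed point problem $u(t) = e^{-(t-t_0)W}a - \varepsilon W\int_{t_0}^t e^{-(t-\tau)W}[u^2(\tau)]\,d\tau$ in $L^2(L\T)$ on subintervals $|t-t_0|\lesssim\varepsilon^{-1}$ and then extends by a connectedness/continuation argument; you instead run a Gronwall inequality directly in $H^s$ on the difference $v=u-\tilde u$, using the bilinear estimate $\|uv\|_{H^{s-1}}\lesssim_s\|u\|_{H^s}\|v\|_{H^s}$ together with $W:H^{s-1}\to H^s$. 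Both are standard and both work; the Gronwall route is more elementary and one-step, at the cost of a fractional-Sobolev product estimate (which is implicit in the proof of Proposition \ref{prop:estonQeps}, but which the paper's $L^2$ approach does not need, since for $u\in L^2$ one can bound $\widehat{u^2}$ in $\ell^\infty$ directly). Your claim that the Gronwall argument only gives uniqueness "within the a priori ball of radius $2D\sqrt L$" is unnecessarily conservative: since any competing solution in $\mathcal C([-T_0\varepsilon^{-2},T_0\varepsilon^{-2}],H^s)$ is bounded in $H^s$ by continuity on a compact interval, Gronwall applies with that bound and yields uniqueness in the full space, matching what the paper claims. Finally, the Cauchy-product step showing that the sum of the $\Psi_n$ actually solves the Duhamel equation is only mentioned informally in the paper ("which gives a solution"), so your more explicit treatment of the interchange is a welcome addition rather than a deviation.
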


\begin{proof} Existence : Catalan numbers satisfy the estimate
\[
c_n \leq 4^n.
\]
We deduce that
\[
\|\Psi_{\varepsilon,L,n} (a)\|_{H^s} \leq 2^{-n} D\sqrt L
\]
and thus the series $\sum_n \Psi_{L,\varepsilon,n}$ converges, which gives a solution up to times of order $\varepsilon^{-2}$. 

Uniqueness : the uniqueness can be deduced by solving the fixed point problem
\[
u (t) = A_{\varepsilon,L,a,t_0}(u) (t) := e^{-(t-t_0)W} a - \varepsilon W \int_{t_0}^{t} e^{-(t-\tau)W} [u^2(\tau)]d\tau
\]
in $L^2(L\T)$ for times of order $|t-t_0| \lesssim \varepsilon^{-1}$, which is sufficient to get uniqueness by standard connectedness arguments.

Continuity in the initial datum : from the estimates on the Catalan numbers, we get that
\[
\|\Psi_{\varepsilon,L,n}(t) (a) - \Psi_{\varepsilon,L,n}(t) (b)\|_{H^s} \leq 2^{-n}\|a-b\|_{H^s}.
\]
Summing on $n$, we get
\[
\|\Psi_{\varepsilon,L}(t) (a) - \Psi_{\varepsilon,L}(t) (b)\|_{H^s} \leq 2\|a-b\|_{H^s}.
\]
\end{proof}

\section{Trees}\label{sec:trees}

In this section, but in this section only we use the letters $A$ and $\mathcal A$ to denote trees and sets of trees, the letter $T$ being already taken by time. 

\subsection{\texorpdfstring{Trees describing $\Psi_{\varepsilon,L,n}$}{Trees describing PsiEpsLn}}\label{subsec:FD}

We recall that the sequence $(\Psi_n(t)(a))_n$ for $a\in H^s$ (we omit the dependence in $(\varepsilon, L)$) is defined as follow:
\[
\Psi_0(t)(a) = S(t) a,\quad \Psi_{n+1}(t)(a) = -\varepsilon  W \int_{0}^t S(t-\tau) \Big[ \sum_{n_1+n_2=n} \Psi_{n_1}(\tau)(a)\Psi_{n_2}(\tau)(a)\Big]d\tau.
\]

We set $\psi(t)(a) = S(-t) \Psi(t)(a)$ and $\psi_n(t)(a) = S(-t) \Psi_n(t)(a)$, we get
\[
\psi_0(t)(a) = a,\quad \psi_{n+1}(t)(a) = -\varepsilon  W \int_{0}^t S(-\tau) \Big[ \sum_{n_1+n_2=n} S(\tau)\Big(\psi_{n_1}(\tau)(a)\Big)\, S(\tau)\Big(\psi_{n_2}(\tau)(a)\Big)\Big]d\tau.
\]

In Fourier mode, this becomes for $\xi \in \frac1{L} \Z^*$,
\[
\hat \psi_0(t) (a)(\xi) = \hat a(\xi), \quad \hat\psi_{n+1}(t)(a) = \frac{-i\varepsilon}{\sqrt{2\pi L}} \omega(\xi) \sum_{n_1+n_2=n}\sum_{\xi_1+\xi_2 = \xi} \int_{0}^t  e^{i\Delta^\xi_{\xi_1,\xi_2}\tau} \hat\psi_{n_1}(\tau)(a)\hat\psi_{n_2}(\tau)(a)d\tau.
\]

Let $\mathcal A_n$ be the set of binary trees with $n$ nodes. We denote by $\bot$ the tree with $0$ node such that
\[
\mathcal A_0 = \{\bot\},\quad \mathcal A_1 = \{(\bot,\bot)\}
\]
etc... We recall that $\# \mathcal A_n = c_n \leq 4^n$ and that a tree with $n$ nodes has $n+1$ leaves.

\vspace{1cm}

\begin{minipage}{0,5cm}
\begin{tikzpicture}
\draw (0,0) node {$\bullet$};
\end{tikzpicture}
\[
\bot
\]
\end{minipage}\hspace{1cm}\begin{minipage}{2,5cm}
\begin{tikzpicture}
\draw (0,0) node {$\bullet$};
\draw[thick] (0,0)--(-1,-1);
\draw[thick] (0,0)--(1,-1);
\end{tikzpicture}
\[
(\bot,\bot)
\]
\end{minipage} \hspace{1cm} \begin{minipage}{3,5cm}
\begin{tikzpicture}
\draw (0,0) node {$\bullet$};
\draw (-1,-1) node {$\bullet$};
\draw[thick] (0,0)--(-1,-1);
\draw[thick] (0,0)--(1,-1);
\draw[thick] (-1,-1)--(-2,-2);
\draw[thick] (-1,-1)--(0,-2);
\end{tikzpicture}
\[
((\bot,\bot),\bot)
\]
\end{minipage}\hspace{1cm} \begin{minipage}{3,5cm}
\begin{tikzpicture}
\draw (0,0) node {$\bullet$};
\draw (-1,-1) node[below left] {$A_1$};
\draw (1,-1) node[below right] {$A_2$};
\draw[thick] (0,0)--(-1,-1);
\draw[thick] (0,0)--(1,-1);
\end{tikzpicture}
\[
(A_1,A_2)
\]
\end{minipage}

\vspace{1cm}

Above, we have represented the tree with $0$ node $\bot$, the tree with one node $(\bot,\bot)$, one tree with two nodes $((\bot,\bot),\bot)$ and the generic way of writing binary trees by induction $(A_1,A_2)$. For instance $((\bot,\bot),\bot)) = (A_1,A_2)$ with $A_1 = (\bot,\bot) $ and $A_2 = \bot$. 


We describe $\psi_n$ thanks to $\mathcal A_n$.

\begin{proposition}\label{prop:psinwithTn1} Let $n\in \N$, $a\in H^s$ and $\xi \in \frac1{L}\Z$, we have 
\[
\hat \psi_n(t)(a) (\xi) = \frac{(-i\varepsilon)^n}{(2\pi L)^{n/2}} \sum_{A\in \mathcal A_n} \sum_{\xi_1+\hdots +\xi_{n+1} = \xi} F_A(t,\xi_1,\hdots,\xi_{n+1}) \prod_{l=1}^{n+1}\hat a(\xi_l)
\]
where $F_A$ is defined by induction as $F_\bot (t,\xi) =1$ and for all $A_1 \in \mathcal A_{n_1}$, $A_2 \in \mathcal A_{n_2}$, $n_1,n_2 \in \N$, 
\[
F_{(A_1,A_2)}(t,\xi_1,\hdots,\xi_{n_1+n_2 + 2}) = \int_{0}^t \omega(\bar \xi) e^{i\Delta^{\bar \xi}_{\bar \xi_1,\bar \xi_2}\tau} F_{A_1}(\tau, \xi_1,\hdots, \xi_{n_1+1})F_{A_2}(\tau, \xi_{n_1+2},\hdots , \xi_{n_1+n_2+2})d\tau
\]
where we used the notations $\bar \xi = \sum_{l=1}^{n_1+n_2+2}\xi_l$, $\bar \xi_1 = \sum_{l=1}^{n_1 + 1} \xi_l$ and $\bar \xi_2 = \bar \xi - \bar \xi_1$.
\end{proposition}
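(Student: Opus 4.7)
The plan is a direct induction on $n$, using the recurrence for $\hat\psi_{n+1}$ in Fourier mode stated just above the proposition. The base case is immediate: $\psi_0(t)(a)=a$ gives $\hat\psi_0(t)(a)(\xi) = \hat a(\xi)$, while $\mathcal A_0 = \{\bot\}$, the constraint $\xi_1 = \xi$ leaves the single term $F_\bot(t,\xi) = 1$, and the prefactor is $1$. So the formula holds for $n=0$.

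For the inductive step, assume the identity for all $n'\leq n$, fix $n_1,n_2$ with $n_1+n_2 = n$, and substitute the inductive formulas for $\hat\psi_{n_1}(\tau)(a)(\xi_1')$ and $\hat\psi_{n_2}(\tau)(a)(\xi_2')$ into
\[
\hat\psi_{n+1}(t)(a)(\xi) = \frac{-i\varepsilon}{\sqrt{2\pi L}}\,\omega(\xi)\sum_{n_1+n_2=n}\sum_{\xi_1'+\xi_2'=\xi}\int_0^t e^{i\Delta^\xi_{\xi_1',\xi_2'}\tau}\hat\psi_{n_1}(\tau)(a)(\xi_1')\hat\psi_{n_2}(\tau)(a)(\xi_2')\,d\tau.
\]
The scalar prefactors combine as $\frac{-i\varepsilon}{\sqrt{2\pi L}}\cdot\frac{(-i\varepsilon)^{n_1}}{(2\pi L)^{n_1/2}}\cdot\frac{(-i\varepsilon)^{n_2}}{(2\pi L)^{n_2/2}} = \frac{(-i\varepsilon)^{n+1}}{(2\pi L)^{(n+1)/2}}$, which is exactly the required prefactor at order $n+1$.

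Next, relabel leaf variables so that the $n_1+1$ leaves of $A_1\in\mathcal A_{n_1}$ are $\xi_1,\dots,\xi_{n_1+1}$ with $\xi_1' = \sum_{l=1}^{n_1+1}\xi_l$ and the $n_2+1$ leaves of $A_2\in\mathcal A_{n_2}$ are $\xi_{n_1+2},\dots,\xi_{n_1+n_2+2}$ with $\xi_2' = \sum_{l=n_1+2}^{n_1+n_2+2}\xi_l$. Summing the two leaf constraints together with $\xi_1'+\xi_2'=\xi$ gives precisely $\xi_1+\cdots+\xi_{n+2} = \xi$, matching the outer sum for $n+1$ leaves.

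Finally, invoke the natural bijection $\mathcal A_{n+1}\;\simeq\;\bigsqcup_{n_1+n_2=n}\mathcal A_{n_1}\times\mathcal A_{n_2}$, which sends $(A_1,A_2)$ to the binary tree whose root has left subtree $A_1$ and right subtree $A_2$. Under this identification the inductive integrand one obtains, namely $\omega(\xi)\,e^{i\Delta^\xi_{\xi_1',\xi_2'}\tau}F_{A_1}(\tau,\xi_1,\dots,\xi_{n_1+1})F_{A_2}(\tau,\xi_{n_1+2},\dots,\xi_{n_1+n_2+2})$ integrated in $\tau$ over $[0,t]$, is exactly the integral defining $F_{(A_1,A_2)}(t,\xi_1,\dots,\xi_{n+2})$ with $\bar\xi = \xi$, $\bar\xi_1 = \xi_1'$, $\bar\xi_2 = \xi_2'$. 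Summing over $(A_1,A_2)$ recovers the sum over $\mathcal A_{n+1}$ and completes the induction. No step is subtle; the only care needed is bookkeeping indices and checking that the prefactors telescope correctly.
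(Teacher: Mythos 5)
Your proof is correct and takes essentially the same approach as the paper: a direct induction on $n$ using the Fourier-mode recurrence for $\hat\psi_{n+1}$, substitution of the inductive hypothesis, combination of the prefactors, and the standard decomposition $\mathcal A_{n+1}\simeq\bigsqcup_{n_1+n_2=n}\mathcal A_{n_1}\times\mathcal A_{n_2}$ to recognize the integrand as $F_{(A_1,A_2)}$.
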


\begin{proof} By induction on $n$. For $n=0$, we have 
\[
\hat \psi_n(t)(a) (\xi) = \hat a (\xi).
\]

Assume that the formula is true for $m\leq n$ and let us prove it for $n+1$. We have 
\[
\hat\psi_{n+1}(t)(a) = \frac{-i\varepsilon}{\sqrt{2\pi L}} \omega(\xi) \sum_{n_1+n_2=n}\sum_{\xi_1+\xi_2 = \xi} \int_{0}^t  e^{i\Delta^\xi_{\xi_1,\xi_2}\tau} \hat\psi_{n_1}(\tau)(a)\hat\psi_{n_2}(\tau)(a)d\tau.
\]
We use the induction hypothesis and get
\begin{multline*}
\hat\psi_{n+1}(t)(a) = \\
\frac{-i\varepsilon}{\sqrt{2\pi L}} \omega(\xi) \sum_{n_1+n_2=n}\sum_{\xi_1+\xi_2 = \xi} \int_{0}^t  e^{i\Delta^\xi_{\xi_1,\xi_2}\tau} \Big(\frac{(-i\varepsilon)^{n_1}}{(2\pi L)^{n_1/2}} \sum_{A_1\in \mathcal A_{n_1}} \sum_{\eta_1+\hdots +\eta_{n_1+1} = \xi} F_{A_1}(\tau,\eta_1,\hdots,\eta_{n_1+1}) \prod_{l=1}^{n_1+1}\hat a(\eta_l)\Big)\\
\times \Big(\frac{(-i\varepsilon)^{n_2}}{(2\pi L)^{n_2/2}} \sum_{A_2\in \mathcal A_{n_2}} \sum_{\eta_{n_1+2}+\hdots +\eta_{n_1+n_2+2} = \xi} F_{A_2}(\tau,\eta_{n_1+2},\hdots,\xi_{n_1+ n_2+2}) \prod_{l=n_1+2}^{n_1+n_2+2}\hat a(\eta_l)\Big)d\tau.
\end{multline*}
We recognize
\[
\hat\psi_{n+1}(t)(a) = \frac{(-i\varepsilon)^{n+1}}{(2\pi L)^{(n+1)/2}}  \sum_{n_1+n_2=n}\sum_{\eta_1+\hdots +\eta_{n+2} = \xi} \sum_{A_1\in \mathcal A_{n_1}}  \sum_{A_2\in \mathcal A_{n_2}} F_{(A_1,A_2)}(t,\eta_1,\hdots,\eta_{n+2})\prod_{l=1}^{n+2}\hat a(\eta_l).
\]
It remains to use that
\[
\mathcal A_{n+1} = \{(A_1,A_2) \; |\; \forall j=1,2,\; A_j \in \mathcal A_{n_j}, n_1+n_2 = n\}
\]
to conclude.
\end{proof}

We label the nodes in a tree $A$.

\begin{definition} Let $A \in \cup \mathcal A_n$. We define $N(A)$ the set of node labels of $A$ as
\[
N(\bot) = \emptyset, \quad N((A_1,A_2) ) = \{0\} \sqcup \{1\}\times N(A_1) \sqcup \{2\}\times N(A_2).
\]
\end{definition}

\begin{remark} We interpret this label as follows: $m\in N(A)$ is the label of the root if $m=0$, it is the label of the $m'$ node in $A_1$ if $m = (1,m')$ and it is the label of the $m'$ node in $A_2$ if $m=(2,m')$. We give two examples below.

\begin{center}
\begin{tikzpicture}
\draw (0,0) node {$\bullet$};
\draw (0,0) node [above] {$0$};
\draw (-2,-1) node {$\bullet$};
\draw (-2, -1) node[left] {$(1,0)$};
\draw (-1,-2) node {$\bullet$};
\draw (-1,-2) node [below right] {$\;\;(1,2,0)$};
\draw (2,-1) node {$\bullet$};
\draw (2,-1) node [right] {$(2,0)$};
\draw[thick] (0,0)--(-2,-1);
\draw[thick] (-2,-1)--(-3,-2);
\draw[thick] (-2,-1)--(-1,-2);
\draw[thick] (-1,-2)--(-2,-3);
\draw[thick] (-1,-2)--(0,-3);
\draw[thick] (0,0)--(2,-1);
\draw[thick] (2,-1)--(1,-2);
\draw[thick] (2,-1)--(3,-2);
\end{tikzpicture}
\end{center}

Above, we used : 

\begin{center}
\begin{tikzpicture}
\draw (0,0) node {$\bullet$};
\draw (0,0) node [above] {$0$};
\draw (1,-1) node {$\bullet$};
\draw (1,-1) node [right] {$(2,0)$};
\draw[thick] (0,0)--(-1,-1);
\draw[thick] (0,0)--(1,-1);
\draw[thick] (1,-1)--(0,-2);
\draw[thick] (1,-1)--(2,-2);
\end{tikzpicture}
\end{center}

\end{remark}

\begin{definition} Let $\leq_A$ be the (partial) order on $N(A)$ defined for $A = (A_1,A_2)$ as 
\[
m \leq_A j \quad \Leftrightarrow j=0 \textrm{ or } \exists k\in \{1,2\},\; m=(k,m'),\; j=(k,j') \textrm{ and } m'\leq_{A_k} j'.
\]

We also set for $t\geq 0$,
\[
I_A(t) = \{ (t_{m})_{m\in N(A)}\;|\;  0 \leq t_0\leq t \; \textrm{ and } \forall m\leq_A j,\; t_m \leq t_j\}.
\]
\end{definition}

\begin{remark} The order on $N(A)$ corresponds to parenthood. We explain why it is an order by induction on $n$. 

For $n=0$, we have $N(\bot) = \emptyset$. 

Assume that $\leq_A$ is an order for any $A$ in $\mathcal A_m$ with $m\leq n$, we prove that $\leq_A$ is an order for any $A \in \mathcal A_{n+1}$. Let $A = (A_1,A_2) \in \mathcal A_{n+1}$. We have that $\leq_{A_1}$ and $\leq_{A_2}$ are orders.

Reflexivity : We have that $0\leq_A 0$. Then if $m=(k,m')$ with $k=1,2$, by reflexivity of $\leq_{A_k}$, we have that $m'\leq_{A_k} m'$ and thus $m\leq_A m$.

Skew-symmetry : Let $m,j$ such that $m\leq_A j$ and $j\leq_A m$. If $j=0$ then $0 \leq_A m$ and thus $m=0=j$. Otherwise, there exists $k=1,2$ such that $j=(k,j')$ and $m=(k,m')$ and $m'\leq_{A_k}j'$ and $j'\leq_{A_k} m'$. By reflexivity of $\leq_{A_k}$, we deduce $j'=m'$ and thus $j=m$.

Transitivity : Let $m,j,l$ be such that $m\leq_A j$ and $j\leq_A l$. If $l=0$ then $l\geq_A m$. If $l\neq 0$ then there exists $k=1,2$ such that $l=(k,l')$, $j= (k,j')$ and $j'\leq_{A_k}l'$. Because $ m\leq_A j$, we deduce that $m=(k,m')$ with $m'\leq_{A_k} j'$. By transitivity of $\leq_{A_k}$, we have that $m'\leq_{A_k}l'$ and thus $m\leq_A l$.
 
\end{remark}

\begin{definition} Let $A \in \mathcal A_{n}$ for $n\geq 1$ and set $A=(A_1,A_2)$ with $A_j \in \mathcal A_{n_j}$. Let $\vec \xi = (\xi_1,\hdots, \xi_{n+1}) \in \frac1{L} \Z^{n+1}$ and set $\bar \xi = \sum_l \xi_l$, $\bar \xi_1 = \sum_{l\leq n_1+1} \xi_l$ and $\bar \xi_2 = \bar \xi - \bar \xi_l$. We define for $m\in N(A)$,
\[
\Delta_A(m, \vec \xi) =\left \lbrace{\begin{array}{cc} 
\Delta^{\bar \xi}_{\bar \xi_1,\bar \xi_2} & \textrm{ if } m=0\\
\Delta_{A_1}(m', \xi_1,\hdots ,\xi_{n_1+1}) & \textrm{ if } m=(1,m') \\
\Delta_{A_2}(m', \xi_{n_1+2},\hdots, \xi_{n+1}) & \textrm{ if } m=(2,m')
\end{array}} \right.
\]
and
\[
\omega_{ A}(m,\vec \xi) =  \left \lbrace{\begin{array}{cc} 
\omega(\bar \xi) & \textrm{ if } m=0\\
\omega_{A_1}(m', \xi_1,\hdots ,\xi_{n_1+1}) & \textrm{ if } m=(1,m') \\
\omega_{A_2}(m', \xi_{n_1+2},\hdots, \xi_{n+1}) & \textrm{ if } m=(2,m')
\end{array}} \right. .
\]
\end{definition}

\begin{remark} The quantities $\Delta_A(m,\vec \xi)$ and $\omega_A(m,\vec \xi)$ are the $\omega$ and $\Delta$ that appear when we do the interaction described by the node $m$.
\end{remark}

\begin{proposition} We have, with the above notations for all $A\in \mathcal A_n$, $n\geq 1$, $t\geq 0$,
\[
F_A(t,\vec \xi) = \int_{I_A(t)} \prod_{m \in N(A)} \Big( \omega_{A}(m,\vec \xi) e^{i\Delta_A(m,\vec \xi)t_m}dt_m\Big)
\]
\end{proposition}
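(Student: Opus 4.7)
The plan is to argue by induction on the number of nodes $n$ of $A$. The base case $n=0$ is trivial: $N(\bot) = \emptyset$, so the right-hand side is the empty product integrated over the empty time domain, which equals $1 = F_\bot(t,\xi)$.

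For the inductive step, write $A = (A_1, A_2)$ with $A_j \in \mathcal A_{n_j}$ and $n_1 + n_2 = n$, and split $\vec\xi$ into $\vec\xi^{(1)} = (\xi_1, \ldots, \xi_{n_1+1})$ and $\vec\xi^{(2)} = (\xi_{n_1+2}, \ldots, \xi_{n+2})$. Unfolding the recursive definition of $F_{(A_1,A_2)}$ from Proposition \ref{prop:psinwithTn1} and recognising $\omega(\bar\xi) = \omega_A(0,\vec\xi)$ and $\Delta^{\bar\xi}_{\bar\xi_1,\bar\xi_2} = \Delta_A(0,\vec\xi)$ at the root, I would rewrite it as
\[
F_A(t, \vec\xi) = \int_0^t \omega_A(0, \vec\xi)\, e^{i \Delta_A(0, \vec\xi) t_0}\, F_{A_1}(t_0, \vec\xi^{(1)})\, F_{A_2}(t_0, \vec\xi^{(2)})\, dt_0.
\]
Plugging the induction hypothesis for $F_{A_1}$ and $F_{A_2}$, relabeling the dummy time variables as $t_{(k,m')}$, and using the identities $\omega_A((k, m'), \vec\xi) = \omega_{A_k}(m', \vec\xi^{(k)})$ and $\Delta_A((k, m'), \vec\xi) = \Delta_{A_k}(m', \vec\xi^{(k)})$, each factor matches the corresponding one in the claimed product over $N(A) = \{0\} \sqcup \{1\}\times N(A_1) \sqcup \{2\}\times N(A_2)$.

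What remains is to identify the integration domain. From the inductive definition of $\leq_A$ the root $0$ is the maximum, the restriction of $\leq_A$ to $\{k\}\times N(A_k)$ agrees with $\leq_{A_k}$ through the natural bijection, and nodes lying in distinct subtrees are pairwise incomparable. Consequently $I_A(t)$ factorises as the set of tuples $(t_0, (t_{(1,m')})_{m' \in N(A_1)}, (t_{(2,m')})_{m' \in N(A_2)})$ satisfying $t_0 \in [0,t]$, $(t_{(1,m')}) \in I_{A_1}(t_0)$, and $(t_{(2,m')}) \in I_{A_2}(t_0)$, which is exactly the nested domain produced by the induction. Fubini's theorem then finishes the argument. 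The only (mild) obstacle is this combinatorial bookkeeping verifying the product structure of $I_A(t)$ from the recursive definition of $\leq_A$; no analytic estimate or cancellation is needed.
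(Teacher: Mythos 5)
Your proof is correct and follows essentially the same route as the paper: induction on the number of nodes, unfolding the recursive definition of $F_{(A_1,A_2)}$, invoking the induction hypothesis on the two subtrees, and matching the integration domains via the recursive structure of $\leq_A$. The only cosmetic difference is that you take $n=0$ as the base case (interpreting the empty product over $N(\bot)=\emptyset$ as $1$), whereas the paper starts at $n=1$ with $A=(\bot,\bot)$.
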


\begin{proof} We proceed by induction on $n$. If $n=1$, then $A = (\bot,\bot)$. We have $N(A) = \{0\}$, $I_A(t) = [0,t]$, $\omega_{A}(0,\xi_1,\xi_2) = \omega (\xi_1+\xi_2) = \omega(\bar \xi)$ and finally, $\Delta_A(0,\xi_1,\xi_2) = \Delta^{\xi_1+\xi_2}_{\xi_1,\xi_2} = \Delta^{\bar \xi}_{\bar \xi_1,\bar \xi_2}$, and the above equality corresponds to
\[
F_A(t,\vec \xi) = \int_{0}^t   \omega(\bar \xi) e^{i\Delta^{\bar \xi}_{\xi_1,\xi_2}t_0}dt_0
\]
which is true.

Assume the equality is valid for $m\leq n$. We claim that it is valid for $n+1$. By definition, we have, setting $A=(A_1,A_2)$,
\[
F_{A}(t,\vec \xi) = \int_{0}^t \omega(\bar \xi) e^{i\Delta^{\bar \xi}_{\bar \xi_1,\bar \xi_2}\tau} F_{A_1}(\tau, \xi_1,\hdots, \xi_{n_1+1})F_{A_2}(\tau, \xi_{n_1+2},\hdots , \xi_{n_1+n_2+2})d\tau .
\]
We recognize
\[
F_{A}(t,\vec \xi) = \int_{0}^t \omega_{A}(0,\vec \xi) e^{i\Delta_A(0,\vec \xi)\tau} F_{A_1}(\tau, \xi_1,\hdots, \xi_{n_1+1})F_{A_2}(\tau, \xi_{n_1+2},\hdots , \xi_{n_1+n_2+2})d\tau .
\]
We use the induction hypothesis and get
\begin{multline*}
F_{A}(t,\vec \xi) = \int_{0}^t \omega_{A}(0,\vec \xi) e^{i\Delta_A(0,\vec \xi)\tau} \int_{I_{A_1}(\tau)} \prod_{m \in N(A_1)} \Big( \omega_{A_1}(m,\vec \xi_1) e^{i\Delta_{A_1}(m,\vec \xi_1)s_m}ds_m\Big)\\
\int_{I_{A_2}(\tau)} \prod_{m \in N(A_2)} \Big( \omega_{A_2}(m,\vec \xi_2) e^{i\Delta_{A_2}(m,\vec \xi_2)s'_m}ds'_m\Big)d\tau 
\end{multline*}
where $\vec \xi_1 = (\xi_1,\hdots,\xi_{n_1+1})$, $\vec \xi_2 = (\xi_{n_1+2},\hdots,\xi_{n+2})$.

Setting $t_{(1,m)} = s_m$, $t_{(2,m)} = s'_m$ and $t_0 = \tau$, we get
\begin{multline*}
F_{A}(t,\vec \xi) = 
\int_{0}^t dt_0  \omega_{A}(0,\vec \xi) e^{i\Delta_A(0,\vec \xi)t_0} \int_{I_{A_1}(t_0)} \prod_{m \in N(A_1)} \Big( \omega_{A}((1,m),\vec \xi) e^{i\Delta_{A}((1,m),\vec \xi)t_{(1,m)}}dt_{(1,m)}\Big)\\
\times \int_{I_{A_2}(t_0)} \prod_{m \in N(A_2)} \Big( \omega_{A}((2,m),\vec \xi)e^{i\Delta_{A}((2,m),\vec \xi)t_{(2,m)}}dt_{(2,m)}\Big)d\tau .
\end{multline*}
We recall that
\[
I_A(t) = \{ (t_m)_{m\in N(A)} \;| \; \forall m\leq_A j, \; t_m \leq t_j\}.
\]
And that $m\leq_A j$ iff $j=0$ or $m=(k,m')$ and $j=(k,j')$ and $m'\leq_{A_k} j'$ for $k=1$ or $k=2$. We deduce
\[
I_A(t) = \{(t_m)_{m\in N(A)} \; |\; t_0 \in [0,t]\, \wedge \, (t_{(1,m)})_{m\in N(A_1)} \in I_{A_1}(t_0) \, \wedge \, (t_{(2,m)})_{m\in N(A_2)} \in I_{A_2}(t_0)\}.
\]
Therefore, 
\[
F_{A}(t,\vec \xi) = \int_{I_A(t)} \prod_{m\in N(A)}dt_m  \prod_{m\in N(A)} (\omega_{A}(m,\vec \xi) e^{i\Delta_A(m,\vec \xi)t_m})  .
\]
\end{proof}

\begin{notation} For a given tree $A \in \mathcal A_n$, $n\geq 0$, we set $\mathfrak S_A$ the set of bijections $\sigma$ from $[|1,n|]$ to $N(A)$ such that
\[
\forall (m,j) \in N(A), \; m\leq_A j \Rightarrow \sigma^{-1}(m) \leq \sigma^{-1}(j).
\]
By convention, we have that $\mathfrak S_\bot = \{\emptyset\}$ where $\emptyset$ is the empty map $\emptyset : \emptyset \rightarrow \emptyset$.
\end{notation}

\begin{remark} The set $\mathfrak{S}_A$ is the set of all the total orders on $N(A)$ that respect the partial order $\leq_A$. Below, we give an example of all the elements of $\mathfrak S_A$ for $A=((\bot,(\bot,\bot)),(\bot,\bot))$.

\begin{center}
\begin{tikzpicture}
\draw (0,0) node {$\bullet$};
\draw (0,0) node [above] {$\sigma_1(4)= \sigma_2(\textcolor{red}{4}) = \sigma_3(\textcolor{blue}{4}) = 0$};
\draw (-2,-1) node {$\bullet$};
\draw (-2, -1) node[left] {$\sigma_1(3)= \sigma_2(\textcolor{red}{3}) = \sigma_3(\textcolor{blue}{2})= (1,0)$};
\draw (-1,-2) node {$\bullet$};
\draw (-1,-2) node [below right] {$\;\;\sigma_1(2)= \sigma_2(\textcolor{red}{1}) = \sigma_3(\textcolor{blue}{1})=(1,2,0)$};
\draw (2,-1) node {$\bullet$};
\draw (2,-1) node [right] {$\sigma_1(1)= \sigma_2(\textcolor{red}{2}) = \sigma_3(\textcolor{blue}{3})=(2,0)$};
\draw[thick] (0,0)--(-2,-1);
\draw[thick] (-2,-1)--(-3,-2);
\draw[thick] (-2,-1)--(-1,-2);
\draw[thick] (-1,-2)--(-2,-3);
\draw[thick] (-1,-2)--(0,-3);
\draw[thick] (0,0)--(2,-1);
\draw[thick] (2,-1)--(1,-2);
\draw[thick] (2,-1)--(3,-2);
\end{tikzpicture}
\end{center}

A priori, if we do not require that $\sigma$ has to respect the parenthood order, then we can choose $4!$ different bijections from $[|1,4|]$ to $N(A)$. But $\sigma^{-1}(0)$ has to be bigger than any of the other which imposes $\sigma^{-1}(0) = 4$. Then, $(1,0)$ is bigger than $(1,2,0)$ but $(2,0)$ is not comparable to $(1,0)$ and $(1,2,0)$ which means that we can choose any value among $\{,2,3\}$ for $\sigma^{-1}(2,0)$ and that sets completely $\sigma$. Therefore, we have three elements in $\mathfrak S_A$.
\end{remark}

\begin{proposition} We have for all $A \in \mathcal A_n$, $n\geq 1$, $t\geq 0$,
\[
F_A(t,\vec \xi) = \sum_{\sigma \in \mathfrak S_A} F_A^\sigma (t,\vec \xi)
\]
with 
\[
F_A^\sigma(t,\vec \xi) =  \int_{0\leq t_1 \leq \hdots \leq t_n\leq t} \prod_{m=1}^n dt_m  \prod_{m=1}^n (\omega_{A}(\sigma(m),\vec \xi) e^{i\Delta_A(\sigma(m),\vec \xi)t_m}) .
\]
\end{proposition}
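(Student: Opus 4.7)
The plan is to use the integral representation
\[
F_A(t,\vec \xi) = \int_{I_A(t)} \prod_{m \in N(A)} \omega_A(m,\vec\xi) e^{i\Delta_A(m,\vec\xi)t_m}\, dt_m
\]
established in the previous proposition, and decompose the integration polytope $I_A(t)$ as a disjoint union (modulo Lebesgue null sets) of simplices indexed by the linear extensions $\sigma \in \mathfrak{S}_A$ of the parenthood order $\leq_A$.

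Concretely, for each $\sigma \in \mathfrak{S}_A$ I would introduce
\[
S_\sigma(t) = \{(t_m)_{m\in N(A)} : 0 \leq t_{\sigma(1)} \leq t_{\sigma(2)} \leq \cdots \leq t_{\sigma(n)} \leq t\}
\]
and check first that $S_\sigma(t) \subseteq I_A(t)$. Since the root $0$ is the unique maximal element for $\leq_A$ and $\sigma$ respects $\leq_A$, we have $\sigma(n) = 0$, so $t_0 = t_{\sigma(n)} \leq t$; moreover $m \leq_A j$ implies $\sigma^{-1}(m) \leq \sigma^{-1}(j)$ and hence $t_m \leq t_j$.

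Conversely, I would show that any $(t_m) \in I_A(t)$ with pairwise distinct coordinates belongs to exactly one $S_\sigma(t)$. Sorting the coordinates in increasing order defines a unique bijection $\sigma : [|1,n|] \to N(A)$ by $t_{\sigma(1)} < \cdots < t_{\sigma(n)}$, and the defining constraint $m \leq_A j \Rightarrow t_m \leq t_j$ forces $\sigma^{-1}(m) \leq \sigma^{-1}(j)$, i.e.\ $\sigma \in \mathfrak{S}_A$. The complement in $I_A(t)$ of $\bigsqcup_\sigma S_\sigma(t)$, as well as the pairwise intersections $S_\sigma(t) \cap S_{\sigma'}(t)$ for $\sigma \neq \sigma'$, are contained in the finite union of hyperplanes $\{t_m = t_j\}$ and therefore have Lebesgue measure zero. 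This is the only piece requiring care, and it amounts to the classical fact that the chamber cut out of $\mathbb{R}^{N(A)}$ by a finite poset is tiled by simplices corresponding to its linear extensions.

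Given this decomposition, the integral over $I_A(t)$ splits as a sum over $\sigma \in \mathfrak{S}_A$ of integrals over $S_\sigma(t)$. In each of these I would perform the change of variables $s_k = t_{\sigma(k)}$, which is a permutation of coordinates and hence has Jacobian one; the domain becomes the standard simplex $\{0 \leq s_1 \leq \cdots \leq s_n \leq t\}$. Because $\sigma$ is a bijection from $[|1,n|]$ onto $N(A)$, the product over $m \in N(A)$ rewrites as a product over $k \in [|1,n|]$ with $m = \sigma(k)$ and $t_m = s_k$, which is precisely $F_A^\sigma(t,\vec\xi)$. Summing over $\sigma \in \mathfrak{S}_A$ delivers the claimed identity.
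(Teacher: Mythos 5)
Your proposal is correct and follows essentially the same route as the paper's proof: decompose $I_A(t)$ into the simplices $I_A^\sigma(t)$ indexed by linear extensions $\sigma\in\mathfrak S_A$, observe that pairwise intersections have Lebesgue measure zero, and then relabel variables by $\sigma$ to obtain the ordered integrals $F_A^\sigma$. You supply slightly more detail (one-sided inclusions, sorting argument, hyperplane union) than the paper's terser statement, but the substance is identical.
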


\begin{proof} We have 
\[
I_A(t) = \bigcup_{\sigma \in \mathfrak S_A} I_A^\sigma(t)
\]
where 
\[
I_A^\sigma (t) = \{ (t_m)_{m \in N(A)} \; |\; 0\leq t_{\sigma(1)} \leq \hdots \leq t_{\sigma(n)}\leq t \}.
\]
Indeed, the $(t_m)_{m\in N(A)}$ might be in any (total) order that respect the (partial order) of parenthood. 

We have that $I_A^{\sigma}(t) \cap I_A^{\sigma'}(t)$ is of null Lebesgue measure if $\sigma\neq \sigma'$, therefore,
\[
F_A(t,\vec \xi) = \sum_{\sigma \in \mathfrak S_A} F_A^\sigma (t,\vec \xi)
\]
with 
\[
F_A^\sigma(t,\vec \xi) = \int_{I^\sigma_A(t)} \prod_{m\in N(A)}dt_m  \prod_{m\in N(A)} (\omega_{A}(m,\vec \xi) e^{i\Delta_A(m,\vec \xi)t_m})  .
\]
We rename the variables $t_m = t_{\sigma(m)}$ to get the result.
\end{proof}

\subsection{Estimates on \texorpdfstring{$\Psi_n$}{Psin}}\label{subsec:estimates}

\begin{proposition}\label{prop:estFsigma} There exists $C$ such that for all $n\in \N$, for all $L \geq 1$,for all $R\geq 1$, there exists $C(L,R,n)$ such that for all $t\geq 0$, for all $A \in \mathcal A_n$, $\vec \xi \in ([-R,R] \cap \frac1{L} \Z)^{n+1}$, and for all $\sigma \in \mathfrak S_A$, we have 
\[
|F_A^\sigma(t,\vec \xi)| \leq C^n L^{2n}t^{(n-1)/2} C(L,R,n) 
\]
if $n$ is odd,
\begin{multline*}
\Big| F_A^\sigma(t,\vec \xi) -  \prod_{m=1}^{n/2} \frac{\omega_{A}(\sigma(2m),\vec \xi)\omega_{A}(\sigma(2m-1),\vec \xi)}{i\Delta_{A}(\sigma(2m-1),\vec \xi)}\delta\Big(\Delta_{A}(\sigma(2m-1),\vec \xi)+\Delta_{A}(\sigma(2m),\vec\xi)\Big) \frac{t^{n/2}}{(n/2)!} \Big| \\
\leq C^n L^{2n}t^{(n-2)/2}C(L,R,n)
\end{multline*}
\end{proposition}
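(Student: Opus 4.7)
My plan is to prove both estimates by induction on $n$, exploiting the explicit integral representation of $F_A^\sigma$. Write $\omega_m := \omega_A(\sigma(m),\vec\xi)$ and $\Delta_m := \Delta_A(\sigma(m),\vec\xi)$. Using the identity $\Delta_m = -\omega_m\cdot \omega(\bar\xi_a)\omega(\bar\xi_b)\bigl(3+(\bar\xi_a^2+\bar\xi_b^2+\bar\xi^2)/2\bigr)$ from the earlier proposition, combined with the lower bound $|\omega(\eta)|\geq 1/(L\an{\eta})$ valid on $\frac{1}{L}\Z^*$, the ratio $\omega_m/\Delta_m$ is bounded by $CL^2$ uniformly on $|\vec\xi|\leq R$; this is the origin of the $L^{2n}$ factor in the claimed bound.

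\medskip

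The base cases are direct computations. For $n=1$, $F_A^\sigma(t,\vec\xi) = \omega_1(e^{i\Delta_1 t}-1)/(i\Delta_1)$ is bounded by $2|\omega_1/\Delta_1|\leq CL^2$, which matches the claim with $t^{(n-1)/2}=1$. For $n=2$,
\[
F_A^\sigma = \frac{\omega_1\omega_2}{i\Delta_1}\Bigl[\frac{e^{i(\Delta_1+\Delta_2)t}-1}{i(\Delta_1+\Delta_2)} - \frac{e^{i\Delta_2 t}-1}{i\Delta_2}\Bigr];
\]
when $\Delta_1+\Delta_2=0$ the first bracket equals $t$, yielding the leading term $t\cdot\omega_1\omega_2/(i\Delta_1)$, with a remainder that is uniformly bounded in $t$.

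\medskip

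For the inductive step, I would peel off the two innermost integrations $(t_1,t_2)$ and split according to whether $\Delta_1+\Delta_2$ vanishes. When it does, the double integral contributes $t_3/(i\Delta_1)$ plus a bounded correction, and this linear factor in $t_3$ is then combined with the next pair of integrations $(t_3,t_4)$ in order to iterate the scheme. After $k$ successive pair cancellations (in the order prescribed by $\sigma$), one accumulates the factor $t^k/k!$, the factorial arising from the nested simplex structure, times $\prod_{m=1}^k \omega_{2m-1}\omega_{2m}/(i\Delta_{2m-1})$. For even $n$, the maximal value $k=n/2$ produces exactly the stated leading term; for odd $n$, the maximum is $k=(n-1)/2$, with the unpaired final integration contributing only a bounded oscillatory factor, producing the announced bound $t^{(n-1)/2}$.

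\medskip

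The main obstacle is to confirm that every subdominant configuration fits within the claimed remainder bound of order $t^{(n-2)/2}$: each non-resonant pair $\Delta_{2m-1}+\Delta_{2m}\neq 0$ costs one power of $t$, since its double integral becomes uniformly bounded instead of linearly growing. Doing this cleanly requires a uniform lower bound $|\Sigma_j|\geq c(L,R,n)>0$ for every nonvanishing partial sum $\Sigma_j := \sum_{m\geq j}\Delta_m$; this holds by a finiteness argument since $\vec\xi$ ranges over the finite set $([-R,R]\cap\frac{1}{L}\Z^*)^{n+1}$, and this bound is precisely what the constant $C(L,R,n)$ absorbs. A conceptually equivalent and arguably cleaner formulation proceeds through the Laplace transform
\[
\int_0^\infty e^{-zt}F_A^\sigma(t,\vec\xi)\, dt = \prod_{m=1}^n \omega_m \cdot \Bigl[z \prod_{j=1}^n (z-i\Sigma_j)\Bigr]^{-1},
\]
whose inversion by partial fractions expresses $F_A^\sigma$ as a finite sum of $t^\ell e^{i\Sigma t}$ terms: polynomial growth in $t$ comes exclusively from the pole at $z=0$, whose order equals $1+\#\{j:\Sigma_j=0\}$, and the pairing $\Delta_{2m-1}+\Delta_{2m}=0$ for all $m$ is exactly what forces the $n/2$ odd-indexed $\Sigma_{2m-1}$ to vanish, producing the $t^{n/2}/(n/2)!$ leading term with the stated coefficient.
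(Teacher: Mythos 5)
Your proposal gets the mechanism right — the leading $t^{n/2}$ comes from all consecutive pairs $\Delta_{2m-1}+\Delta_{2m}$ vanishing, each non-resonant pair costs a power of $t$, and the constant $C(L,R,n)$ absorbs a uniform small-divisor lower bound by finiteness — but the two concrete routes you offer differ from the paper's and are not both equally watertight.

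The paper integrates by parts at the \emph{top} of the simplex, writing
$F_k(t)=\omega_k\frac{e^{i\Delta_k t}}{i\Delta_k}F_{k-1}(t)-\int_0^t\omega_k\omega_{k-1}\frac{e^{i(\Delta_k+\Delta_{k-1})t_k}}{i\Delta_k}F_{k-2}(t_k)\,dt_k$,
so the inner object is literally $F_{k-2}$ and the induction hypothesis applies verbatim; the resonance $\Delta_k+\Delta_{k-1}=0$ appears cleanly in the surviving integral. Your first route peels off the \emph{innermost} pair $(t_1,t_2)$ instead. In the resonant case this produces a residual factor $t_3$ inside the remaining $(n-2)$-fold simplex integral, which is no longer of $F_{n-2}$-type. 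You say this "is then combined with the next pair of integrations," but that is precisely where the bookkeeping becomes non-trivial: the polynomial degree must be tracked through every subsequent integration, and the induction hypothesis as you have stated it doesn't cover integrands carrying explicit polynomial weights. That step is a genuine gap in the first version of your argument; it can surely be fixed, but not for free.

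The Laplace-transform formulation is a genuinely different and cleaner route, and your formula $\tilde F^\sigma_A(z)=\prod_m\omega_m\cdot\bigl[z\prod_{j=1}^n(z-i\Sigma_j)\bigr]^{-1}$ with $\Sigma_j=\sum_{m\ge j}\Delta_m$ is correct (it follows from $z\tilde F_k(z)=\omega_k\tilde F_{k-1}(z-i\Delta_k)$), and the identification of the leading coefficient from the pole at $z=0$ matches the paper's $\tilde F^\sigma_A$. One caveat: to bound the partial-fraction remainder by $t^{(n-2)/2}$ uniformly, you need lower bounds not only on the nonzero $\Sigma_j$ but also on the nonzero differences $\Sigma_j-\Sigma_{j'}=\sum_{m=j}^{j'-1}\Delta_m$, which for $|j-j'|>2$ are sums of more than two $\Delta$'s and so are not literally covered by the paper's lemma; the same finiteness argument supplies them, but you should state a slightly strengthened version of the lemma to make this clean. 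With that adjustment the Laplace route is a valid, arguably more transparent, alternative proof; I'd drop or at least deprioritize the inner-peeling sketch.
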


To prove this proposition, we use the following lemma.

\begin{lemma} Let $L\geq 1$, $R\geq 0$, $n\geq 2$, there exists $C(R,L,n) \geq 1$ such that for all $A\in \mathcal A_n$, $m_1\neq m_2\in N(A)$, and let $\vec \xi \in \Big[ [-R,R]\cap \frac1{L}\Z\Big]^{n+1}$, we have either
\[
\Delta_A(m_1,\vec \xi) + \Delta_A(m_2,\vec \xi) = 0,
\]
or
\[
|\Delta_A(m_1,\vec \xi) + \Delta_A(m_2,\vec \xi)| \geq C(L,R,n)^{-1}.
\]
\end{lemma}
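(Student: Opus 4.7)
The plan is to argue by finiteness. The key observation is that all the data the quantity $\Delta_A(m_1,\vec\xi)+\Delta_A(m_2,\vec\xi)$ depends on range over finite sets once $L$, $R$ and $n$ are fixed.

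More precisely, first I would note that the set $[-R,R]\cap\frac{1}{L}\Z$ has cardinality at most $2RL+1$, so $\bigl[[-R,R]\cap\frac{1}{L}\Z\bigr]^{n+1}$ is a finite set. Second, the collection $\mathcal A_n$ of binary trees with $n$ nodes has cardinality $c_n$, which is finite, and for each such tree $A$ the set of node labels $N(A)$ has cardinality $n$, so the number of ordered pairs $(m_1,m_2)$ with $m_1\neq m_2$ in $N(A)$ is at most $n(n-1)$. Consequently the set
\[
S(L,R,n)=\bigl\{\Delta_A(m_1,\vec\xi)+\Delta_A(m_2,\vec\xi)\ :\ A\in\mathcal A_n,\ m_1\neq m_2\in N(A),\ \vec\xi\in\bigl[[-R,R]\cap\tfrac{1}{L}\Z\bigr]^{n+1}\bigr\}
\]
is a finite subset of $\R$.

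Finally, I would define
\[
C(L,R,n)=\max\Bigl(1,\ \bigl(\min\{|x|\ :\ x\in S(L,R,n),\ x\neq 0\}\bigr)^{-1}\Bigr),
\]
with the convention that the minimum over the empty set is $+\infty$ (in which case the second argument is $0$ and $C(L,R,n)=1$). Since $S(L,R,n)$ is finite, this minimum is attained and is strictly positive whenever it is taken over a non-empty set, so $C(L,R,n)$ is well-defined and finite. For any admissible choice of $A$, $m_1\neq m_2$, $\vec\xi$, the value $\Delta_A(m_1,\vec\xi)+\Delta_A(m_2,\vec\xi)$ belongs to $S(L,R,n)$, hence is either $0$ or of absolute value at least $C(L,R,n)^{-1}$, as required. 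There is essentially no obstacle here; the only mild subtlety is to observe that $\Delta_A(m,\vec\xi)$ is, by construction, a rational function of the components of $\vec\xi$ evaluated at rationals in $\frac{1}{L}\Z$, so the set $S(L,R,n)$ is indeed a finite set of real numbers and the compactness argument is legitimate.
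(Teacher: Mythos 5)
Your proof is correct and takes essentially the same approach as the paper, which simply observes that the set of tuples $(A,m_1,m_2,\vec\xi)$ is finite once $L$, $R$, $n$ are fixed, so the nonzero values of $|\Delta_A(m_1,\vec\xi)+\Delta_A(m_2,\vec\xi)|$ are bounded away from zero. You have merely spelled out the finiteness argument in more detail and made the definition of $C(L,R,n)$ explicit.
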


\begin{proof} It is sufficient to see that if $L,R,n$ are fixed, then the set
\[
\{ (A,m_1,m_2,\vec \xi) \;| \; A\in \mathcal A_n,\, m_1\neq m_2 \in N(A),\, \vec \xi \in \Big[ [-R,R]\cap \frac1{L}\Z\Big]^{n+1}\}
\]
is finite.

\end{proof}

\begin{proof}[Proof of Proposition \ref{prop:estFsigma}] 

To lighten the notations, we set $\omega_k = \omega_{A}(\sigma(k),\vec \xi)$, $\Delta_k = \Delta_A(\sigma(k),\vec \xi)$ and 
\[
F_k(t) = \int_{0\leq t_1\leq \hdots \leq t_k\leq t} \prod_{m=1}^k \omega_m e^{i\Delta_m t_m}.
\]
We have that
\[
F_A^\sigma(t,\vec \xi) = F_n(t).
\]

If $t\leq 1$, then 
\[
|F_k(t) |\leq \frac{t^k}{k!} 
\]
from which we deduce that
\[
|F_k(t)|\leq C^k t^{(k-1)/2}L^{2k} C(L,R,n) 
\]
if $k$ is odd.

If $k$ is even, we remark that 
\[
\Big|\frac{\omega_m}{\Delta_m} \Big| \leq 4L^2.
\] 
Since we have
\[
\Big|\prod_{m=1}^{k/2} \frac{\omega_{2m} \omega_{2m-1}\delta(\Delta_{2m-1} + \Delta_{2m})}{i\Delta_{2m-1}}\Big|\leq \frac{t^{k/2}}{(k/2)!} L^k,
\]
we deduce that
\[
\Big|F_k(t) - \frac{t^{k/2}}{(k/2)!} \prod_{m=1}^{k/2} \frac{\omega_{2m} \omega_{2m-1}\delta(\Delta_{2m-1} + \Delta_{2m})}{i\Delta_{2m-1}}\Big| \leq C^k t^{(k-2)/2}L^{2k} C(L,R,n) 
\]
if $k$ is even.

If $t\geq 1$, we prove by induction on $k$ that
\[
|F_k(t)| \leq C^k t^{(k-1)/2}L^{2k} C(L,R,n) 
\]
if $k$ is odd and
\[
\Big|F_k(t) -\frac{t^{k/2}}{(k/2)!}  \prod_{m=1}^{k/2} \frac{\omega_{2m} \omega_{2m-1}\delta(\Delta_{2m-1} + \Delta_{2m})}{i\Delta_{2m-1}}\Big| \leq C^k t^{(k-2)/2}L^{2k} C(L,R,n) 
\]
if $k$ is even.

If $k=0$, we have $F_k(t) = 1$ and 
\[
\prod_{m=1}^{k/2} \frac{\omega_{2m} \omega_{2m-1}\delta(\Delta_{2m-1} + \Delta_{2m})}{i\Delta_{2m-1}} = 1
\]
hence the result is true.

If $k=1$, then 
\[
|F_k(t)| = \Big|\omega_1 \frac{e^{i\Delta_1 t}-1}{\Delta_1} \Big|\leq 8L^2
\]
which makes the result true if $C\geq 8$.

If $k$ is even, we write
\[
G_k = \frac1{(k/2)!}\prod_{m=1}^{k/2} \frac{\omega_{2m} \omega_{2m-1}\delta(\Delta_{2m-1} + \Delta_{2m})}{i\Delta_{2m-1}}.
\]
We have 
\[
|G_k| \leq 4^{k/2} L^k.
\]

Let $k\geq 2$, we have 
\[
F_k(t) = \int_{0}^t dt_k \omega_k e^{i\Delta_k t_k} F_{k-1}(t_k).
\]
We integrate by parts and get
\[
F_k(t) = [\omega_k \frac{e^{i\Delta_k t_k}}{i\Delta_k}F_{k-1}(t_k)]_0^t - \int_{0}^t \omega_k \frac{e^{i\Delta_k t_k}}{i\Delta_k} F'_{k-1}(t_k) dt_k.
\]
This yields
\[
F_k(t) = \omega_k \frac{e^{i\Delta_k t}}{i\Delta_k}F_{k-1}(t)-\int_{0}^t  \omega_k \omega_{k-1}\frac{e^{i(\Delta_k + \Delta_{k-1}) t_k}}{i\Delta_k} F_{k-2}(t_k) dt_k.
\]

Assume that $k$ is even. Then $k-1$ is odd and we get
\[
\Big| \omega_k \frac{e^{i\Delta_k t}}{i\Delta_k}F_{k-1}(t) \Big| \leq 4 L^2  C^{k-1}t^{(k-2)/2}L^{2(k-1)} C(L,R,n) ,
\]
that is
\[
\Big| \omega_k \frac{e^{i\Delta_k t}}{i\Delta_k}F_{k-1}(t) \Big| \leq 4   C^{k-1} t^{(k-2)/2}L^{2k} C(L,R,n) ,
\]
We divide 
\[
\int_{0}^t  \omega_k \omega_{k-1}\frac{e^{i(\Delta_k + \Delta_{k-1}) t_k}}{i\Delta_k} F_{k-2}(t_k) dt_k = I+II
\]
with
\[
I = \int_{0}^t  \omega_k \omega_{k-1}\frac{e^{i(\Delta_k + \Delta_{k-1}) t_k}}{i\Delta_k} (F_{k-2}(t_k) - G_{k-2}t_k^{(k-2)/2}) dt_k
\]
and
\[
II = \int_{0}^t  \omega_k \omega_{k-1}\frac{e^{i(\Delta_k + \Delta_{k-1}) t_k}}{i\Delta_k} G_{k-2}t_k^{(k-2)/2} dt_k.
\]
We have that
\[
|F_{k-2}(t_k) - G_{k-2}t_k^{(k-2)/2}| \leq C^{k-2} t^{(k-4)/2}L^{2(k-2)} C(L,R,n) 
\]
from which we get
\[
I \leq 4 C^{k-2} t^{(k-2)/2}L^{2(k-1)}  C(L,R,n) 
.
\]
For $II$, if $\Delta_k + \Delta_{k-1} = 0$, then
\[
II = G_k t^{k/2}.
\]
Otherwise, if $k=2$, we have 
\[
II \leq 4L^2 \frac2{|\Delta_2 + \Delta_1|} \leq 4 L^2 C(L,R,n)
\]
and if $k>2$, we can integrate by parts and get
\[
II = [\omega_k \omega_{k-1}\frac{e^{i(\Delta_k + \Delta_{k-1}) t_k}}{-\Delta_k(\Delta_k + \Delta_{k-1})}G_{k-2}t_k^{(k-2)/2}]_0^t + \int_{0}^t \omega_k \omega_{k-1}\frac{e^{i(\Delta_k + \Delta_{k-1}) t_k}}{\Delta_k(\Delta_k + \Delta_{k-1})}G_{k-2}\frac{k-2}{2}t_k^{(k-4)/2}dt_k.
\]
We have
\[
|G_{k-2}| \leq 4^{(k-2)/2} L^{k-2}, \quad \Big|\omega_k \omega_{k-1}\frac{e^{i(\Delta_k + \Delta_{k-1}) t_k}}{\Delta_k(\Delta_k + \Delta_{k-1})} \Big| \leq 4 L^2 C(L,R,n)
\]
from which we deduce
\[
II \leq 2\cdot 4^{k/2} L^k C(L,R,n) t^{(k-2)/2}.
\]
Summing up, we get
\[
|F_k(t)| \leq 4 C^{k-2} t^{(k-2)/2}L^{2(k-2)} C(L,R,n)  + 2\cdot 4^{k/2} L^{k} C(L,R,n)
\]
and therefore
\[
|F_k(t)| \leq \Big( 4C^{k-2} + 2^{k+1} \Big) L^{2k}   C(L,R,n) .
\]
If $C^k \geq 4C^{k-2} + 2^{k+1}$, for instance if $C\geq 2\sqrt 3$, we get the result.

Assume now that $k\geq 3$ is odd. We still have
\[
F_k(t) = \omega_k \frac{e^{i\Delta_k t}}{i\Delta_k}F_{k-1}(t)-\int_{0}^t  \omega_k \omega_{k-1}\frac{e^{i(\Delta_k + \Delta_{k-1}) t_k}}{i\Delta_k} F_{k-2}(t_k) dt_k.
\]
We have that $k-1$ is even, and therefore,
\[
\Big| \omega_k \frac{e^{i\Delta_k t}}{i\Delta_k}F_{k-1}(t) \Big| \leq 4L^2 \Big( C^{k-1} t^{(k-3)/2}L^{2(k-1)} C(L,R,n)  + 2^{k-1}L^{(k-1)} t^{(k-1)/2}\Big).
\]
We deduce
\[
\Big| \omega_k \frac{e^{i\Delta_k t}}{i\Delta_k}F_{k-1}(t) \Big| \leq t^{(k-1)/2} \Big( 4C^{k-1} L^{2k)}  C(L,R,n) + 2^{k+1}L^{k+1} \Big).
\]
We also have
\[
|F_{k-2}(t_k) |\leq C^{k-2} t_k^{(k-3)/2}L^{2(k-2)}  C(L,R,n) 
\]
from which we deduce
\[
\Big| \int_{0}^t  \omega_k \omega_{k-1}\frac{e^{i(\Delta_k + \Delta_{k-1}) t_k}}{i\Delta_k} F_{k-2}(t_k) dt_k\Big| \leq 4L^2C^{k-2} t^{(k-1)/2}L^{2(k-2)} C(L,R,n)  .
\]
Therefore, we get
\[
|F_k(t)|\leq t^{(k-1)/2}L^{2k} C(L,R,n) \Big( 4C^{k-1} + 2^{k+1} +4C^{k-2}\Big)
\]
and we get the result if $\Big( 4C^{k-1} + 2^{k+1} +4C^{k-2}\Big)\leq C^k$ for instance if $C\geq 10$. 
\end{proof}

\begin{definition} Let $n \in 2\N$, $A\in \mathcal A_n$, $\sigma \in \mathfrak S_A$. Let $\vec \xi \in (\frac1{L}\Z^*)^{n+1}$ and $t\in \R_+$. We set
\[
\tilde F_A^\sigma(t,\vec \xi) = \prod_{m=1}^{n/2} \frac{\omega_{A}(\sigma(2m),\vec \xi)\omega_{A}(\sigma(2m-1),\vec \xi)}{i\Delta_{A}(\sigma(2m-1),\vec \xi)}\delta(\Delta_{A}(\sigma(2m-1),\vec \xi)+\Delta_{A}(\sigma(2m),\vec\xi)) \frac{t^{n/2}}{(n/2)!}.
\]
For $n\in 2\N$, $t\in \R$ and $a\in H^s$, we define accordingly $\tilde \psi_n(t)(a)$ in Fourier mode as
\[
\widehat{\tilde \psi}_n(t)(a)(\xi) = \frac{(-i\varepsilon)^n}{(2\pi L)^{n/2}}\sum_{A\in \mathcal A_n} \sum_{\sigma \in \mathfrak S_A} \sum_{\xi_1+\hdots+ \xi_{n+1} = \xi} \tilde F_A^\sigma(t,\vec \xi) \prod_{l=1}^{n+1}\hat a(\xi_l).
\]
For $n\in 2\N$, $t\in \R$ and $a\in H^s$, we set accordingly $\tilde\Psi_n(t)(a) = S(t) \tilde \psi_n(t)(a)$.
\end{definition}

\begin{proposition}\label{prop:sndestonPsin}
Let $s>\frac14$. Let $L,D,R\geq 1$. Let $a\in H^s$ such that $\|a\|_{H^s}\leq D\sqrt L$.  Set $a_R$ defined in Fourier mode as
\[
\hat a_R(\xi) = {\bf 1}_{|\xi|\leq R} \hat a(\xi).
\]
For all $n\in \N$, there exists $C(L,D,R,n)$ such that with the notations of Proposition \ref{prop:fstestonPsin}, we have that for all $T\leq T_0(L,D)$ and for all $\varepsilon \leq \epsilon_0(L,D)$,  all $t\in [-T\varepsilon^{-2}, T\varepsilon^{-2}]$, all $\frac14< s'<s$, 
\[
\|\Psi_n (t) (a) \|_{H^{s'}} \leq 8^{-n} c_n D\sqrt L R^{-(s-s')} 
+ \varepsilon C(L,D,R,n)
\]
if $n$ is odd, and
\[
\|\Psi_n (t) (a) - \tilde\Psi_n(t)(a_R) \|_{H^{s'}} \leq 8^{-n} c_n D\sqrt L R^{-(s-s')}
+ 
\varepsilon^2  C(L,D,R,n)
\]
if $n$ is even.
\end{proposition}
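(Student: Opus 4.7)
The plan is to split the initial datum as $a = a_R + (a - a_R)$ and handle the two pieces by quite different arguments. The high-frequency tail $a - a_R$ will be controlled by the Lipschitz estimate of Proposition \ref{prop:fstestonPsin} applied at regularity $s'$, which is admissible since $s' > \frac14$ and $\|a\|_{H^{s'}}, \|a_R\|_{H^{s'}} \leq \|a\|_{H^s} \leq D\sqrt L$. This yields
\[
\|\Psi_n(t)(a) - \Psi_n(t)(a_R)\|_{H^{s'}} \leq 8^{-n} c_n \|a - a_R\|_{H^{s'}} \leq 8^{-n} c_n R^{-(s-s')} D\sqrt L,
\]
where the last inequality uses the standard fact $\sum_{|\xi|>R} \an{\xi}^{2s'}|\hat a(\xi)|^2 \leq R^{-2(s-s')} \|a\|_{H^s}^2$. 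This accounts for the first summand on the right-hand side in both the odd and even cases.

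For the truncated piece, I would use the tree expansion of Proposition \ref{prop:psinwithTn1}. Since $S(t)$ is an isometry on $H^{s'}$, it suffices to control $\psi_n(t)(a_R)$ (resp.\ $\psi_n(t)(a_R) - \tilde\psi_n(t)(a_R)$). Because $\hat a_R$ is supported in $|\xi_l|\leq R$, all the frequency sums collapse to finite ones of cardinality at most $(CRL)^n$, the output frequency $\xi$ runs over $|\xi|\leq (n+1)R$, and every factor $|\hat a(\xi_l)|\leq \|a\|_{H^s}\leq D\sqrt L$. The number of pairs $(A,\sigma)$ with $A\in \mathcal A_n$ and $\sigma\in \mathfrak S_A$ is bounded by $c_n\cdot n!$. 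All of these combinatorial and $L,R$-dependent factors are absorbed into the constant $C(L,D,R,n)$ that is allowed to depend on $n$.

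The essential $\varepsilon$-power counting then comes from Proposition \ref{prop:estFsigma}: for $n$ odd,
\[
|F_A^\sigma(t,\vec\xi)| \leq C^n L^{2n} t^{(n-1)/2} C(L,R,n),
\]
and for $n$ even,
\[
|F_A^\sigma(t,\vec\xi) - \tilde F_A^\sigma(t,\vec\xi)| \leq C^n L^{2n} t^{(n-2)/2} C(L,R,n).
\]
Combined with the prefactor $\varepsilon^n/(2\pi L)^{n/2}$ and the time restriction $|t|\leq T\varepsilon^{-2}$ with $T\leq T_0(L,D)$, this yields
\[
\varepsilon^n t^{(n-1)/2} \leq T^{(n-1)/2}\varepsilon,\qquad \varepsilon^n t^{(n-2)/2}\leq T^{(n-2)/2}\varepsilon^2,
\]
so that $\|\Psi_n(t)(a_R)\|_{H^{s'}} \leq \varepsilon\, C(L,D,R,n)$ for odd $n$ and $\|\Psi_n(t)(a_R) - \tilde\Psi_n(t)(a_R)\|_{H^{s'}}\leq \varepsilon^2\, C(L,D,R,n)$ for even $n$. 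Summing the two contributions gives the announced estimate.

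The main obstacle is not the decomposition itself but the $\varepsilon$-power counting, which depends critically on the sub-linear $t$-growth provided by Proposition \ref{prop:estFsigma}. That bound rests on the separation of non-resonant pairs $|\Delta_A(m_1,\vec\xi)+\Delta_A(m_2,\vec\xi)|\gtrsim C(L,R,n)^{-1}$ established in the lemma preceding it; in particular it is the subtraction of $\tilde F_A^\sigma$ for even $n$ that removes the potentially non-oscillatory (resonant) contributions which would otherwise produce uncontrolled $t^{n/2}$ growth. The rest of the argument is bookkeeping of constants that are allowed to depend on $L,D,R,n$.
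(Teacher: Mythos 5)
Your proof is correct and follows essentially the same path as the paper's: the decomposition $a = a_R + (a-a_R)$ with the Lipschitz estimate of Proposition~\ref{prop:fstestonPsin} at regularity $s'$ for the tail, and the tree expansion of Proposition~\ref{prop:psinwithTn1} combined with the $t^{(n-1)/2}$ (resp.\ $t^{(n-2)/2}$) bounds of Proposition~\ref{prop:estFsigma} for the truncated part, with the frequency truncation making all the combinatorial factors absorbable into $C(L,D,R,n)$. The only cosmetic difference is that the paper controls the sum over $\xi$ by a Cauchy--Schwarz step rather than your crude cardinality bound, which is immaterial here.
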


\begin{proof} Because of Proposition \ref{prop:fstestonPsin}, we have 
\[
\| \Psi_n(t)(a) - \Psi_n(t)(a_R)\|_{H^{s'}} \leq 8^{-n} c_n \|a-a_R\|_{H^{s'}}.
\]
We have $\|a-a_R\|_{H^{s'}}\leq R^{-(s-s')}\|a\|_{H^s} \leq R^{-(s-s')}D\sqrt L$, which yields
\[
\| \Psi_n(t)(a) - \Psi_n(t)(a_R)\|_{H^{s'}}\leq 8^{-n} c_n R^{-(s-s')}D\sqrt L.
\]

If $n$ is odd, we estimate
\[
\|\Psi_n(t)(a_R)\|_{H^{s'}} = \|\psi_n(t)(a_R)\|_{H^{s'}}.
\]
We have 
\[
\hat \psi_n(t)(a_R)(\xi)=
\frac{(-i\varepsilon)^n}{(2\pi L)^{n/2}}\sum_{A\in \mathcal A_n} \sum_{\sigma \in \mathfrak S_A} \sum_{\xi_1+\hdots+ \xi_{n+1} = \xi} F_A^\sigma(t,\vec \xi) \prod_{l=1}^{n+1}\hat a_R(\xi_l).
\]
Because of the support in Fourier mode of $a_R$, we have for some $C(L,R,n)$,
\[
|\hat \psi_n(t)(a_R)(\xi)|\leq t^{(n-1)/2}C(L,R,n)
\frac{(\varepsilon)^n}{(2\pi L)^{n/2}}\sum_{A\in \mathcal A_n} \sum_{\sigma \in \mathfrak S_A} \sum_{\xi_1+\hdots+ \xi_{n+1} = \xi}  \prod_{l=1}^{n+1}|\hat a_R(\xi_l)|.
\]
The cardinal of 
$
\{(A,\sigma) \; |\; A\in \mathcal A_n,\; \sigma \in \mathfrak S_A\}
$
is $n!$. We deduce that for $t\in [-T\varepsilon^{-2},T\varepsilon^{-2}]$,
\[
|\hat \psi_n(t)(a_R)(\xi)|\leq n! C(L,R,n) T^{(n-1)/2}
\frac\varepsilon{(2\pi L)^{n/2}} \sum_{\xi_1+\hdots+ \xi_{n+1} = \xi}  \prod_{l=1}^{n+1}|\hat a_R(\xi_l)|.
\]
Summing in $\xi$, we get
\[
\|\psi_n(t)(a_R)\|_{H^{s'}} \leq n! C(L,R,n) T^{(n-1)/2}
\frac\varepsilon{(2\pi L)^{n/2}} \|a_R\|_{H^{s'}} \Big( \sum_\xi \an{\xi}^{s'}|\hat a_R(\xi)|\Big)^n.
\]
We have 
\[
\frac1{\sqrt{2\pi L}}\sum_\xi \an{\xi}^{s'}|\hat a_R(\xi)| \leq \Big(\frac1{2\pi L}\sum_{|\xi|\leq R} \an{\xi}^{2(s'-s)}\Big)^{1/2} \|a\|_{H^s} \leq C_{s-s'} R^{\max(0,s'-s+\frac12)} \|a\|_{H^s}.
\]
We deduce 
\[
\|\psi_n(t)(a_R)\|_{H^{s'}} \leq n! C_{s-s'}^n R^{\max(0,s'-s+1/2) n} C(L,R,n) T^{(n-1)/2}
\varepsilon  L^{n/2+1} D^{n+1}  .
\]
Because $T\leq T_0(L,D)$, we get indeed
\[
\|\psi_n(t)(a_R)\|_{H^{s'}} \leq \varepsilon C(L,D,R,n) .
\]

We proceed in the same way to estimate $\Psi_n(t)(a) - \tilde \Psi_n(t)(a_R)$ when $n$ is even.
\end{proof}

\subsection{Trees describing \texorpdfstring{$\tilde \psi_{\varepsilon,L,n}$}{~psiEpsLn}}\label{subsec:fstcancel}

\begin{definition}\label{def:trinarytrees} Let $\mathcal{\tilde A}_n$ be defined for $n\in \N$ as
\[
\tildeA_n = \{(A,\sigma) \; |\; A\in \mathcal A_{2n}, \sigma \in \mathfrak S_A,\; \forall j=1,\hdots,\; \sigma(2j-1) \leq_A \sigma(2j)\}.
\]
\end{definition}

\begin{remark}
If $n=0$, we have
\[
\tildeA_0 = \{(\bot,\emptyset)\}.
\]
\end{remark}

\begin{proposition}\label{prop:fstcancel}We have for $n\in 2\N$,
\[
\widehat{\tilde \psi}_n(t)(a)(\xi) = \frac{(-i\varepsilon)^n}{(2\pi L)^{n/2}}\sum_{(A,\sigma)\in \tildeA_{n/2}} \sum_{\xi_1+\hdots+ \xi_{n+1} = \xi} \tilde F_A^\sigma(t,\vec \xi) \prod_{l=1}^{n+1}\hat a(\xi_l).
\]
\end{proposition}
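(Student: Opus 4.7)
The plan is to establish the claim via a sign-reversing involution on $\mathfrak S_A$ whose fixed-point set, for each $A \in \mathcal A_n$, is exactly $\{\sigma : (A,\sigma) \in \tildeA_{n/2}\}$. Starting from the definition
\[
\widehat{\tilde \psi}_n(t)(a)(\xi) = \frac{(-i\varepsilon)^n}{(2\pi L)^{n/2}}\sum_{A\in \mathcal A_n} \sum_{\sigma \in \mathfrak S_A} \sum_{\xi_1+\cdots+ \xi_{n+1} = \xi} \tilde F_A^\sigma(t,\vec \xi) \prod_{l=1}^{n+1}\hat a(\xi_l),
\]
the task reduces to showing that the partial sum over $\sigma \in \mathfrak S_A$ with $(A,\sigma) \notin \tildeA_{n/2}$ vanishes for every fixed $A$.

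First I would observe that since $\sigma$ is a linear extension of $\leq_A$ and $2m-1<2m$, each pair $(\sigma(2m-1),\sigma(2m))$ is either $\leq_A$-incomparable or satisfies $\sigma(2m-1) \leq_A \sigma(2m)$. Hence $(A,\sigma) \notin \tildeA_{n/2}$ iff some index $m$ produces an incomparable pair, and I may let $m_0(\sigma)$ denote the smallest such index. I then define the candidate involution $\iota(\sigma)$ by swapping the values $\sigma(2m_0-1)$ and $\sigma(2m_0)$ and leaving every other entry untouched.

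Next I would verify that $\iota$ is well defined. That $\iota(\sigma)\in\mathfrak S_A$ is because any relation of $\leq_A$ possibly affected by the swap involves one of the two moved nodes, and since these nodes are $\leq_A$-incomparable no parenthood relation can be reversed. The pairs for $m<m_0$ are unchanged (hence comparable) and the pair at $m_0$ remains incomparable after swapping, so $m_0(\iota(\sigma))=m_0(\sigma)$ and $\iota$ is an involution on the set of bad $\sigma$.

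The sign computation is the heart of the argument. For every $m\neq m_0$ the $m$-th factor of $\tilde F_A^\sigma$ is untouched by $\iota$. At $m=m_0$ the numerator $\omega_A(\sigma(2m_0-1),\vec\xi)\,\omega_A(\sigma(2m_0),\vec\xi)$ and the Kronecker delta $\delta\bigl(\Delta_A(\sigma(2m_0-1),\vec\xi)+\Delta_A(\sigma(2m_0),\vec\xi)\bigr)$ are symmetric in the two nodes, but the denominator $i\Delta_A(\sigma(2m_0-1),\vec\xi)$ becomes $i\Delta_A(\sigma(2m_0),\vec\xi)$. The BBM dispersion relation ensures $\Delta\neq 0$ on non-zero frequencies, so the quotient is well defined, and on the support of the delta one has $\Delta_A(\sigma(2m_0),\vec\xi)=-\Delta_A(\sigma(2m_0-1),\vec\xi)$, yielding the sign flip $\tilde F_A^{\iota(\sigma)}=-\tilde F_A^\sigma$. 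Pairing $\sigma$ with $\iota(\sigma)$ then annihilates every bad orbit, and only the contribution of $(A,\sigma)\in \tildeA_{n/2}$ survives, which is exactly the right-hand side. The mild technical point is checking that swapping two $\leq_A$-incomparable nodes still produces a linear extension, but this is immediate from the definition of $\leq_A$.
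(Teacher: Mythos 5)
Your proposal is correct and is essentially the paper's own argument reformulated: the paper also pairs each bad $\sigma$ with $\sigma\circ(2j-1,2j)$ at the smallest index $j$ where $\sigma(2j-1)\nleq_A\sigma(2j)$ (organizing this as a stratification into sets $\tildeA_n^{(j)}$ by first failure), checks that the transposed permutation still lies in $\mathfrak S_A$, and uses the sign flip of $\tilde F_A^\sigma$ coming from the antisymmetry of the denominator on the support of the Kronecker delta. The only stylistic difference is that you phrase the pairing as a single sign-reversing involution rather than stratifying first and cancelling within each stratum, and you state the well-definedness check somewhat more briefly than the paper's case-by-case verification.
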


\begin{proof} We recall that
\[
\widehat{\tilde \psi}_{2n}(t)(a)(\xi) = \frac{(-i\varepsilon)^{2n}}{(2\pi L)^{n}}\sum_{A\in \mathcal A_{2n}} \sum_{\sigma \in \mathfrak S_A} \sum_{\xi_1+\hdots+ \xi_{2n+1} = \xi} \tilde F_A^\sigma(t,\vec \xi) \prod_{l=1}^{2n+1}\hat a(\xi_l).
\]
with
\[
\tilde F_A^\sigma(t,\vec \xi) = \prod_{m=1}^{n} \frac{\omega_{A}(\sigma(2m),\vec \xi)\omega_{A}(\sigma(2m-1),\vec \xi)}{i\Delta_{A}(\sigma(2m-1),\vec \xi)}\delta(\Delta_{A}(\sigma(2m-1),\vec \xi)+\Delta_{A}(\sigma(2m),\vec\xi)) \frac{t^{n}}{n!}.
\]
Let $j\in [|1,n|]$. If $\sigma \in \mathfrak{S}_A$ is such that $\sigma(2j-1)$ is not comparable in $A$ with $\sigma(2j)$, then $\sigma' = \sigma \circ (2j-1,2j)$ where $(2j-1,2j)$ is the transposition between $2j-1$ and $2j$ still belongs to $\mathfrak{S}_A$. Indeed, assume that $l\leq_A m$. If both $\sigma^{-1}(l)$ and $\sigma^{-1}(m)$ do not belong to $\{2j-1,2j\}$, then 
\[
(\sigma')^{-1}(l) = \sigma^{-1}(l) \leq \sigma^{-1}(m) = (\sigma')^{-1}(m).
\]
If both $\sigma^{-1}(l)$ and $\sigma^{-1}(m)$ belong to $\{2j-1,2j\}$, then because $\sigma(2j)$ and $\sigma(2j-1)$ are not comparable, we get $l=m$ and thus $(\sigma')^{-1}(l)=(\sigma')^{-1}(m)$. 

If $\sigma^{-1}(l) \in \{2j-1,2j\}$ and $\sigma^{-1}(m) \notin \{2j-1,2j\}$. We have $(\sigma')^{-1}(l) \in \{2j-1,2j\}$ and $\sigma^{-1}(m) \geq \sigma^{-1}(l)$ and thus $\sigma^{-1}(m)>2j$. We deduce
\[
(\sigma')^{-1}(m) = \sigma^{-1}(m) >2j \geq (\sigma')^{-1}(l).
\]

If $\sigma^{-1}(m) \in \{2j-1,2j\}$ and $\sigma^{-1}(l) \notin \{2j-1,2j\}$. We have $(\sigma')^{-1}(m) \in \{2j-1,2j\}$ and $\sigma^{-1}(l) \leq \sigma^{-1}(m)$ and thus $\sigma^{-1}(l)<2j-1$. We deduce
\[
(\sigma')^{-1}(l) = \sigma^{-1}(l) <2j-1 \leq (\sigma')^{-1}(m).
\]

What is more,
\[
\tilde F_A^\sigma(t,\vec \xi) = -\tilde F_A^{\sigma'}(t,\vec \xi).
\]
Set 
\[
\tildeA_n^{(1)} = \{(A,\sigma) \; |\; A \in \mathcal A_{2n} ,\, \sigma \in \mathfrak S_A , \, \sigma(1)\nleq_A \sigma(2)\}
\]
and by induction on $j\in [|1,n|]$
\[
\tildeA_n^{(j)} = \{(A,\sigma) \; |\; A \in \mathcal A_{2n} ,\, \sigma \in \mathfrak S_A , \, \sigma(2j-1)\nleq_A \sigma(2j)\} \cap (\tildeA_n^{(j-1)})^c
\]
such that
\[
\tildeA_n  = (\tildeA_n^{(n)})^c.
\]

We deduce 
\begin{align*}
\widehat{\tilde \psi}_{2n}(t)(a)(\xi) = \frac{(-i\varepsilon)^{2n}}{(2\pi L)^{n}}\sum_{j=1}^n \sum_{(A,\sigma)\in \tildeA_n^{(j)}}  \sum_{\xi_1+\hdots+ \xi_{2n+1} = \xi} \tilde F_A^\sigma(t,\vec \xi) \prod_{l=1}^{2n+1}\hat a(\xi_l) \\
+ \frac{(-i\varepsilon)^{2n}}{(2\pi L)^{n}}\sum_{(A,\sigma)\in \tildeA_n}  \sum_{\xi_1+\hdots+ \xi_{2n+1} = \xi} \tilde F_A^\sigma(t,\vec \xi) \prod_{l=1}^{2n+1}\hat a(\xi_l).
\end{align*}
We have 
\begin{align*}
\sum_{(A,\sigma)\in \tildeA_n^{(j)}}   \tilde F_A^\sigma(t,\vec \xi) = \sum_{(A,\sigma)\in \tildeA_n^{(j)}}  \tilde F_A^{\sigma\circ (2j-1,2j)}(t,\vec \xi)\\
= -\sum_{(A,\sigma)\in \tildeA_n^{(j)}}   \tilde F_A^\sigma(t,\vec \xi)\\
=0
\end{align*}
We deduce the result.
\end{proof}

\begin{definition} Take $n\in 2\N$. Set now for $(A,\sigma)\in \tildeA_{n/2}$, and $\xi \in \frac1{L}\Z^*$, $a\in H^s$,
\begin{multline*}
(2\pi L)^{n/2}\hat G_A^\sigma(a)(\xi) = \\
\sum_{\xi_1+\hdots+ \xi_{n+1} = \xi} \prod_{m=1}^{n/2} \frac{\omega_{A}(\sigma(2m),\vec \xi)\omega_{A}(\sigma(2m-1),\vec \xi)}{i\Delta_{A}(\sigma(2m-1),\vec \xi)}\delta(\Delta_{A}(\sigma(2m-1),\vec \xi)+\Delta_{A}(\sigma(2m),\vec\xi)) \prod_{l=1}^{n+1}\hat a(\xi_l).
\end{multline*}
By definition, we have 
\[
\tilde \psi_n(t)(a) = (-i\varepsilon)^n  \frac{t^{n/2}}{(n/2)!} \sum_{(A,\sigma) \in \tildeA_{n/2}} G_A^\sigma (a).
\]
\end{definition}

\begin{remark} We remark that
\[
\tilde F_A(t,\vec \xi) = \prod_{m=1}^{n/2} \frac{\omega_{A}(\sigma(2m),\vec \xi)\omega_{A}(\sigma(2m-1),\vec \xi)}{i\Delta_{A}(\sigma(2m-1),\vec \xi)}\delta(\Delta_{A}(\sigma(2m-1),\vec \xi)+\Delta_{A}(\sigma(2m),\vec\xi)) \frac{t^{n/2}}{(n/2)!}.
\]
\end{remark}

\begin{proposition}\label{prop:GTsigma} Let $(A,\sigma) \in \tildeA_n$. Assume $n\geq 1$. We have either $\sigma(2n-1)= (2,0)$ or $\sigma(2n-1) = (1,0)$. If $\sigma(2n-1) = (2,0)$, then set
\[
A = (A_1,(A_2,A_3))
\]
and $(\sigma_1,\sigma_2,\sigma_3)$ the orders on the nodes of $(A_1,A_2,A_3)$ induced by $\sigma$, that is for $m \in N(A_k)$,
\begin{align*}
(1,\sigma_1(j)) = \sigma(\varphi_1(j))\\
(2,(1,\sigma_2(j))) = \sigma(\varphi_2(j))\\
(2,(2,\sigma_3(j))) = \sigma(\varphi_3(j))
\end{align*}
where $\varphi_k$ is the only increasing bijection from $[|1,2n_k|]$ to $\sigma^{-1}(\{1\}\times N(A_1)) \subseteq [|1,2n-2|]$ if $k=1$, $\sigma^{-1}(\{2\}\times \{1\} \times N(A_2))$ if $k=2$, $\sigma^{-1}(\{2\}\times \{2\} \times N(A_3))$ if $k=3$.

If $\sigma(2n-1) = (1,0)$, we set
\[
A = ((A_2,A_3),A_1)
\]
and $(\sigma_1,\sigma_2,\sigma_3)$ the orders on the nodes of $(A_1,A_2,A_3)$ induced by $\sigma$. In both cases, we have $(A_k,\sigma_k) \in \tildeA_{n_k}$ with $n_1+n_2+n_3 = n-1$.

We have 
\[
G_A^\sigma(a) = i K(G_{A_1}^{\sigma_1}(a), G_{A_2}^{\sigma_2}(a),G_{A_3}^{\sigma_3}(a))
\]
where $K$ is defined in Fourier mode by
\[
K(u,v,w) = \frac1{2\pi L}\sum_{\xi_1 + \xi_2+\xi_3 = \xi}\frac{\omega(\xi)\omega(\xi_2+\xi_3)}{\Delta^\xi_{\xi_1,\xi_2+\xi_3}}\delta(\Delta^{\xi}_{\xi_1,\xi_2+\xi_3} + \Delta^{\xi_2+\xi_3}_{\xi_2,\xi_3})\hat u(\xi_1) \hat v(\xi_2)\hat w(\xi_3).
\]
\end{proposition}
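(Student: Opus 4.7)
The statement naturally decomposes into two tasks: (a) identifying the two possibilities for $\sigma(2n-1)$, and (b) unwinding the recursive definitions of $\omega_A,\Delta_A$ at the root level to recognize the kernel of $K$.

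For (a), the key point is that $0$ is the unique maximal element of $(N(A),\leq_A)$, so $\sigma^{-1}(0)=2n$, i.e.\ $\sigma(2n)=0$. If $\sigma(2n-1)$ were not a direct child of the root, there would exist an intermediate node $m'$ with $\sigma(2n-1)<_A m'<_A 0$; then $\sigma^{-1}(m')$ would have to lie strictly between $2n-1$ and $2n$, which is impossible. Hence $\sigma(2n-1)\in\{(1,0),(2,0)\}$, and the decomposition $A=(A_1,(A_2,A_3))$ (resp.\ $A=((A_2,A_3),A_1)$) is forced. The orders $\sigma_k$ are the restrictions of $\sigma$ to the nodes of each subtree. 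To see that $(A_k,\sigma_k)\in\tildeA_{n_k}$ with $n_1+n_2+n_3=n-1$, I would check that no pair $(\sigma(2m-1),\sigma(2m))$ with $m<n$ straddles the two halves of $A$ or of $(A_2,A_3)$: its endpoints are comparable in $\leq_A$, and nodes in distinct subtrees sharing a common root are incomparable. Each such pair therefore lives entirely inside one of $A_1,A_2,A_3$, and the constraint $\sigma_k(2j-1)\leq_{A_k}\sigma_k(2j)$ transfers verbatim from that on $\sigma$.

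For (b), I treat only the case $\sigma(2n-1)=(2,0)$, since the case $\sigma(2n-1)=(1,0)$ reduces to it via the symmetry $\Delta^\xi_{\xi_1,\xi_2}=\Delta^\xi_{\xi_2,\xi_1}$, which places $A_1$ in the first slot of $K$ under the convention of the statement. Writing $\eta_1,\eta_2,\eta_3$ for the sums of the frequencies attached to the leaves of $A_1,A_2,A_3$, the recursive definitions give $\omega_A(\sigma(2n),\vec\xi)=\omega(\bar\xi)$, $\Delta_A(\sigma(2n),\vec\xi)=\Delta^{\bar\xi}_{\eta_1,\eta_2+\eta_3}$, $\omega_A(\sigma(2n-1),\vec\xi)=\omega(\eta_2+\eta_3)$, and $\Delta_A(\sigma(2n-1),\vec\xi)=\Delta^{\eta_2+\eta_3}_{\eta_2,\eta_3}$. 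The $m=n$ factor in the product defining $G_A^\sigma$ is thus
\[
\frac{\omega(\bar\xi)\,\omega(\eta_2+\eta_3)}{i\,\Delta^{\eta_2+\eta_3}_{\eta_2,\eta_3}}\,\delta\bigl(\Delta^{\eta_2+\eta_3}_{\eta_2,\eta_3}+\Delta^{\bar\xi}_{\eta_1,\eta_2+\eta_3}\bigr).
\]
On the support of the delta, $\Delta^{\eta_2+\eta_3}_{\eta_2,\eta_3}=-\Delta^{\bar\xi}_{\eta_1,\eta_2+\eta_3}$, so this equals $i$ times the kernel of $K$ evaluated at $(\eta_1,\eta_2,\eta_3)$.

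It remains to regroup the remaining frequency sum subtree by subtree. The pairs with $m<n$ split into three independent products, one per subtree, and by the recursive definition $\omega_A,\Delta_A$ at a node inside $A_k$ depend only on the corresponding sub-tuple of $\vec\xi$. Summing the internal frequencies of $A_k$ subject to the constraint that they add up to $\eta_k$ reproduces $(2\pi L)^{n_k}\,\hat G_{A_k}^{\sigma_k}(a)(\eta_k)$. Combining with the global normalisation $(2\pi L)^{-n}$ and with $n_1+n_2+n_3=n-1$ leaves exactly the $(2\pi L)^{-1}$ prefactor in the definition of $K$, yielding $G_A^\sigma(a)=iK(G_{A_1}^{\sigma_1}(a),G_{A_2}^{\sigma_2}(a),G_{A_3}^{\sigma_3}(a))$. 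The only really delicate point is the bookkeeping: ensuring the indexing of $\vec\xi$ matches correctly when restricting to each subtree, and verifying that no pair with $m<n$ crosses subtrees. Once this is in place, the rest is algebraic manipulation of the delta identity.
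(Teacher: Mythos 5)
Your proposal is correct and follows essentially the same route as the paper: the dichotomy for $\sigma(2n-1)$ comes from the observation that $\sigma(2n)=0$ and no index can sit strictly between $2n-1$ and $2n$; the pairs $(\sigma(2m-1),\sigma(2m))$ cannot straddle subtrees because their endpoints must be $\leq_A$-comparable while nodes in distinct subtrees are not; and the root factor $m=n$ is identified with the kernel of $iK$ by substituting $\Delta^{\eta_2+\eta_3}_{\eta_2,\eta_3}=-\Delta^{\bar\xi}_{\eta_1,\eta_2+\eta_3}$ on the support of the delta. The only cosmetic difference is that the paper isolates the consequence of the ``pairs do not cross'' observation (namely that the renumbering maps $\varphi_k$ preserve parity, so the induced $\sigma_k$ do pair $\sigma_k(2j-1)$ with $\sigma_k(2j)$ correctly) into a separate Lemma proved by a decreasing induction, whereas you fold this into the main argument as a one-line deduction from the fact that each subtree's index set is a union of consecutive pairs $\{2m-1,2m\}$; both routes rest on exactly the same key observation.
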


\begin{lemma}\label{lem:parity} Assume that $(A,\sigma) \in \tildeA_n$. For all $j \in [|1,n|]$, we have that there exists no $m\in [|1,2n|]$ such that
\[
\sigma(2j-1) <_A \sigma(m) <_A \sigma(2j)
\]
where $<_A$ is the strict order associated to $\leq_A$.

With the above notations, for $k=1,2,3$, $\varphi_k$ respects parity. 
\end{lemma}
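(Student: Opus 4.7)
The plan is to handle the two claims separately: the first is immediate from the compatibility of $\sigma$ with $\leq_A$, while the second requires a mild combinatorial decomposition argument and is where the care is needed.

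For the first claim, I would use only that $\sigma \in \mathfrak S_A$, which gives that $\sigma^{-1}$ preserves the partial order. If there existed $m \in [|1,2n|]$ with $\sigma(2j-1) <_A \sigma(m) <_A \sigma(2j)$, then applying $\sigma^{-1}$ would give $2j-1 < m < 2j$ (strict inequalities because $\sigma$ is a bijection and the nodes are distinct), which has no integer solution. So this part is essentially free.

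For the parity claim, I would first reduce to the case $\sigma(2n-1)=(2,0)$ (the other case being symmetric), so that $A=(A_1,(A_2,A_3))$. The key structural observation is that, after removing the root $0=\sigma(2n)$ and its child $(2,0)=\sigma(2n-1)$, the remaining nodes partition into the three subtrees $A_1$, $A_2$, $A_3$, and any two nodes lying in distinct such subtrees are $\leq_A$-incomparable: their only possible common ancestors are $0$ or $(2,0)$, which themselves are not among the remaining nodes.

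Next, I would invoke the pairing property defining $\tildeA_n$: for each $j' < n$, the nodes $\sigma(2j'-1)$ and $\sigma(2j')$ are $\leq_A$-comparable and distinct from $\sigma(2n-1), \sigma(2n)$, hence must lie in the same subtree among $A_1, A_2, A_3$. Consequently, the set of positions in $[|1,2n-2|]$ whose image under $\sigma$ lies in subtree $k$ is a disjoint union of consecutive pairs of the form $\{2j'-1,2j'\}$. Since $\varphi_k$ is defined as the unique increasing bijection onto this set, it automatically sends $(1,2)$ to the smallest such pair, $(3,4)$ to the next, and so on, so that $\varphi_k(2i-1)$ is always odd and $\varphi_k(2i)$ always even, which is the claimed parity preservation. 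The only real obstacle, admittedly mild, is verifying that a pair cannot straddle two of the three subtrees, and this relies precisely on the incomparability coming from the root-level split.
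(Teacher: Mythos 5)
Both parts of your proof are correct. The first part is essentially identical to the paper's argument: applying the order-preservation property of $\sigma \in \mathfrak S_A$ to the chain $\sigma(2j-1) <_A \sigma(m) <_A \sigma(2j)$ forces the impossible strict inequalities $2j-1 < m < 2j$.

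For the parity claim, your argument shares the paper's crucial intermediate observation, namely that for each $j' < n$ the comparable pair $\sigma(2j'-1), \sigma(2j')$ lies entirely in one of the three subtrees $A_1$, $A_2$, $A_3$ of $A = (A_1,(A_2,A_3))$ (since those subtrees are pairwise incomparable once $0$ and $(2,0)$ are removed). But you then diverge from the paper in how you extract the parity conclusion. The paper characterizes $\varphi_k(j)$ recursively as a nested sequence of maxima and proves the parity statement by a somewhat laborious decreasing induction on $j$, carefully ruling out a putative even value $m = \varphi_1(j_1'+1)$ and then deducing the parity of $j_1'$. You replace this with the direct structural observation that $\sigma^{-1}(\text{subtree } k) \cap [|1,2n-2|]$ is a disjoint union of the blocks $\{2j'-1,2j'\}$, and the unique increasing bijection from $[|1,2n_k|]$ onto such a union of adjacent odd–even blocks must send $\{2i-1,2i\}$ to the $i$-th block in order, hence respects parity. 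Your route is cleaner and avoids the auxiliary maximal characterization of $\varphi_k$ altogether; the two proofs rest on the same geometric fact about the subtree decomposition, but your way of finishing is more transparent.
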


\begin{proof}  For the first part of the lemma, we argue by contradiction. If there exists $m\in [|1,2n|]$ such that
\[
\sigma(2j-1)<_A \sigma(m) <_A\sigma(2j)
\]
them $m\neq 2j-1,2j$. We deduce that $m>2j$ or $m<2j-1$. In both cases, this violates the fact that $\sigma \in \mathfrak S_A$ and thus if $l,l'\in N(A)$ are such that $l\leq_A l'$, then $\sigma^{-1}(l)\leq \sigma^{-1}(l')$.

For the second part of the lemma, we treat the case $\sigma(2n-1) = (2,0)$, the case $\sigma(2n-1) = (1,0)$ being similar.

On the one hand, because $\varphi_1$ is increasing, we have by induction
\begin{align*}
\varphi_1(2n_1) = \max \{l\in [|1,2n-2|] \, |\, \sigma(l) \in \{1\}\times N(A_1)\},\\
\forall j<2n_1,\quad \varphi_1(j) = \max \{l\in [|1,\varphi_1(j+1)-1|]\,|\, \sigma(l) \in \{1\}\times N(A_1)\}.
\end{align*}

For the same reasons, we have 
\begin{align*}
\varphi_2(2n_2) = \max \{l\in [|1,2n-2|] \, |\, \sigma(l) \in \{2\}\times \{1\}  \times N(A_2)\},\\
\forall j<2n_2,\quad \varphi_2(j) = \max \{l\in [|1,\varphi_2(j+1)-1|]\,|\, \sigma(l) \in \{2\}\times\{1\}\times N(A_2)\},\\
\varphi_3(2n_3) = \max \{l\in [|1,2n-2|] \, |\, \sigma(l) \in \{2\}\times \{2\}  \times N(A_3)\},\\
\forall j<2n_3,\quad \varphi_3(j) = \max \{l\in [|1,\varphi_3(j+1)-1|]\,|\, \sigma(l) \in \{2\}\times\{2\}\times N(A_3)\}.
\end{align*}

One the other hand, we have that for all $j\in [|1,n-1|]$, since $\sigma(2j-1)\leq_A \sigma(2j)$, the nodes $\sigma(2j-1)$ and $\sigma(2j)$ belongs to the same subtree of $A$ (either $A_1$, $A_2$ or $A_3$).

Therefore, we prove by (decreasing) induction that for all $j\in [|1,n-1|]$, there exists $k\in \{1,2,3\}$ and $j_k \in [|1,n_k|]$ such that
\[
2j=\varphi_k(2j_k), \quad 2j-1 = \varphi_k(2j_k-1).
\]

For $j=n-1$, we have either $\sigma(2j) \in \{1\}\times N(A_1)$ or $\sigma(2j) \in \{2\}\times\{1\}\times N(A_2)$ or $\sigma(2j) \in  \{2\}\times\{2\}\times N(A_3)$. In the first (resp. second, resp. third) case this ensures that $\varphi_1(2n_1) = 2j$, (resp. $\varphi(2n_2) = 2j$, resp. $\varphi_3(2n_3) = 2j$). From the remark above we have that $\sigma(2j-1) \in \{1\}\times N(A_1)$ (resp. $\sigma(2j-1) \in \{2\}\times \{1\}\times N(A_2)$, resp.  $\sigma(2j-1) \in \{2\}\times \{2\}\times N(A_3)$). We deduce that $\varphi_1(2n_1-1) = 2j-1$ (resp. $\varphi_2(2n_2-1) = 2j-1$, resp. $\varphi_3(2n_3-1) = 2j-1$). 

Let $j\in [|1,n-1|]$ and assume that the induction hypothesis is true for all $l\in [|j+1, n-1|]$. Assume that $\sigma(2j) \in \{1\}\times N(A_1)$, then there exists $j'_1$ such that $\varphi_1(j'_1) = 2j$. Let $m= \varphi_1(j'_1+1)$. We have that $m>2j$. If $m$ is even, from the induction hypothesis, we have that $\varphi_1(j'_1) = m-1$ which contradicts the fact that $\varphi(j'_1) = 2j$. We deduce that $m$ is odd. But since $m>2j$, by the induction hypothesis, we get that $j'_1+1$ is odd and thus $j'_1$ is even. We write $j'_1 = 2j_1$. From the remark above, we get that $\sigma(2j-1) \in \{1\} \times N(A_1)$ and thus $\varphi_1(2j_1-1)  = 2j-1$.

With similar arguments, we get that if $\sigma(2j) \in \{2\}\times \{1\}\times N(A_2)$ (resp. $\sigma(2j) \in \{2\}\times \{2\}\times N(A_3)$) then there exists $j_2 \in [|1,n_2|]$ (resp. $j_3 \in [|1,n_3|]$) such that
\[
\varphi_2(2j_2) = 2j,\quad \varphi_2(2j_2-1) = 2j-1
\]
(resp.
\[
\varphi_3(2j_3) = 2j , \quad \varphi_3(2j_3-1) = 2j-1 .)
\]

Therefore, $\varphi_1,\varphi_2,\varphi_3$ respect parity.
\end{proof}

\begin{proof}[Proof of Proposition \ref{prop:GTsigma}] To prove that the discussion on cases is exhaustive, we use the first part of the above lemma. Assume by contradiction that $\sigma(2n-1) \neq (1,0), (2,0)$ then we have either $\sigma(2n-1) <_A (1,0)$ or $\sigma(2n-1)<_A (2,0)$. We get that
\[
\sigma(2n-1) <_A (\iota, 0) <_A \sigma(2n)
\]
with $\iota = 1$ or $2$. Hence there exists $m=\sigma^{-1}(\iota,0)$ such that
\[
\sigma(2n-1) <_A \sigma(m) <_A \sigma(2n)
\]
which is absurd.

We set $K_A^\sigma$ the function defined by induction on $A$ as 
\[
K_\bot^\emptyset = a
\]
and 
\[
K_A^\sigma = iK(K_{A_1}^{\sigma_1}, K_{A_2}^{\sigma_2},K_{A_3}^{\sigma_3})
\]
And we prove by induction that $K_A^\sigma = G_A^\sigma (a)$. For $A = \bot$, this is by definition.

Let $(A,\sigma) \in \tildeA_n$. We treat the case $\sigma(2n-1) = (2,0)$ and we set
\[
A = (A_1,(A_2,A_3))
\]
and $(\sigma_1,\sigma_2,\sigma_3)$ the orders on the nodes of $(A_1,A_2,A_3)$ induced by $\sigma$. Let $k\in \{1,2,3\}$ and $j_k \in [|1,n_k|]$. Because $\varphi_k$ respects parity, we have 
\[
\sigma(\varphi_k(2j_k-1))\leq_A \sigma(\varphi_k(2j_k-1)+1) = \sigma(\varphi_k(2j_k)).
\]
We deduce that 
\[
\sigma_k(2j_k-1) \leq_{A_k} \sigma_k(2j_k).
\]
Therefore, $(A_k,\sigma_k) \in \tildeA_k$.

We pass in Fourier mode and get
\[
\hat K_A^\sigma(\xi) =  \frac1{2\pi L}\sum_{\eta_1 + \eta_2+\eta_3 = \xi}\frac{\omega(\xi)\omega(\eta_2+\eta_3)}{-i\Delta^\xi_{\eta_1,\eta_2+\eta_3}}\delta(\Delta^{\xi}_{\eta_1,\eta_2+\eta_3} + \Delta^{\eta_2+\eta_3}_{\eta_2,\eta_3})\hat K_{A_1}^{\sigma_1}(\eta_1) \hat K_{A_2}^{\sigma_2}(\eta_2)\hat K_{A_3}^{\sigma_3}(\eta_3).
\]
By the induction hypothesis, we have for $k=1,2,3$, 
\small
\begin{multline*}
(2\pi L)^{n_k}\hat K_{A_k}^{\sigma_k}(\eta_k) = (2\pi L)^{n_k}\hat G_{A_k}^{\sigma_k}(a)(\eta_k) = \\
 \sum_{\sum \eta_{k,j} = \eta_k} \prod_{m=1}^{n/2} \frac{\omega_{A_k}(\sigma_k(2m),\vec \eta_k)\omega_{A_k}(\sigma_k(2m-1),\vec \eta_k)}{i\Delta_{A_k}(\sigma_k(2m-1),\vec \eta_k)}\delta(\Delta_{A_k}(\sigma_k(2m-1),\vec \eta_k)+\Delta_{A_k}(\sigma_k(2m),\vec\eta_k)) \prod_{l=1}^{2n_k+1}\hat a(\eta_{k,l})
\end{multline*}
\normalsize
where $\vec \eta_k = (\eta_{k,1},\hdots,\eta_{k,2n_k+1}) $. By definition of $\sigma_k$, we have 
\[
\omega_{A_k}(\sigma_k(m),\vec \eta_k) = \omega_{A}(\sigma(\varphi_k(m)),\vec \xi)
\]
where $\vec \xi = (\eta_{1,1},\hdots,\eta_{1,2n_1+1},\eta_{2,1},\hdots,\eta_{2,2n_2+1},\eta_{3,1},\hdots,\eta_{3,2n_3+1}) \in (\frac1{L}\Z^*)^{2(n_1+n_2+n_3) + 3} = (\frac1{L}\Z^*)^{2n+1}$. We also have that
\[
\omega(\xi) = \omega_{A}(\sigma(2n),\vec \xi), \quad \omega(\eta_1+\eta_2) = \omega_{A}(\sigma(2n-1),\vec \xi).
\]

Using the lemma, we have that for all $m\in [|1,n_k|]$, we have 
\[
\varphi_k(2m-1) = \varphi_k(2m) - 1 \in 2[|1,n-1|] - 1.
\]
We deduce that
\[
\Delta_{A_k}(\sigma_k(2m-1),\vec \eta_k) = \Delta_{A}(\sigma(\varphi_k(2m)-1),\vec \xi), \quad \Delta_{A_k}(\sigma_k(2m),\vec \eta_k) = \Delta_A(\sigma(\varphi_k(2m)),\vec \xi)
\]
with $\varphi_k(2m) \in 2[|1,(n-1)|]$. We also have that
\[
\Delta^{\xi}_{\eta_1,\eta_2+\eta_3} = \Delta_{A}(\sigma(2n),\vec \xi), \quad \Delta^{\eta_2+\eta_3}_{\eta_2,\eta_3}= \Delta_A(\sigma(2n-1),\vec \xi)
\]

What is more, we have that 
\[
\varphi_1(2[|1,n_1|]) \sqcup \varphi_2(2[|1,n_2|]) \sqcup \varphi_3(2[|1,n_3|]) = 2[|1,n-1|].
\]

And finally, we remark that renaming the variables as $\vec \xi = (\xi_1,\hdots,\xi_{2n+1})$ we get that the sets of 
\[
(\eta_{1,1},\hdots,\eta_{1,2n_1+1},\eta_{2,1},\hdots,\eta_{2,2n_2+1},\eta_{3,1},\hdots,\eta_{3,2n_3+1}) \in \frac1{L}(\Z^*)^{2n+1}
\]
such that there exist $(\eta_1,\eta_2,\eta_3)\in \frac1{L}\Z$ such that $ \eta_1+ \eta_2 + \eta_3 = \xi$ and such that for all $k=1,2,3$, we have $\eta_{k,1}+\hdots,\eta_{k,2n_k+1}= \eta_k $ is equal to
\[
\{ (\xi_1,\hdots,\xi_{2n+1})\,|\, \xi_1 + \hdots \xi_{2n+1} = \xi\}
\]
which allows to conclude noticing that if for some $k=1,2,3$, $\eta_k = 0$, then either $\omega_A ( (2,0))$ or $\omega_A(1,1,0)$ or $\omega_A(1,2,0)$ is null.

The case $\sigma(2n-1) = (1,0)$ is similar.
\end{proof}

\subsection{Exact computation of \texorpdfstring{$\tilde \Psi$}{~ Psi}}\label{subsec:combs}


\begin{proposition}\label{prop:rewriteK} Let $(A,\sigma) \in \tilde A_n$ and $a\in H^s$. We have that 
$
G_A^\sigma(a)
$
is real valued and for all $\xi$,
\[
\widehat{G_A^\sigma (a)}(\xi) = i^{n} U_A^{\sigma}(a,\xi) \hat a (\xi)
\]
where $U_A^\sigma (a,\xi)$ is real, from which we deduce that $U_A^\sigma (a,-\xi) = (-1)^{n} U_A^\sigma (a,\xi)$. From this we get that $U_A^\sigma (a,\xi)$ is obtained by induction (with the notations of Proposition \ref{prop:GTsigma}) by
\[
U_A^\sigma (a,\xi) = -\frac1{2\pi L}\sum_{\eta} F(\eta,\xi)  U_{A_1}^{\sigma_1}(a,\eta)\Big( U_{A_2}^{\sigma_2}(a,\eta) U_{A_3}^{\sigma_3}(a,\xi) + U_{A_3}^{\sigma_3}(a,\eta) U_{A_2}^{\sigma_2}(a,\xi)\Big) |a(\eta)|^2
\]
where
\[
F(\eta,\xi) = \frac{\omega(\xi)\omega (\xi-\eta)}{\Delta^{\xi}_{\eta,\xi-\eta}} .
\]
\end{proposition}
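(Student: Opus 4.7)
I would argue by strong induction on $n \geq 0$, the index in $\tildeA_n$. The base case $n = 0$ is immediate: $(A,\sigma) = (\bot,\emptyset)$, $G_\bot^\emptyset(a) = a$ by definition, hence $\widehat{G_\bot^\emptyset(a)}(\xi) = \hat a(\xi) = i^0 \cdot 1 \cdot \hat a(\xi)$. So $U_\bot^\emptyset \equiv 1$, which is real and trivially satisfies the parity relation.

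For $n \geq 1$, I would invoke Proposition \ref{prop:GTsigma} to write $G_A^\sigma(a) = i K(G_{A_1}^{\sigma_1}(a), G_{A_2}^{\sigma_2}(a), G_{A_3}^{\sigma_3}(a))$, with $(A_k,\sigma_k) \in \tildeA_{n_k}$ and $n_1+n_2+n_3 = n-1$. By the induction hypothesis, each $G_{A_k}^{\sigma_k}(a)$ is real-valued, $\widehat{G_{A_k}^{\sigma_k}(a)}(\xi_k) = i^{n_k} U_{A_k}^{\sigma_k}(a,\xi_k) \hat a(\xi_k)$ with $U_{A_k}^{\sigma_k}$ real, and $U_{A_k}^{\sigma_k}(a,-\xi_k) = (-1)^{n_k} U_{A_k}^{\sigma_k}(a,\xi_k)$. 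Real-valuedness of $G_A^\sigma(a)$ follows by checking that $iK$ sends triples of real functions to real functions: substituting $\xi_j \mapsto -\xi_j$ in the sum for $\widehat{K(u,v,w)}(-\xi)$, and using that $\omega$ is odd so $\Delta^{-\xi}_{-\xi_1,-\xi_2-\xi_3} = -\Delta^\xi_{\xi_1,\xi_2+\xi_3}$ (while the resonance delta is invariant under the global sign flip) together with $\hat u(-\xi) = \overline{\hat u(\xi)}$ for real $u$, one obtains $\widehat{K}(u,v,w)(-\xi) = -\overline{\widehat K(u,v,w)(\xi)}$; multiplying by $i$ produces the desired Hermitian symmetry.

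To identify the inductive formula, I would plug the induction hypothesis into $\hat K$ to get
\[
\widehat{G_A^\sigma(a)}(\xi) = \frac{i^n}{2\pi L} \sum_{\xi_1+\xi_2+\xi_3=\xi} \frac{\omega(\xi)\omega(\xi_2+\xi_3)}{\Delta^\xi_{\xi_1,\xi_2+\xi_3}} \delta\bigl(\omega(\xi) - \textstyle\sum_j \omega(\xi_j)\bigr) \prod_k U_{A_k}^{\sigma_k}(a,\xi_k)\, \hat a(\xi_k).
\]
The hypothesis that $L^2$ is not algebraic of degree at most $2$ now enters crucially: it forces the resonance $\omega(\xi) = \sum_j \omega(\xi_j)$ with $\sum_j \xi_j = \xi$ and $\xi_j \in \frac{1}{L}\Z^*$ to have only the trivial solutions, namely $\xi_2+\xi_3 = 0$, $\xi_1+\xi_2 = 0$, or $\xi_1+\xi_3 = 0$. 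The first case contributes nothing since $\omega(\xi_2+\xi_3) = 0$. The two remaining cases are each parametrized by $\eta = \xi_1 \in \frac{1}{L}\Z^*$: either $(\xi_1,\xi_2,\xi_3) = (\eta,-\eta,\xi)$ or $(\eta,\xi,-\eta)$, and in both the coefficient becomes $F(\eta,\xi) = \omega(\xi)\omega(\xi-\eta)/\Delta^\xi_{\eta,\xi-\eta}$. Using $\hat a(-\eta) = \overline{\hat a(\eta)}$ to produce the $|\hat a(\eta)|^2$ factor, and the inductive parity $U_{A_k}^{\sigma_k}(a,-\eta) = (-1)^{n_k} U_{A_k}^{\sigma_k}(a,\eta)$, the two contributions assemble into the stated symmetric combination $U_{A_2}(\eta)U_{A_3}(\xi) + U_{A_3}(\eta)U_{A_2}(\xi)$. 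This yields the recursion defining $U_A^\sigma(a,\xi)$; its real-valuedness is automatic since $F$, $|\hat a(\eta)|^2$ and the $U_{A_k}^{\sigma_k}$ are real. Finally, the parity $U_A^\sigma(a,-\xi) = (-1)^n U_A^\sigma(a,\xi)$ follows from real-valuedness of $G_A^\sigma(a)$ by comparing $i^n U(-\xi)\hat a(-\xi) = \overline{i^n U(\xi)\hat a(\xi)} = (-i)^n U(\xi)\hat a(-\xi)$.

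The main obstacle is the careful sign and multiplicity bookkeeping: reconciling the $(-1)^{n_k}$ factors coming from the parity of $U_{A_k}^{\sigma_k}$ with the symmetric form stated in the proposition requires exploiting the invariance $\eta \mapsto -\eta$ of $|\hat a(\eta)|^2$ together with the induced transformation of $F(\eta,\xi)$, and paying attention to the single tuple $(-\xi,\xi,\xi)$ where the two parametrizations by $\eta$ coincide (overlap at $\eta = -\xi$). Once these bookkeeping issues are resolved cleanly, the induction closes.
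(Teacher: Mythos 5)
Your plan tracks the paper's proof very closely: show real-valuedness of $iK$ on real triples by the $\xi\mapsto-\xi$ substitution and the oddness of $\omega$; restrict the resonance sum via Lemma \ref{lem:notalgebraic} to the two trivial families $(\eta,-\eta,\xi)$ and $(\eta,\xi,-\eta)$; pair $\hat a(\eta)\hat a(-\eta)=|\hat a(\eta)|^2$; deduce the recursion; and read off the parity of $U_A^\sigma$ from real-valuedness. This is exactly the route the paper takes.

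The one step that does not go through as you state it is the final assembly. Substituting the two resonance families and the induction hypothesis yields
\[
U_A^\sigma(a,\xi)=\frac{1}{2\pi L}\sum_\eta F(\eta,\xi)\,|\hat a(\eta)|^2\,U_{A_1}^{\sigma_1}(a,\eta)\Big(U_{A_2}^{\sigma_2}(a,-\eta)\,U_{A_3}^{\sigma_3}(a,\xi)+U_{A_3}^{\sigma_3}(a,-\eta)\,U_{A_2}^{\sigma_2}(a,\xi)\Big),
\]
which is the formula the paper's proof actually establishes and the one used downstream in the proof of Proposition \ref{prop:rewriteK2}. Your attempt to rewrite this into the combination with $+\eta$ arguments and a global $-$ sign printed in the statement does not close: applying the parity identity to the two terms produces the inhomogeneous factors $(-1)^{n_2}$ and $(-1)^{n_3}$, not a common $-1$, and a change of variable $\eta\mapsto-\eta$ in the sum does not help either because $F(-\eta,\xi)\neq\pm F(\eta,\xi)$ in general (only $|\hat a(\eta)|^2$ is invariant). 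So that paragraph should be dropped; keep the $-\eta$ form, which is what the induction honestly yields. Your remark about the tuple $(-\xi,\xi,\xi)$ being counted by both parametrizations is a real subtlety the paper does not address, though since it is a single term weighted by $1/(2\pi L)$ it is absorbed into the $O(1/L)$ error of the theorem; flagging it explicitly would be an improvement over the paper's presentation.
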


\begin{lemma} Assume $(\xi_1,\xi_2, \xi_3,\xi)\in \R^*$ are such that $\xi_1+\xi_2+\xi_3 = \xi$, $\xi_2+\xi_3 \neq 0$ and 
\[
\omega(\xi) - \omega(\xi_1) - \omega(\xi_2) -\omega(\xi_3) = \Delta^{\xi}_{\xi_1,\xi_2+\xi_3} + \Delta^{\xi_2+\xi_3}_{\xi_2,\xi_3} = 0.
\]
Then, we have either
\begin{itemize}
\item $\xi_2 = \xi$, $\xi_3=-\xi_1$,
\item $\xi_3 = \xi$, $\xi_3=-\xi_1$,
\item \begin{align*}\xi_2 = \frac{\xi-\xi_1}{2} \pm \frac12 \sqrt{(\xi-\xi_1)^2 -8 + 4 \frac{(1+\xi_1^2)(1+\xi^2)}{1-\xi\xi_1} -4\xi\xi_1}, \\
\xi_3 = \frac{\xi-\xi_1}{2} \mp \frac12 \sqrt{(\xi-\xi_1)^2 -8 + 4 \frac{(1+\xi_1^2)(1+\xi^2)}{1-\xi\xi_1} -4\xi\xi_1}
\end{align*}
assuming that 
\[(\xi-\xi_1)^2 -8 + 4 \frac{(1+\xi_1^2)(1+\xi^2)}{1-\xi\xi_1} -4\xi\xi_1\geq 0,
\]
and $\xi\xi_1 \neq 1$.
\end{itemize} 
\end{lemma}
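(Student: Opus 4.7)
The plan is to reduce the resonance condition to a polynomial equation in the symmetric functions of $\xi_2, \xi_3$. Set $\sigma := \xi_2 + \xi_3 = \xi - \xi_1$ (the second equality coming from $\xi_1+\xi_2+\xi_3=\xi$) and $\pi := \xi_2 \xi_3$. The standing hypothesis $\xi_2 + \xi_3 \neq 0$ is precisely $\sigma \neq 0$. I will treat $\xi, \xi_1$ as fixed parameters and regard the resonance condition as an equation in the unknown $\pi$.

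The engine of the computation is the pair of elementary identities
$$\omega(a) - \omega(b) = \frac{(a-b)(1-ab)}{(1+a^2)(1+b^2)}, \qquad \omega(a)+\omega(b) = \frac{(a+b)(1+ab)}{(1+a^2)(1+b^2)},$$
which transform $\omega(\xi) - \omega(\xi_1) = \omega(\xi_2) + \omega(\xi_3)$ into
$$\frac{\sigma(1-\xi\xi_1)}{(1+\xi^2)(1+\xi_1^2)} = \frac{\sigma(1+\pi)}{(1+\xi_2^2)(1+\xi_3^2)}.$$
Dividing by $\sigma$ and using the identity $(1+\xi_2^2)(1+\xi_3^2) = (1-\pi)^2 + \sigma^2$ (expand the left side in terms of elementary symmetric functions of $\xi_2,\xi_3$), this simplifies to
$$(1-\xi\xi_1)\bigl((1-\pi)^2 + \sigma^2\bigr) = (1+\pi)(1+\xi^2)(1+\xi_1^2),$$
a polynomial equation of degree at most two in $\pi$, with leading coefficient $1-\xi\xi_1$.

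Under the stated assumption $\xi\xi_1 \neq 1$ this is a genuine quadratic in $\pi$. The cleanest way to exhibit a first root is by inspection: the triple $\{\xi_2, \xi_3\} = \{\xi, -\xi_1\}$ trivially satisfies the resonance condition, since $\omega(\xi) + \omega(-\xi_1) = \omega(\xi) - \omega(\xi_1)$; the corresponding symmetric data are $\sigma = \xi-\xi_1$ and $\pi = -\xi\xi_1$, so $\pi = -\xi\xi_1$ is a root of the quadratic, and this accounts for the first two cases of the lemma (depending on which of $\xi_2, \xi_3$ equals $\xi$). By Vieta's formulas, the second root is
$$\pi_2 \;=\; 2 + \xi\xi_1 + \frac{(1+\xi^2)(1+\xi_1^2)}{1-\xi\xi_1},$$
and then $\xi_2, \xi_3$ are the two roots of $X^2 - \sigma X + \pi_2 = 0$. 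The usual quadratic formula, applied to this equation with discriminant $\sigma^2 - 4\pi_2$, yields the explicit expressions of the third case of the lemma.

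The proof is elementary algebra once the two trigonometric identities are written down, so no real obstacle arises; the only mild subtlety is that when $\xi\xi_1 = 1$ the leading coefficient vanishes and the equation degenerates to a linear equation in $\pi$ with solution $\pi = -1$, which is the reason the lemma's third case is stated under the explicit assumption $\xi\xi_1 \neq 1$.
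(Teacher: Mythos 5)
Your route via the elementary symmetric functions $\sigma = \xi_2+\xi_3$, $\pi = \xi_2\xi_3$ and the identities $\omega(a)\pm\omega(b) = \frac{(a\pm b)(1\mp ab)}{(1+a^2)(1+b^2)}$ is genuinely different from the paper's, which instead clears denominators in $\omega(X)+\omega(\eta-X)-\delta$ to obtain a quartic in $X$ and then divides out the known factor $(X-\xi)(X+\xi_1)$. Your reduction is cleaner: you get a quadratic in a single unknown $\pi$ directly, exhibit the root $\pi_1=-\xi\xi_1$ by inspection, and read off $\pi_2$ from Vieta; the case $\xi\xi_1=1$ appears transparently as the degeneration to a linear equation with $\pi=-1$. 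This is a perfectly valid and in fact more economical argument.

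However, your final sentence asserting that the quadratic formula applied to $X^2-\sigma X+\pi_2=0$ ``yields the explicit expressions of the third case of the lemma'' does not hold as written. You correctly derive
\[
\pi_2 = 2 + \xi\xi_1 + \frac{(1+\xi^2)(1+\xi_1^2)}{1-\xi\xi_1},
\]
so the discriminant is
\[
\sigma^2 - 4\pi_2 = (\xi-\xi_1)^2 - 8 - 4\xi\xi_1 - \frac{4(1+\xi^2)(1+\xi_1^2)}{1-\xi\xi_1},
\]
with a \emph{minus} sign in front of the last term, whereas the lemma's third case has a \emph{plus} sign there. A numerical sanity check (e.g.\ $\xi=1$, $\xi_1=2$, giving $\sigma=-1$, the known solution $\{\xi_2,\xi_3\}=\{1,-2\}$, and the second solution $\{2,-3\}$ with product $-6$) confirms that your $\pi_2$ (which yields $-6$) is correct, while the formula with the opposite sign gives $14$. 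Tracing this back to the paper's proof, the sign error occurs in the expansion of the quartic $P$: dividing $X(1+(\eta-X)^2)+(\eta-X)(1+X^2)-\delta\,(1+X^2)(1+(\eta-X)^2)$ by $-\delta$ gives the $X^2$-coefficient $2+\eta^2+\eta/\delta$ and the $X$-coefficient $-\eta^2/\delta - 2\eta$, not $2+\eta^2-\delta^{-1}\eta$ and $\delta^{-1}\eta^2-2\eta$ as written there; consequently $b=2+\eta/\delta+\xi\xi_1$, agreeing with your $\pi_2$. So you found a sign error in the lemma's statement rather than made one; you should record the discrepancy explicitly instead of asserting agreement.
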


\begin{proof} For the sake of the proof, we write $\eta = \xi-\xi_1 \neq 0$. We have $\omega(-x) = -\omega(x)$ therefore, we know that the first two items are solutions to the equation
\[
\omega(\xi) - \omega(\xi_1) - \omega(\xi_2) -\omega(\xi_3) = 0.
\]

The only solutions in $\R^*$ of $\omega(x) = \omega(y)$ are $x=y$ or $x=\frac1{y}$. 

If $\omega(\xi) = \omega(\xi_1)$, we have either $\xi = \xi_1$ or $\xi = \frac1{\xi_1}$. What is more, $\omega(\xi_2)+\omega(\xi_3) = 0$, which yields $\omega(\xi_2) = \omega(-\xi_3)$, that is either $\xi_2 = -\xi_3$ or $\xi_2 = \frac1{\xi_3}$. Since $\xi_2 + \xi_3 \neq 0$, we have $\xi_2 + \frac1{\xi_3} = 0$. This implies that
\[
\xi_2 - \frac1{\xi_2} = \eta
\]
which admits two solutions $\xi_2 = \xi$ or $\xi_2 = -\xi_1$. This yields one of the two first items.

We assume now that $\delta =  \omega(\xi) - \omega(\xi_1) \neq 0$. We look for the zeros of the function
\[
\begin{array}{c}
\R^* \rightarrow \R\\
X \mapsto \omega(X) + \omega(\eta - X) -\delta
\end{array} .
\]
Multiplying the function by $(1+X^2)(1+(\eta - X)^2) = 1 + \eta^2 -2\eta X +X^2(2+\eta^2) -2\eta X^3 +X^4$, we get the polynomial
\[
X(1+(\eta-X)^2) + (\eta-X)(1+X^2) -\delta (1 + \eta^2 -2\eta X +X^2(2+\eta^2) -2\eta X^3 +X^4).
\]
We divide by $-\delta$ and get the polynomial 
\[
P = X^4-2\eta X^3 +X^2(2+\eta^2-\delta^{-1}\eta) +X(\delta^{-1}\eta^2-2\eta ) +(1+\eta^2) -\delta^{-1} \eta.
\]
Because $P$ has $\xi$ and $-\xi_1$ as roots we get that
\[
P = (X-\xi) (X+\xi_1) (X^2 + aX+b) = (X^2-\eta X -\xi \xi_1)(X^2+aX+b)
\]
with $a = -\eta$ and $b -a\eta -\xi\xi_1=  2+\eta^2-\delta^{-1}\eta$ that is $b= 2 - \frac{\eta}{\delta} +\xi\xi_1$. 

The polynomial $Q = X^2 - \eta X +b$ admits roots in $\R$ if
\[
\eta^2 - 4b \geq 0
\]
that is
\[
\eta^2 -8 + 4 \frac{\eta}{\delta} -4\xi\xi_1 \geq 0.
\]
We notice that
\[
 \frac{\eta}{\delta} = \frac{(1+\xi_1^2)(1+\xi^2)}{1-\xi\xi_1}.
\]
Hence the condition writes
\[
\eta^2 -8 + 4 \frac{(1+\xi_1^2)(1+\xi^2)}{1-\xi\xi_1} -4\xi\xi_1 \geq 0.
\]
and in this case the roots write
\[
X = \frac{\eta}{2} \pm \frac12 \sqrt{\eta^2 -8 + 4 \frac{(1+\xi_1^2)(1+\xi^2)}{1-\xi\xi_1} -4\xi\xi_1}.
\]

\end{proof}

\begin{lemma}\label{lem:notalgebraic} If $L^2$ is not algebraic of degree less than $2$ then the system
\begin{equation*}
\left \lbrace{\begin{array}{cc}
\xi_1+\xi_2+\xi_3 = \xi\\
\xi_2+\xi_3 \neq 0 \\
\omega(\xi) - \omega(\xi_1) - \omega(\xi_2) -\omega(\xi_3) = \Delta^{\xi}_{\xi_1,\xi_2+\xi_3} + \Delta^{\xi_2+\xi_3}_{\xi_2,\xi_3} = 0
\end{array}} \right.
\end{equation*}
admits as solutions in $\frac1{L} \Z^*$,
\begin{itemize}
\item $\xi_2 = \xi$, $\xi_3=-\xi_1$,
\item $\xi_3 = \xi$, $\xi_3=-\xi_1$.
\end{itemize}
\end{lemma}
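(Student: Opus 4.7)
The plan is to show that under the hypothesis on $L$, the third alternative of the preceding lemma (the non-trivial square-root solutions) cannot be realized with $\xi_2,\xi_3 \in \tfrac{1}{L}\Z^*$, while the first two alternatives are precisely the trivial resonances allowed by the statement. I also need to handle the degenerate sub-case $\xi\xi_1 = 1$ for which the formula of the preceding lemma does not apply; but this case is easy, since $\xi\xi_1 = 1$ with $\xi,\xi_1 \in \tfrac{1}{L}\Z^*$ forces $L^2 = ab \in \Z$, immediately contradicting the hypothesis.

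For the main case, I would write $\xi = a/L$, $\xi_1 = b/L$ with $a,b \in \Z^*$, and let $D$ denote the quantity under the square root in the third case of the preceding lemma. A direct expansion, followed by the polynomial division of $(L^2+a^2)(L^2+b^2)$ by $L^2-ab$ (whose remainder is the clean expression $ab(a+b)^2$), yields the identity
\[
L^2 D = 5a^2 - 2ab + 5b^2 - 4L^2 + \frac{4\,ab\,(a+b)^2}{L^2-ab}.
\]
Note that $L^2 - ab \neq 0$, by the same rationality argument as above.

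For $\xi_2 = \tfrac{a-b}{2L} + \tfrac{\sqrt D}{2}$ (and symmetrically $\xi_3$) to lie in $\tfrac{1}{L}\Z$, the quantity $L\sqrt D$ must be an integer $s$; equivalently, $L^2 D = s^2$. Substituting this into the displayed identity and clearing the denominator $L^2 - ab$ leads to
\[
4 L^4 + \bigl[s^2 - (5a^2 + 2ab + 5b^2)\bigr] L^2 + ab\bigl[(a^2 - 10ab + b^2) - s^2\bigr] = 0,
\]
which is a polynomial equation of degree $2$ in the unknown $L^2$, with integer coefficients and non-vanishing leading coefficient $4$. Hence $L^2$ is algebraic of degree at most $2$ over $\mathbb{Q}$, contradicting the hypothesis.

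The main obstacle is the bookkeeping of the polynomial division and the verification that the leading coefficient of the resulting quadratic in $L^2$ is independent of $a$, $b$, $s$, and equal to a nonzero constant; this is what makes the contradiction unconditional. Once this computation is carried out, the conclusion is immediate.
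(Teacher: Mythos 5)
Your proof is correct and takes essentially the same approach as the paper: both arguments derive, from the requirement that the nontrivial branch of the preceding lemma land in $\tfrac1L\Z^*$, a polynomial identity of degree $2$ in $L^2$ with integer coefficients and non-vanishing leading coefficient, contradicting the hypothesis on $L^2$. The only (minor) difference is algebraic bookkeeping: you clear denominators after writing the constraint as $L^2D = s^2$ with $s = L\sqrt D \in \Z$, whereas the paper clears denominators after expressing the constraint through the quadratic relation $\xi_2^2 - (\xi-\xi_1)\xi_2 + b = 0$ satisfied by $\xi_2$; these are equivalent formulations, the paper's being slightly shorter since it avoids introducing $s$ and the polynomial long division.
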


\begin{proof} We know that the only other possible solutions are
\begin{align*}
\xi_2 = \frac{\xi-\xi_1}{2} \pm \frac12 \sqrt{(\xi-\xi_1)^2 -8 + 4 \frac{(1+\xi_1^2)(1+\xi^2)}{1-\xi\xi_1} -4\xi\xi_1}, \\
 \xi_3 = \frac{\xi-\xi_1}{2} \mp \frac12 \sqrt{(\xi-\xi_1)^2 -8 + 4 \frac{(1+\xi_1^2)(1+\xi^2)}{1-\xi\xi_1} -4\xi\xi_1}
\end{align*}
assuming that $(\xi-\xi_1)^2 -8 + 4 \frac{(1+\xi_1^2)(1+\xi^2)}{1-\xi\xi_1} -4\xi\xi_1\geq 0$ and $\xi\xi_1 \neq 1$,

If there are such solutions then
\[
\Big(\xi_2 - \frac{\xi-\xi_1}{2}\Big)^2 = \frac14 \Big((\xi-\xi_1)^2 -8 + 4 \frac{(1+\xi_1^2)(1+\xi^2)}{1-\xi\xi_1} -4\xi\xi_1\Big)
\]
which is equivalent to
\[
(1-\xi\xi_1)\xi_2(\xi_1 + \xi_2 - \xi) = -2 + \xi\xi_1 + (1+\xi_1^2) (1+\xi^2) -\xi^2\xi_1^2.
\]
We write
\[
\xi  = \frac{k}{L},\quad \xi_1 = \frac{k_1}{L}, \quad \xi_2 = \frac{k_2}{L}
\]
with $k,k_1,k_2 \in \Z^*$. Multiplying the above identity by $L^4$, we get
\[
(L^2 - kk_1) (k_2 +k_1 - k) = -2L^4 + kk_1 L^2 +(L^2+k_1^2)(L^2+k^2) - k^2k_1^2
\]
which is equivalent to
\[
L^4 + L^2 (k_2(k_1+k_2 - k) - kk_1 -k_1^2 -k^2) - kk_1k_2 (k_1+k_2 - k) = 0
\]
which implies that $L^2$ is algebraic of degree less than $2$. 

\end{proof}

\begin{proof}[Proof of Proposition \ref{prop:rewriteK}] 
By definition, we have 
\[
\hat K(u,v,w) (\xi) = \frac1{2\pi L}\sum_{\xi_1 + \xi_2+\xi_3 = \xi}\frac{\omega(\xi)\omega(\xi_2+\xi_3)}{\Delta^\xi_{\xi_1,\xi_2+\xi_3}}\delta(\Delta^{\xi}_{\xi_1,\xi_2+\xi_3} + \Delta^{\xi_2+\xi_3}_{\xi_2,\xi_3})\hat u(\xi_1) \hat v(\xi_2)\hat w(\xi_3).
\]
Assume that $u,v,w$ are real-valued, we have 
\[
\hat K(u,v,w) (-\xi) = \frac1{2\pi L}\sum_{\xi_1 + \xi_2+\xi_3 = -\xi}\frac{\omega(-\xi)\omega(\xi_2+\xi_3)}{\Delta^{-\xi}_{\xi_1,\xi_2+\xi_3}}\delta(\Delta^{-\xi}_{\xi_1,\xi_2+\xi_3} + \Delta^{\xi_2+\xi_3}_{\xi_2,\xi_3})\hat u(\xi_1) \hat v(\xi_2)\hat w(\xi_3).
\]
We do the change of variables $\eta_j = -\xi_j$ and get
\[
\hat K(u,v,w) (-\xi) = \frac1{2\pi L}\sum_{\eta_1 + \eta_2+\eta_3 = \xi}\frac{\omega(-\xi)\omega(-\eta_2-\eta_3)}{\Delta^{-\xi}_{-\eta_1,-\eta_2-\eta_3}}\delta(\Delta^{-\xi}_{-\eta_1,-\eta_2-\eta_3} + \Delta^{-\eta_2-\eta_3}_{-\eta_2,-\eta_3})\hat u(-\eta_1) \hat v(-\eta_2)\hat w(-\eta_3).
\]
We use that $\omega$ is odd, which implies $\Delta^{-x}_{-y,-z} =\omega(-x) - \omega(-y) - \omega(-z) = -\Delta^{x}_{y,z}$ and get 
\[
\hat K(u,v,w) (-\xi) = \frac1{2\pi L}\sum_{\eta_1 + \eta_2+\eta_3 = \xi}\frac{\omega(\xi)\omega(\eta_2+\eta_3)}{-\Delta^{\xi}_{\eta_1,\eta_2+\eta_3}}\delta(-\Delta^{\xi}_{\eta_1,\eta_2+\eta_3} - \Delta^{\eta_2+\eta_3}_{\eta_2,\eta_3})\hat u(-\eta_1) \hat v(-\eta_2)\hat w(-\eta_3).
\] 
We now use that $u,v,w$ are real-valued and get
\[
\overline{\hat K(u,v,w) (-\xi)} = -\frac1{2\pi L}\sum_{\eta_1 + \eta_2+\eta_3 = \xi}\frac{\omega(\xi)\omega(\eta_2+\eta_3)}{\Delta^{\xi}_{\eta_1,\eta_2+\eta_3}}\delta(\Delta^{\xi}_{\eta_1,\eta_2+\eta_3} + \Delta^{\eta_2+\eta_3}_{\eta_2,\eta_3})\hat u(\eta_1) \hat v(\eta_2)\hat w(\eta_3)
\] 
and thus
\[
\overline{\hat K(u,v,w) (-\xi)} = -\hat K(u,v,w)(\xi)
\]
which implies that $K(u,v,w)$ is imaginary and thus by induction and using Proposition \ref{prop:GTsigma}, $G_A^\sigma(a)$ is real-valued.

Assume that $u,v,w$ write in Fourier mode as 
\[
\hat u(\xi) = U_u(\xi) a (\xi),\quad \hat v(\xi) = U_v(\xi) a (\xi),\quad \hat w(\xi) = U_w(\xi) a (\xi)
\]
with $U_u(\xi),U_v(\xi),U_w(\xi)$ real. 

Again, we have 
\[
\hat K(u,v,w) (\xi) = \frac1{2\pi L}\sum_{\xi_1 + \xi_2+\xi_3 = \xi}\frac{\omega(\xi)\omega(\xi_2+\xi_3)}{\Delta^\xi_{\xi_1,\xi_2+\xi_3}}\delta(\Delta^{\xi}_{\xi_1,\xi_2+\xi_3} + \Delta^{\xi_2+\xi_3}_{\xi_2,\xi_3})\hat u(\xi_1) \hat v(\xi_2)\hat w(\xi_3).
\]
We apply Lemma \ref{lem:notalgebraic} and get
\[
\hat K(u,v,w) (\xi) = \frac1{2\pi L}\sum_{\eta}\frac{\omega(\xi)\omega(\xi-\eta)}{\Delta^\xi_{\eta,\xi-\eta}}\hat u(\eta)( \hat v(-\eta)\hat w(\xi) +  \hat w(-\eta)\hat v(\xi)) .
\]
We use the hypothesis on $u,v,w$ and get
\[
\hat K(u,v,w) (\xi) = \frac1{2\pi L}\sum_{\eta}\frac{\omega(\xi)\omega(\xi-\eta)}{\Delta^\xi_{\eta,\xi-\eta}}|a(\eta)|^2 U_u(\eta)( U_v(-\eta)U_w(\xi) +  U_w(-\eta)U_v(\xi)) \hat a(\xi) .
\]
Therefore, we have 
\[
\hat K(u,v,w) (\xi) = U_{K(u,v,w)}(\xi) a(\xi)
\]
with
\[
U_{K(u,v,w)}(\xi) = \frac1{2\pi L}\sum_{\eta}\frac{\omega(\xi)\omega(\xi-\eta)}{\Delta^\xi_{\eta,\xi-\eta}}|a(\eta)|^2 U_u(\eta)( U_v(-\eta)U_w(\xi) +  U_w(-\eta)U_v(\xi)).
\]
Because if $A=(A_1,(A_2,A_3))$ or $A=((A_2,A_3),A_1)$ we have $n = n_1+n_2+n_3 +1$ (where $(A_j,\sigma_j) \in \tildeA_{n_j}$), the powers on the imaginary $i$ match and we get by induction that
\[
\widehat{G_A^{\sigma}(a)}(\xi) = i^{n} U_A^\sigma(a,\xi) a(\xi)
\]
with $U_A^\sigma(a,\xi)$ real and satisfying the induction definition
\[
U_A^\sigma(a,\xi) = \frac1{2\pi L}\sum_{\eta}\frac{\omega(\xi)\omega(\xi-\eta)}{\Delta^\xi_{\eta,\xi-\eta}}|a(\eta)|^2 U_{A_1}^{\sigma_1}(\eta)( U_{A_2}^{\sigma_2}(-\eta)U_{A_3}^{\sigma_3}(\xi) +  U_{A_3}^{\sigma_3}(-\eta)U_{A_2}^{\sigma_2}(\xi))
\]
with initial condition $U_{\bot}^{\emptyset} (a,\xi) = 1$.

Because $G_A^\sigma(a)$ and $a$ are real-valued, we get
\[
U_A^\sigma(a,-\xi) = (-1)^{n} U_A^\sigma(a,\xi) .
\]

\end{proof}

\begin{proposition}\label{prop:rewriteK2} Let $n\in \N$ and $\xi \in \frac1{L}\Z^*$. We have that
\[
\sum_{(A,\sigma)\in \tildeA_n} U_A^\sigma(a,\xi) = U(a,\xi)^{n}
\]
with
\[
U(a,\xi) = \frac2{\pi L} \sum_\eta |a(\eta)|^2 F(\xi,\eta) = \frac2{\pi L} \sum_\eta |a(\eta)|^2 \tilde F (\xi,\eta)
\]
with
\[
\tilde F(\xi,\eta) = \frac{\xi (1+\eta^2)}{(3 + \frac{\eta^2 + \xi^2 + (\xi-\eta)^2}{2}) ( 3 + \frac{\eta^2 + \xi^2 + (\xi+\eta)^2}{2})}.
\]
What is more,
\[
\sum_\xi \an{\xi}|U(a,\xi)|\lesssim \frac{\|a\|_{L^2}^2}{L}.
\]
\end{proposition}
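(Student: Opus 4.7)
The identity $\sum_{(A,\sigma) \in \tildeA_n} U_A^\sigma(a,\xi) = U(a,\xi)^n$ invites induction on $n$: the base case $n=0$ is immediate since $\tildeA_0 = \{(\bot,\emptyset)\}$ and $U_\bot^\emptyset \equiv 1$. For the inductive step, I would parametrize $\tildeA_n$ using Proposition \ref{prop:GTsigma} and Lemma \ref{lem:parity}. Each $(A,\sigma) \in \tildeA_n$ is determined by (i) the choice $\sigma(2n-1) \in \{(1,0),(2,0)\}$ (two cases giving identical values of $U_A^\sigma$); (ii) a triple $(A_j,\sigma_j) \in \tildeA_{n_j}$ with $n_1+n_2+n_3 = n-1$; and (iii) an interleaving, i.e., a partition of the $n-1$ pair-slots of $A$ among the three sub-trees—the parity preservation in Lemma \ref{lem:parity} forces this partition to be one of $\binom{n-1}{n_1,n_2,n_3}$ choices. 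Crucially, the formula for $U_A^\sigma$ in Proposition \ref{prop:rewriteK} depends only on the $U_{A_j}^{\sigma_j}$ (not on the interleaving), so after applying the induction hypothesis the sum on the left reduces, up to a constant prefactor and one integral in $\eta$, to multinomial combinations of $U(a,\eta)^{n_j}$ and $U(a,\xi)^{n_j}$.

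The algebraic core of the argument is the parity identity $U(a,-\eta) = -U(a,\eta)$, which follows immediately from the oddness of $\tilde F(\cdot,\eta)$ in its first variable, combined with $U_{A_j}^{\sigma_j}(a,-\eta) = (-1)^{n_j}U_{A_j}^{\sigma_j}(a,\eta)$ from Proposition \ref{prop:rewriteK}. These signs enter the induction formula in such a way that the multinomial theorem produces the telescoping
\[
\sum_{n_1+n_2+n_3=n-1} \binom{n-1}{n_1,n_2,n_3} U(a,\eta)^{n_1}(-U(a,\eta))^{n_2}U(a,\xi)^{n_3} = (U(a,\eta)-U(a,\eta)+U(a,\xi))^{n-1} = U(a,\xi)^{n-1},
\]
and identically for the symmetric counterpart obtained by swapping the roles of $A_2$ and $A_3$. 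The remaining $\eta$-sum $\sum_\eta F(\eta,\xi)|a(\eta)|^2$ is then converted into a factor of $U(a,\xi)$ using the symmetrization identity $\tfrac12(F(\eta,\xi)+F(-\eta,\xi)) = -\tilde F(\xi,\eta)$, which one verifies by direct computation using $\omega(-x) = -\omega(x)$ and the factored formula $\Delta^\xi_{\xi_1,\xi_2} = -\omega(\xi)\omega(\xi_1)\omega(\xi_2)(3+\tfrac{\xi^2+\xi_1^2+\xi_2^2}{2})$. Careful tracking of signs and constants then yields $U(a,\xi)^n$, closing the induction. The same symmetrization argument also proves the equality $\sum_\eta |a(\eta)|^2 F(\xi,\eta) = \sum_\eta |a(\eta)|^2 \tilde F(\xi,\eta)$.

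For the bound $\sum_\xi \an{\xi}|U(a,\xi)| \lesssim \|a\|_{L^2}^2/L$, I would swap sums to write
\[
\sum_\xi \an{\xi}|U(a,\xi)| \leq \frac{2}{\pi L} \sum_\eta |a(\eta)|^2 \sum_\xi \an{\xi}|\tilde F(\xi,\eta)|,
\]
and then bound the inner sum uniformly in $\eta$. Writing the denominator as $(3+\xi^2+\eta^2)^2 - (\xi\eta)^2$ and using AM-GM ($(\xi\eta)^2 \leq (\xi^2+\eta^2)^2/4$) yields the pointwise estimate $|\tilde F(\xi,\eta)| \lesssim |\xi|(1+\eta^2)/(3+\xi^2+\eta^2)^2$. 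The main obstacle is controlling the inner $\xi$-sum uniformly in $\eta$; this I would do by comparison with the integral $\int \an{x}|\tilde F(x,\eta)|\,dx$ after the substitution $x = \xi/\sqrt{1+\eta^2}$, which normalizes away the $\eta$-dependence of the denominator and reduces the integral to a universal constant, thereby delivering the claimed bound.
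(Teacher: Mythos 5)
Your inductive argument matches the paper's proof in structure and detail: you parametrize $\tildeA_n$ by the choice $\sigma(2n-1)\in\{(1,0),(2,0)\}$, the subtriple $(A_j,\sigma_j)_{j=1,2,3}$, and the $\binom{n-1}{n_1,n_2,n_3}$ parity-respecting interleavings of Lemma~\ref{lem:parity}; observe that $U_A^\sigma$ depends only on the $U_{A_j}^{\sigma_j}$ and not on the interleaving; and close the induction via the multinomial theorem plus the cancellation $U(a,\eta)+U(a,-\eta)=0$. Your symmetrization in the summation variable $\eta$ to pass from $F$ to $\tilde F$ is equivalent to the paper's symmetrization in $\xi$ by the evenness of $|a(\eta)|^2$. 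So the first two assertions are handled correctly.

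The bound on $\sum_\xi\an{\xi}|U(a,\xi)|$, however, does not work as you describe. Two problems. First, the discrete sum $\sum_{\xi\in\frac1{L}\Z^*}\an{\xi}|\tilde F(\xi,\eta)|$ is comparable to $L\int_\R\an{\xi}|\tilde F(\xi,\eta)|\,d\xi$ (the mesh size is $1/L$), so the prefactor $L^{-1}$ is eaten: even if the integral were bounded you would only obtain $\|a\|_{L^2}^2$, not $\|a\|_{L^2}^2/L$. Second, your substitution $x=\xi/\sqrt{1+\eta^2}$ does not normalize the integral to a universal constant: the Jacobian factor $\sqrt{1+\eta^2}$ survives, and a direct computation gives $\int_\R\an{\xi}|\tilde F(\xi,\eta)|\,d\xi\asymp\an{\eta}$. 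Combined, these show $\sum_\xi\an{\xi}|U(a,\xi)|\asymp\sum_\eta\an{\eta}|a(\eta)|^2$ for generic $a$, so the summed estimate as you have written it is false. What the paper's proof actually establishes, and what is used later in the paper (the estimate $\sup_\xi|U(a,\xi)|\varepsilon^2 t\leq C\|a\|_{L^2}^2L^{-1}T$), is the pointwise bound $\an{\xi}|U(a,\xi)|\leq\frac{8}{\pi L}\|a\|_{L^2}^2$; the $\sum_\xi$ in the statement should read $\sup_\xi$. The proof of the pointwise bound is purely local: each denominator factor $3+\frac{\eta^2+\xi^2+(\xi\mp\eta)^2}{2}$ dominates both $\frac12(1+\xi^2)$ and $\frac12(1+\eta^2)$, hence $\an{\xi}|\tilde F(\xi,\eta)|\leq 4$ uniformly, and one concludes by summing $|a(\eta)|^2$ over $\eta$.
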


\begin{remark}\label{rm:combs} The standard reason for this is that the contributions of most trees cancel each other out. The only ones that do not cancel out are the combs.

The condition $\Delta_A(\sigma(2j-1),\vec \xi) + \Delta_A(\sigma(2j),\vec \xi) = 0$ imposes a coupling on the leaves. In particular, for the trees in $\tildeA_1$, the coupling takes the following form.

\begin{minipage}{4cm}
\begin{tikzpicture}
\draw (0,0) node {$\bullet$};
\draw (1,-1) node {$\bullet$};
\draw[thick] (0,0)--(-1,-1);
\draw[thick] (0,0)--(1,-1);
\draw[thick] (1,-1)--(0,-2);
\draw[thick] (1,-1)--(2,-2);
\draw (0,0) node [above] {$\xi$};
\draw (-1,-1) node [left] {$\eta$};
\draw (0,-2) node [below left] {$-\eta$};
\draw (2,-2) node [below right] {$\xi$};
\draw [thick, red,<->] (-1,-1) arc (0:90:-1);
\end{tikzpicture}
\end{minipage}\hspace{1cm}\begin{minipage}{4,5cm}
\begin{tikzpicture}
\draw (0,0) node {$\bullet$};
\draw (1,-1) node {$\bullet$};
\draw[thick] (0,0)--(-1,-1);
\draw[thick] (0,0)--(1,-1);
\draw[thick] (1,-1)--(0,-2);
\draw[thick] (1,-1)--(2,-2);
\draw (0,0) node [above] {$\xi$};
\draw (-1,-1) node [left] {$\eta$};
\draw (0,-2) node [above left] {$\xi$} ;
\draw (2,-2) node [below right] {$-\eta$};
\draw [thick, red,<->] (-1,-1) arc (12:132:-1.83);
\end{tikzpicture}
\end{minipage}

\begin{minipage}{4cm}
\begin{tikzpicture}
\draw (0,0) node {$\bullet$};
\draw (-1,-1) node {$\bullet$};
\draw[thick] (0,0)--(-1,-1);
\draw[thick] (0,0)--(1,-1);
\draw[thick] (-1,-1)--(0,-2);
\draw[thick] (-1,-1)--(-2,-2);
\draw (0,0) node [above] {$\xi$};
\draw (1,-1) node [right] {$\eta$};
\draw (0,-2) node [below right] {$-\eta$} ;
\draw (-2,-2) node [below left] {$\xi$};
\draw [thick, red,<->] (0,-2) arc (90:180:-1);
\end{tikzpicture}
\end{minipage}\hspace{1cm}\begin{minipage}{4,5cm}
\begin{tikzpicture}
\draw (0,0) node {$\bullet$};
\draw (-1,-1) node {$\bullet$};
\draw[thick] (0,0)--(-1,-1);
\draw[thick] (0,0)--(1,-1);
\draw[thick] (-1,-1)--(0,-2);
\draw[thick] (-1,-1)--(-2,-2);
\draw (0,0) node [above] {$\xi$};
\draw (1,-1) node [right] {$\eta$};
\draw (0,-2) node [above right] {$\xi$};
\draw (-2,-2) node [below left] {$-\eta$};
\draw [thick, red,<->] (-2,-2) arc (48:168:-1.83);
\end{tikzpicture}
\end{minipage}

Those are the building blocks of trees in $\tildeA_n$. They all contribute in the same way to $\tilde \psi_n$. Therefore, we will discuss only the blocks of the first form, obtaining the other ones by symmetries left-right. By symmetry left-right, we mean the one obtained by exchanging the tree to the left with the tree to the right at any node as
\[
(A_1,A_2) \quad \leftrightarrow \quad (A_2,A_1).
\] 
Up to symmetries, all the trees in $\tildeA_2$ are of the forms described below.

\begin{minipage}{6cm}
\begin{tikzpicture}
\draw (0,0) node {$\bullet$};
\draw (1,-1) node {$\bullet$};
\draw (0,-2) node {$\bullet$};
\draw (1,-3) node {$\bullet$};
\draw[thick] (0,0)--(-1,-1);
\draw[thick] (0,0)--(1,-1);
\draw[thick] (1,-1)--(0,-2);
\draw[thick] (1,-1)--(2,-2);
\draw[thick] (0,-2)--(-1,-3);
\draw[thick] (0,-2)--(1,-3);
\draw[thick] (1,-3)--(0,-4);
\draw[thick] (1,-3)--(2,-4);
\draw (0,-4) node [below left] {$\zeta$};
\draw (-1,-3) node [above left] {$-\zeta$};
\draw (-1,-1) node [above left] {$\eta$};
\draw (2,-4) node [below right] {$-\eta$};
\draw (2,-2) node [below right] {$\xi$};
\draw (0,0) node [above] {$\bf 4$};
\draw (1,-1) node [right]{$\bf 3$};
\draw (0,-2) node [left] {$\bf 2$};
\draw (1,-3) node [right ]{$\bf 1$};
\draw [thick, red,<->] (-1,-3) arc (0:90:-1);
\draw [thick, red,<->] (-1,-1) arc (-45:135:-2.12);
\end{tikzpicture}
\[
A_l
\]
\end{minipage}\hspace{1cm}\begin{minipage}{7cm}
\begin{tikzpicture}
\draw (0,0) node {$\bullet$};
\draw (2,-1) node {$\bullet$};
\draw (-2,-1) node {$\bullet$};
\draw (-1,-2) node {$\bullet$};
\draw[thick] (0,0)--(-2,-1);
\draw[thick] (0,0)--(2,-1);
\draw[thick] (2,-1)--(1,-2);
\draw[thick] (2,-1)--(3,-2);
\draw[thick] (-2,-1)--(-3,-2);
\draw[thick] (-2,-1)--(-1,-2);
\draw[thick] (-1,-2)--(-2,-3);
\draw[thick] (-1,-2)--(0,-3);
\draw (0,0) node [above]{$\bf 4$};
\draw (2,-1) node [right] {$\bf 3$};
\draw (-2,-1) node [left] {$\bf 2$};
\draw (-1,-2) node [right] {$\bf 1$};
\draw (-3,-2) node [below left] {$\zeta$};
\draw (-2,-3) node [below left] {$-\zeta$};
\draw (0,-3) node [below right] {$\eta$};
\draw (1,-2) node [below left] {$-\eta$};
\draw (3,-2) node [below right] {$\xi$};
\draw [thick, red,<->] (-3,-2) arc (0:90:-1);
\draw [thick, red,<->] (0,-3) arc (-90:0:1);
\end{tikzpicture} 
\[
A_r
\]
\end{minipage}

\begin{center}
\begin{tikzpicture}
\draw (0,0) node {$\bullet$};
\draw (1,-1) node {$\bullet$};
\draw (2,-2) node {$\bullet$};
\draw (3,-3) node {$\bullet$};
\draw[thick] (0,0)--(-1,-1);
\draw[thick] (0,0)--(1,-1);
\draw[thick] (1,-1)--(0,-2);
\draw[thick] (1,-1)--(2,-2);
\draw[thick] (2,-2)--(1,-3);
\draw[thick] (2,-2)--(3,-3);
\draw[thick] (3,-3)--(2,-4);
\draw[thick] (3,-3)--(4,-4);
\draw (0,0) node [right] {$\bf 4$};
\draw (1,-1) node [right] {$\bf 3$};
\draw (2,-2) node [right] {$\bf 2$};
\draw (3,-3) node [right] {$\bf 1$};
\draw (-1,-1) node [below left] {$\eta$};
\draw (0,-2) node [below left] {$-\eta$};
\draw (1,-3) node [below left] {$\zeta$};
\draw (2,-4) node [below left] {$-\zeta$};
\draw (4,-4) node [below right] {$\xi$};
\draw [thick, red,<->] (-1,-1) arc (0:90:-1);
\draw [thick, red,<->] (1,-3) arc (0:90:-1);
\end{tikzpicture} 
\[
A_c
\]
\end{center}

We remark that with the constraint of the order of the nodes inherent to $\tildeA_2$, there is only one $\sigma$ possible which is the order of the nodes written in bold figures.

The two trees of the first line, $A_l$ and $A_r$, have contributions in $\tilde \psi_2$ that compensate each other. Indeed we have 
\begin{align*}
\omega_{A_l}(\sigma_l(4)) = \omega(\xi) = \omega_{A_r}(\sigma_r(4)) ,\\
\omega_{A_l}(\sigma_l(3)) = \omega(\xi-\eta) = \omega_{A_r}(\sigma_r(3)),\\
\omega_{A_l}(\sigma_l(2)) = \omega(-\eta) = -\omega(\eta) = -\omega_{A_r}(\sigma_r(2)), \\
\omega_{A_l}(\sigma_l(1)) = \omega(\zeta-\eta) = -\omega(\eta-\zeta) = -\omega_{A_r}(\sigma_r(1))
\end{align*}
and
\begin{align*}
\Delta_{A_l}(\sigma_l(4)) = \omega(\xi) - \omega(\eta)-\omega(\xi-\eta) = \Delta_{A_r}(\sigma_r(4)),\\
\Delta_{A_l}(\sigma_l(2)) = \omega(-\eta) - \omega(-\zeta) - \omega(\zeta-\eta) = -(\omega(\eta) - \omega(\zeta) - \omega(\eta - \zeta) ) = - \Delta_{A_r}(\sigma_r(2)).
\end{align*}
The tree of the second line $A_c$ is what is called a comb and is the only one to contribute to $\tilde \psi_2$. We also note that only one leaf in each tree is left uncoupled.

More generally, in a tree of $\tildeA_n$ with the constraints of the $\Delta$, all leaves are coupled but one. To build a tree in $\tildeA_{n+1}$ from the tree in $\tildeA_n$, there are two ways. Either we add a building block to the uncoupled leaf or we take two coupled leaves and we add a building block in the following ways.

\begin{minipage}{3cm}
\begin{tikzpicture}
\draw (0,0) node [left] {$\eta$};
\draw (1,0) node [right] {$-\eta$};
\draw [thick,red,<->] (0,0) arc (-135:-45:0.71);
\end{tikzpicture}
\end{minipage}\hspace{1cm}\begin{minipage}{1cm}
$\rightarrow$
\end{minipage}\hspace{1cm}\begin{minipage}{7cm}
\begin{tikzpicture} 
\draw (0,0) node {$\bullet$};
\draw (1,-1) node {$\bullet$};
\draw (0,0) node [right] {$\bf 2$};
\draw (1,-1) node [right] {$\bf 1$};
\draw [thick] (0,0)--(-1,-1);
\draw [thick] (0,0)--(1,-1);
\draw [thick] (1,-1)--(0,-2);
\draw [thick] (1,-1)--(2,-2);
\draw (-1,-1) node [below left] {$\zeta$};
\draw (0,-2) node [below left] {$-\zeta$};
\draw (2,-2) node [below right] {$\eta$};
\draw (4,0) node [right] {$-\eta$};
\draw [thick,red,<->] (-1,-1) arc (0:90:-1);
\draw [thick,red,<->] (2,-2) arc (-90:0:2);
\end{tikzpicture}
\[
A_u
\]
\end{minipage}

\vspace{1cm}

\begin{minipage}{3cm}
\begin{tikzpicture}
\draw (0,0) node [left] {$\eta$};
\draw (1,0) node [right] {$-\eta$};
\draw [thick,red,<->] (0,0) arc (-135:-45:0.71);
\end{tikzpicture}
\end{minipage}\hspace{1cm}\begin{minipage}{1cm}
$\rightarrow$
\end{minipage}\hspace{1cm}\begin{minipage}{7cm}
\begin{tikzpicture} 
\draw (0,0) node {$\bullet$};
\draw (1,-1) node {$\bullet$};
\draw (0,0) node [right] {$\bf 2$};
\draw (1,-1) node [right] {$\bf 1$};
\draw [thick] (0,0)--(-1,-1);
\draw [thick] (0,0)--(1,-1);
\draw [thick] (1,-1)--(0,-2);
\draw [thick] (1,-1)--(2,-2);
\draw (-1,-1) node [below left] {$-\zeta$};
\draw (0,-2) node [below left] {$\zeta$};
\draw (2,-2) node [below right] {$-\eta$};
\draw (-3,0) node [left] {$\eta$};
\draw [thick,red,<->] (-1,-1) arc (0:90:-1);
\draw [thick,red,<->] (-3,0) arc (169:329:2.69);
\end{tikzpicture}
\[
A_d
\]
\end{minipage}

The two trees $A_u$ and $A_d$, have contributions in $\tilde \psi_{2n+2}$ that compensate each other. Indeed we have 
\begin{align*}
\omega_{A_d}(\sigma_d(2)) = \omega(-\eta) = -\omega(\eta) = -\omega_{A_u}(\sigma_u(2)), \\
\omega_{A_d}(\sigma_d(1)) = \omega(\zeta-\eta) = -\omega(\eta-\zeta) = -\omega_{A_u}(\sigma_u(1))
\end{align*}
and
\[
\Delta_{A_d}(\sigma_d(2)) = \omega(-\eta) - \omega(-\zeta) - \omega(\zeta-\eta) = -(\omega(\eta) - \omega(\zeta) - \omega(\eta - \zeta) ) = - \Delta_{A_u}(\sigma_u(2)).
\]

The only way to form trees whose contributions do not compensate each other is to add a building block to the uncoupled leaf, meaning that up to symmetries, the only tree contributing to $\tilde \psi_n$ is :
\begin{center}
\begin{tikzpicture}
\draw (0,0) node {$\bullet$};
\draw (1,-1) node {$\bullet$};
\draw (3,-3) node {$\bullet$};
\draw (4,-4) node {$\bullet$};
\draw [thick] (0,0)--(-1,-1);
\draw [thick] (0,0)--(1,-1);
\draw [thick] (1,-1)--(0,-2);
\draw [thick,loosely dotted] (1,-1)--(3,-3);
\draw [thick] (3,-3)--(2,-4);
\draw [thick] (3,-3)--(4,-4);
\draw [thick] (4,-4)--(3,-5);
\draw [thick] (4,-4)--(5,-5);
\draw (-1,-1) node [below left] {$\eta_1$};
\draw (0,-2) node [below left] {$-\eta_1$};
\draw (2,-4) node [below left] {$\eta_n$};
\draw (3,-5) node [below left] {$-\eta_n$};
\draw (5,-5) node [below right] {$\xi$};
\draw [thick,red,<->] (-1,-1) arc (0:90:-1);
\draw [thick,red,<->] (2,-4) arc (0:90:-1);
\end{tikzpicture}
\end{center}

The proof, however, is a lot more simple than this because we observe this compensation at a global level. But the local phenomenon that is encompassed is the one we have just described.
\end{remark}

\begin{proof} As we have seen previously if $(A,\sigma)\in \tildeA_{n+1}$ then there exists a unique triplet 
\[
((A_1,\sigma_1),(A_2,\sigma_2),(A_3,\sigma_3)) \in \tildeA_{n_1}\times \tildeA_{n_2} \times \tildeA_{n_3}
\]
such that $n_1+n_2+n_3 = n$ and either $A=(A_1,(A_2,A_3))$ or $A=((A_2,A_3),A_1)$ and $\sigma$ is an ordering on the nodes compatible with $\sigma_1$, $\sigma_2$ and $\sigma_3$ satisfying $\sigma(2j-1)\leq_A \sigma(2j)$. 

Conversely, if $n_1+n_2+n_3 = n$ and $(A_j,\sigma_j) \in \tildeA_{n_j}$ we can build several trees  $(A,\sigma) \in \tildeA_{n+2}$. Indeed, we can build either $A=(A_1,(A_2,A_3))$ or $A=((A_2,A_3),A_1)$ and $\sigma$ should be compatible with $\sigma_1,\sigma_2,\sigma_3$ and such that $\sigma(2j-1)\leq_A \sigma(2j)$. We note this property $\sigma >^A (\sigma_1,\sigma_2,\sigma_3)$. 

We deduce 
\small
\begin{align*}
\tildeA_{n+1} = \{ ((A_1,(A_2,A_3)), \sigma)\; |\; \exists (n_j)_{j=1,2,3}, n_1+n_2+n_3=n, \exists (\sigma_j)_{j=1,2,3},   (A_j,\sigma_j)\in \tildeA_{n_j}, \sigma >^A (\sigma_1,\sigma_2,\sigma_3)\}\\
\sqcup \{(((A_2,A_3),A_1),\sigma)\; |\; \exists (n_j)_{j=1,2,3}, n_1+n_2+n_3=n, \exists (\sigma_j)_{j=1,2,3}, (A_j,\sigma_j)\in \tildeA_{n_j}, \sigma >^A (\sigma_1,\sigma_2,\sigma_3)\}.
\end{align*}
\normalsize
Therefore, writing 
\[
U_n(a,\xi) = \sum_{(A,\sigma)\in \tildeA_n} U_A^\sigma(a,\xi)
\]
we get
\begin{align*}
U_{n+1}(a,\xi) = \sum_{n_1+n_2+n_3 = n} \sum_{(A_j,\sigma_j)\in \tildeA_{n_j}} \sum_{\sigma >^{(A_1,(A_2,A_3))} (\sigma_1,\sigma_2,\sigma_3)} U_{(A_1,(A_2,A_3))}^\sigma \\
+ \sum_{n_1+n_2+n_3 = n} \sum_{(A_j,\sigma_j)\in \tildeA_{n_j}} \sum_{\sigma >^{((A_2,A_3),A_1)} (\sigma_1,\sigma_2,\sigma_3)} U_{((A_2,A_3),A_1)}^\sigma.
\end{align*}
If $\sigma >^A (\sigma_1,\sigma_2,\sigma_3)$ for $A=(A_1,(A_2,A_3))$ and $\sigma' >^{A'} (\sigma_1,\sigma_2,\sigma_3) $ for $A'=((A_2,A_3),A_1)$ we have 
\begin{eqnarray*}
U_A^\sigma (a,\xi)&=& \frac1{2\pi L} \sum_\eta F(\xi,\eta) |a(\eta)|^2 U_{A_1}^{\sigma_1}(a,\eta)\Big( U_{A_2}^{\sigma_2}(a,-\eta) U_{A_3}^{\sigma_3}(a,\xi) + U_{A_2}^{\sigma_2}(a,\xi) U_{A_3}^{\sigma_3}(a,-\eta)\Big)\\ &=& U_{A'}^{\sigma'}(a,\xi).
\end{eqnarray*}
We compute the cardinal of $\{\sigma >^A(\sigma_1,\sigma_2,\sigma_3)\}$. We treat the case $A=(A_1,(A_2,A_3))$, the other one being similar. Take $\sigma >^A(\sigma_1,\sigma_2,\sigma_3)$. We have $\sigma(2n+2) = 0$ and $\sigma(2n+1) = (2,0)$. Now, to describe $\sigma (1),\hdots,\sigma(2n)$, because of Lemma \ref{lem:parity}, it is equivalent to choose $(\varphi_1,\varphi_2,\varphi_3)$ three increasing functions such that for $k\in \{1,2,3\}$,
\[
\varphi_k : [|1,n_k |]\rightarrow [|1,n|]
\]
and such that 
\[
\varphi_1([|1,n_1|]) \sqcup \varphi_2([|1,n_2|]) \sqcup \varphi_3([|1,n_3|])= [|1,n|].
\]
Then, for $j\in [|1,n|]$, there exists a unique $k\in \{1,2,3\}$ and a unique $j_k \in [|1,n_k|]$ such that $j= \varphi_k(j_k)$ and we set 
\[
\sigma(2j) = \left \lbrace{\begin{array}{cc}
(1,\sigma_1(2j_1)) & \textrm{ if } k=1,\\
(2,(1,\sigma_2(2j_2))) & \textrm{ if } k=2,\\
(2,(2,(\sigma_3(2j_3))) & \textrm{ if } k=3,
\end{array}}\right.  \quad 
\sigma(2j-1) = \left \lbrace{\begin{array}{cc}
(1,\sigma_1(2j_1-1)) & \textrm{ if } k=1,\\
(2,(1,\sigma_2(2j_2-1))) & \textrm{ if } k=2,\\
(2,(2,(\sigma_3(2j_3-1))) & \textrm{ if } k=3.
\end{array}}\right.
\]

To build such maps, because they are increasing, it is sufficient to decide for each element of $[|1,n|]$ if it will correspond to an element or $[|1,n_1|]$, $[|1,n_2|]$ or $[|1,n_3|]$, that is, we choose $n_1$ elements among $n$ then $n_2$ among $n-n_1$ and $n_3$ elements are left.

We deduce
\[
\# \{\sigma >^A (\sigma_1,\sigma_2,\sigma_3) \} = \frac{n!}{n_1! n_2! n_3!}.
\]
Therefore, we get
\begin{multline*}
U_{n+1}(a,\xi) = 
\frac1{\pi L}\sum_{n_1+n_2+n_3 = n} \sum_{(A_j,\sigma_j)\in \tildeA_{n_j}} \frac{n!}{n_1! n_2! n_3!} \\
\sum_\eta |a(\eta)|^2 F(\xi,\eta) U_{A_1}^{\sigma_1}(a,\eta)\Big( U_{A_2}^{\sigma_2}(a,-\eta) U_{A_3}^{\sigma_3}(a,\xi) + U_{A_2}^{\sigma_2}(a,\xi) U_{A_3}^{\sigma_3}(a,-\eta)\Big) .
\end{multline*}
The summand is symmetric in $A_2$ and $A_3$, we deduce
\begin{multline*}
U_{n+1}(a,\xi) = \frac2{\pi L}\sum_{n_1+n_2+n_3 = n} \sum_{(A_j,\sigma_j)\in \tildeA_{n_j}} \frac{n!}{n_1! n_2! n_3!} \\
\sum_\eta |a(\eta)|^2 F(\xi,\eta) U_{A_1}^{\sigma_1}(a,\eta)U_{A_2}^{\sigma_2}(a,-\eta) U_{A_3}^{\sigma_3}(a,\xi) .
\end{multline*}
The sets $\tildeA_j$ are finite, therefore we can exchange the sums on $\frac1{L}\Z^*$ and $\tildeA_{n_j}$.
\[
U_{n+1}(a,\xi) = \frac2{\pi L}\sum_{n_1+n_2+n_3 = n}  \frac{n!}{n_1! n_2! n_3!}\sum_\eta |a(\eta)|^2 F(\xi,\eta) U_{n_1}(a,\eta)U_{n_2}(a,-\eta) U_{n_3}(a,\xi) .
\]
We prove by induction on $n$ that
\[
U_n(a,\xi) = U(a,\xi)^{n}.
\]
For $n=0$, we have indeed $U_0(a,\xi) = U_{\bot}^{\emptyset}(a,\xi)=1$.

By the induction formula, we have 
\[
U_{n+1}(a,\xi) = \frac2{\pi L}\sum_\eta |a(\eta)|^2 F(\xi,\eta) \sum_{n_1+n_2+n_3 = n}  \frac{n!}{n_1! n_2! n_3!}    U^{n_1}(a,\eta)U^{n_2}(a,-\eta) U^{n_3}(a,\xi) .
\]
We deduce that
\[
U_{n+1}(a,\xi) = \frac2{\pi L}\sum_\eta |a(\eta)|^2 F(\xi,\eta) (U(a,\eta) +U(a,-\eta) + U(a,\xi))^{n} .
\]
We now remark that $U(a,-\eta) =-U(a,\eta)$. Indeed, by the odd character of $\omega$, we have 
\[
F(\xi,\eta) = -F(-\xi,-\eta)
\]
and thus
\[
U(a,-\eta) = \frac2{\pi L} \sum_\zeta F(-\eta,\zeta) |a(\zeta)|^2 = -\frac2{\pi L} \sum_\zeta F(\eta,-\zeta) |a(\zeta)|^2
\]
and we do the change of variables, $\zeta \leftarrow -\zeta$ to conclude. 

We deduce
\[
U_{n+1}(a,\xi) = \frac2{\pi L}\sum_{\eta}|a(\eta)|^2 F(\xi,\eta)  U(a,\xi)^{n} = U(a,\xi)^{n+1} .
\]

Now we have that
\[
U(a,\xi) = \frac12\Big( U(a,\xi) - U(a,-\xi)\Big) = \frac1{\pi L} \sum_\eta |a(\eta)|^2 \Big( F(\xi,\eta) - F(-\xi,\eta) \Big).
\]
We have 
\[
F(\xi,\eta) = \frac1{\omega(\eta) (3 + \frac{\eta^2 + \xi^2 + (\xi-\eta)^2}{2})}.
\]
We deduce
\[
U(a,\xi) = \frac1{\pi L} \sum_\eta |a(\eta)|^2 \frac1{\omega(\eta)}\Big(\frac1{3 + \frac{\eta^2 + \xi^2 + (\xi-\eta)^2}{2}}- \frac1{3 + \frac{\eta^2 + \xi^2 + (\xi+\eta)^2}{2}} \Big)
\]
and thus
\[
U(a,\xi) = \frac2{\pi L} \sum_\eta |a(\eta)|^2 \frac{(1+\eta^2)\xi}{(3 + \frac{\eta^2 + \xi^2 + (\xi-\eta)^2}{2})(3 + \frac{\eta^2 + \xi^2 + (\xi+\eta)^2}{2})}
\]
where we recognise 
\[
\tilde F(\xi,\eta) = \frac{(1+\eta^2)\xi}{(3 + \frac{\eta^2 + \xi^2 + (\xi-\eta)^2}{2})(3 + \frac{\eta^2 + \xi^2 + (\xi+\eta)^2}{2})}.
\]

We deduce
\[
\an{\xi}|U(a,\xi)| \leq \frac8{\pi L} \sum_\eta |a(\eta)|^2 = \frac{8\|a\|_{L^2}^2}{\pi L}.
\]
\end{proof}


\section{Estimate on the remainder}\label{sec:EstRemInvMea}

\subsection{Comparison between \texorpdfstring{$\psi$}{psi} and \texorpdfstring{$\tilde \psi$}{~ psi}}\label{subsec:PropagEst}

From the previous computation, by definition of $\tilde \psi_n$ we have
\[
\widehat{\tilde \psi}_n (t)(a)(\xi) = \frac{(-i\varepsilon^2 t U(a,\xi))^{n/2}}{(n/2)!}  \hat a(\xi)
\]
and thus
\[
\Big| \widehat{\tilde \psi}_n (t)(a)(\xi)\Big| \leq \Big( \frac{8\varepsilon^2 t \|a\|_{L^2}^2}{\pi L \an{\xi}}\Big)^{n/2}\frac{|\hat a(\xi)|}{(n/2)!}.
\]
We deduce that at all times the series $\sum_n \tilde \psi_n$ converges in $H^s$ and we set
\[
\tilde \psi(t)(a) = \sum_n \tilde \psi_n(t)(a),
\]
that is
\[
\widehat{\tilde \psi}(t)(a)(\xi) = e^{-it\varepsilon^2 U(a,\xi)} \hat a(\xi).
\]
Note that 
\[
U(\tilde \psi(t)(a),\xi) = U(a,\xi)
\]
and that $\tilde \psi$ conserves any $H^s$ norm. We set
\[
\tilde \Psi_{\varepsilon,L} (t) = S(t) \psi_{\varepsilon,L}(t).
\]

\begin{proposition} There exists $C$ such that for all $D$, all $L$, all $T \leq T_1(D) = \frac1{CD^2}$ for all $t\in [-T\varepsilon^{-2},T\varepsilon^{-2}]$, for all $s\geq 0$, and all $a$ such that $\|a\|_{H^s}\leq D\sqrt L$, we have 
\[
\|\tilde \psi_n(t)(a) \|_{H^s} \leq \frac1{(n/2)!} D\sqrt L 
\]
and for all $b$ such that $\|b\|_{H^s}\leq D\sqrt L$,
\[
\|\tilde \psi(t)(a) - \tilde \psi(t) (b)\|_{H^s} \leq 2 \|a-b\|_{H^s}.
\]
\end{proposition}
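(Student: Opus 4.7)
The plan is to exploit the explicit formula
\[
\widehat{\tilde\psi}_n(t)(a)(\xi) = \frac{(-i\varepsilon^2 t\, U(a,\xi))^{n/2}}{(n/2)!}\,\hat a(\xi)
\]
derived just above, together with the kernel bound $\an{\xi}|U(a,\xi)|\leq \frac{8}{\pi L}\|a\|_{L^2}^2$ from Proposition \ref{prop:rewriteK2}. Since $s\geq 0$ we have $\|a\|_{L^2}\leq\|a\|_{H^s}\leq D\sqrt L$, whence $|\varepsilon^2 t\, U(a,\xi)|\leq \frac{8TD^2}{\pi\an{\xi}}$. Choosing $C$ large enough that the hypothesis $T\leq T_1(D)=\frac{1}{CD^2}$ forces $\frac{8TD^2}{\pi}\leq 1$, we obtain $|\varepsilon^2 t\, U(a,\xi)|^n\leq \an{\xi}^{-n}\leq 1$ for every even $n\geq 0$. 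Using $\an{\xi}^{2s-n}\leq \an{\xi}^{2s}$, squaring the formula and summing over $\xi$ then yields the first inequality
\[
\|\tilde\psi_n(t)(a)\|_{H^s}^2\leq \frac{1}{((n/2)!)^2}\sum_\xi \an{\xi}^{2s}|\hat a(\xi)|^2\leq \frac{D^2 L}{((n/2)!)^2}.
\]

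For the Lipschitz estimate, I would split
\[
\widehat{\tilde\psi}(t)(a)(\xi)-\widehat{\tilde\psi}(t)(b)(\xi) = e^{-it\varepsilon^2 U(a,\xi)}(\hat a(\xi)-\hat b(\xi)) + \big(e^{-it\varepsilon^2 U(a,\xi)}-e^{-it\varepsilon^2 U(b,\xi)}\big)\hat b(\xi)
\]
and bound the second factor by $|e^{i\alpha}-e^{i\beta}|\leq|\alpha-\beta|$. Since $U(a,\xi)=\frac{2}{\pi L}\sum_\eta|\hat a(\eta)|^2\tilde F(\xi,\eta)$ is quadratic in $\hat a$, combining $\big||\hat a(\eta)|^2-|\hat b(\eta)|^2\big|\leq |\hat a(\eta)-\hat b(\eta)|(|\hat a(\eta)|+|\hat b(\eta)|)$, Cauchy--Schwarz, and the kernel estimate $\an{\xi}|\tilde F(\xi,\eta)|\lesssim 1$ that underlies Proposition \ref{prop:rewriteK2} produces
\[
\an{\xi}|U(a,\xi)-U(b,\xi)|\lesssim \frac{1}{L}(\|a\|_{L^2}+\|b\|_{L^2})\|a-b\|_{L^2}\lesssim \frac{D}{\sqrt L}\|a-b\|_{L^2}.
\]

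Applying the triangle inequality in $H^s$, the first summand contributes exactly $\|a-b\|_{H^s}$ by unitarity of $e^{i\theta}$, while for the second, the bound $|\varepsilon^2 t|\leq T$ combined with the estimate above gives a multiplier of size $\lesssim \frac{TD}{\sqrt L\,\an{\xi}}\|a-b\|_{L^2}$; squaring against $\an{\xi}^{2s}$, using $\an{\xi}^{2s-2}\leq \an{\xi}^{2s}$, summing, and invoking $\|b\|_{H^s}\leq D\sqrt L$ produces an overall contribution bounded by $\mathrm{const}\cdot TD^2\|a-b\|_{L^2}\leq \frac{\mathrm{const}}{C}\|a-b\|_{H^s}$. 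Enlarging $C$ if necessary makes this second contribution $\leq \|a-b\|_{H^s}$, so summing the two gives the factor $2$. I expect the only care needed is choosing a single constant $C$ that simultaneously satisfies $\frac{8}{\pi C}\leq 1$ and makes this last ratio $\leq 1$; each condition involves only an absolute constant, so this is routine.
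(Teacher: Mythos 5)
Your proof is correct and follows essentially the same route as the paper: the explicit exponential formula for $\tilde\psi$, the $\an{\xi}$-weighted bound on $U$ from Proposition~\ref{prop:rewriteK2} (the paper simply invokes boundedness of $\tilde F$ instead, which is slightly weaker but sufficient), and the same decomposition of $\widehat{\tilde\psi}(t)(a)-\widehat{\tilde\psi}(t)(b)$ into a unitary part plus a Lipschitz-in-phase part. The only cosmetic slip is writing $|\varepsilon^2 t\,U(a,\xi)|^n$ where $|\varepsilon^2 t\,U(a,\xi)|^{n/2}$ is meant, which does not affect the argument.
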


\begin{proof} 
\[
\|\tilde \psi_n(t)(a)\|_{H^s} \leq \Big[\sup_\xi |U(a,\xi)| \varepsilon^{2}t\Big]^{n/2} \frac{\|a\|_{H^s}}{(n/2)!}.
\]
We have 
\[
\sup_\xi |U(a,\xi)| \varepsilon^{2}t \leq C \|a\|_{L^2}^2 L^{-1} T\leq CD^2 T
\]
which is less than $1$ if the constant defining $T_1$ is well-chosen.

What is more, we have 
\[
\|\tilde \psi(t)(a)-\tilde \psi(t) (b)\|_{H^s} \leq \Big[\sup_\xi |U(a,\xi) -U(b,\xi)| \varepsilon^{2}t\Big] \|a\|_{H^s}+ \|a-b\|_{H^s}.
\]
We have 
\[
U(a,\xi) - U(b,\xi) = \frac2{\pi L} \sum_\eta \tilde F(\xi,\eta) (|a(\eta)|^2 - |b(\eta)|^2)
\]
with 
\[
\tilde F(\xi,\eta) =\frac{(1+\eta^2)\xi}{(3 + \frac{\eta^2 + \xi^2 + (\xi-\eta)^2}{2})(3 + \frac{\eta^2 + \xi^2 + (\xi+\eta)^2}{2})}
\]
which is uniformly bounded in $\xi$ and $\eta$, we deduce by Cauchy-Schwarz,
\[
|U(a,\xi) - U(b,\xi)|\leq T C \|a-b\|_{L^2} (\|a\|_{L^2} + \|b\|_{L^2}) L^{-1} \leq CT D L^{-1/2} \|a-b\|_{L^2}.
\]
This concludes the proof.
\end{proof}

\begin{proposition}\label{prop:constant} Let $s> \frac14$ and $\frac14 \leq s'<s$. For all $L,D$, all 
\[
T\leq T_2(L,D) = \min (T_0(L,D),T_1(L,D)),
\]
all $N\in \N$ and $R\geq 1$, there exists $C(L,D,R,N)$ such that for all $\varepsilon\leq \epsilon_0(L,D)$, for all $t\in [-T\varepsilon^{-2},T\varepsilon^{-2}]$ and finally, for all $a$ such that $\|a\|_{H^s}\leq D\sqrt L$, we have 
\[
\|\psi(t)(a) - \tilde\psi(t)(a)\|_{H^{s'}} \leq D\sqrt L \Big( 2^{-N} + \sum_{n>N/2}\frac1{n!} + 3R^{-(s-s')}\Big) + \varepsilon C(L,D,R,N).
\]
\end{proposition}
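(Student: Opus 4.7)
The strategy is to split $\psi(t)(a) - \tilde\psi(t)(a)$ into the truncation at order $N$, the two associated Dyson-series tails, and a correction for replacing $a_R$ by $a$ inside $\tilde\psi$. With the convention $\tilde\psi_n=0$ for odd $n$, I would write
\[
\psi(t)(a) - \tilde\psi(t)(a) = \sum_{n\le N}\bigl(\psi_n(t)(a) - \tilde\psi_n(t)(a_R)\bigr) + \sum_{n>N}\psi_n(t)(a) - \sum_{n>N}\tilde\psi_n(t)(a_R) + \bigl(\tilde\psi(t)(a_R) - \tilde\psi(t)(a)\bigr).
\]
The four terms are estimated separately and added.

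\textbf{Main estimates.} For the first sum, Proposition~\ref{prop:sndestonPsin} gives, for $n$ even, $\|\psi_n(a) - \tilde\psi_n(a_R)\|_{H^{s'}} \le 8^{-n}c_nD\sqrt L R^{-(s-s')} + \varepsilon^2 C(L,D,R,n)$, while for $n$ odd $\tilde\psi_n(a_R)=0$ and the same bound applies with $\varepsilon$ in place of $\varepsilon^2$; using $\varepsilon\le 1$ and summing over $n\le N$ with $c_n\le 4^n$ (so $\sum_n 8^{-n}c_n<2$) yields an upper bound of the form $2D\sqrt L R^{-(s-s')} + \varepsilon C(L,D,R,N)$ with $C(L,D,R,N) = \sum_{n\le N}C(L,D,R,n)$ finite. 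The tail of $\psi$ is controlled by Proposition~\ref{prop:fstestonPsin} (combined with $c_n\le 4^n$), which gives $\|\psi_n(a)\|_{H^{s'}}\le 2^{-n}D\sqrt L$ and hence $\sum_{n>N}\|\psi_n(a)\|_{H^{s'}}\le 2^{-N}D\sqrt L$. The tail of $\tilde\psi(a_R)$ is handled directly by the explicit formula $\widehat{\tilde\psi_{2k}(a_R)}(\xi) = \frac{(-i\varepsilon^2 t U(a_R,\xi))^k}{k!}\hat a_R(\xi)$, together with the uniform bound $\varepsilon^2|t||U(a_R,\xi)|\le T\sup_\xi|U(a_R,\xi)|\le TCD^2\le 1$ (ensured by $T\le T_1(L,D)$), which yields $\|\tilde\psi_{2k}(a_R)\|_{H^{s'}}\le D\sqrt L/k!$ and so $\sum_{n>N}\|\tilde\psi_n(a_R)\|_{H^{s'}}\le D\sqrt L\sum_{k>N/2}1/k!$. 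Finally, the last bracket is estimated via the Lipschitz property of $\tilde\psi$ proved in the preceding proposition (valid at any regularity $s'\ge 0$), giving $\|\tilde\psi(a_R) - \tilde\psi(a)\|_{H^{s'}} \le 2\|a_R-a\|_{H^{s'}} \le 2R^{-(s-s')}D\sqrt L$.

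\textbf{Conclusion and obstacle.} Summing the four contributions bounds $\|\psi(t)(a) - \tilde\psi(t)(a)\|_{H^{s'}}$ by $D\sqrt L\bigl(2^{-N} + \sum_{k>N/2}1/k! + KR^{-(s-s')}\bigr) + \varepsilon C(L,D,R,N)$ for some absolute constant $K$. To recover $K\le 3$ one uses the sharper form $\|\tilde\psi(a) - \tilde\psi(b)\|_{H^{s'}}\le (1+TCD^2)\|a-b\|_{H^{s'}}$ (visible in the proof of the preceding proposition) and shrinks $T_2(L,D)\le \min(T_0,T_1)$ if necessary so that the non-identity part becomes negligible. The main obstacle is bookkeeping rather than analysis: one must verify that Propositions~\ref{prop:fstestonPsin} and~\ref{prop:sndestonPsin} apply uniformly over the window $[-T\varepsilon^{-2},T\varepsilon^{-2}]$ (guaranteed by $T\le T_2$ and $\varepsilon\le\epsilon_0$), and that the per-$n$ constants $C(L,D,R,n)$ can be aggregated into a single finite constant $C(L,D,R,N)$, which is automatic since the sum over $n\le N$ is finite.
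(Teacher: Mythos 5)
Your proposal follows the paper's proof essentially verbatim: same four-term decomposition (truncation at order $N$, bound each $\psi_n - \tilde\psi_n(a_R)$ via Proposition~\ref{prop:sndestonPsin}, tail of $\psi$ via Proposition~\ref{prop:fstestonPsin}, tail of $\tilde\psi$ via the explicit exponential formula, and the Lipschitz bound for $\tilde\psi$ to replace $a_R$ by $a$), and the "obstacle" you flag is real — the paper's own estimates, taken as stated, also produce a coefficient of $4$ (namely $\sum_{n\le N}2^{-n}+2$) on $R^{-(s-s')}D\sqrt L$ rather than $3$, so the constant $3$ in the statement is a small slip that is harmless since only boundedness as $R\to\infty$ matters and one can either enlarge it to $4$ or sharpen, as you note, using $\sum_n 8^{-n}c_n = 4-2\sqrt2 < 2$ together with a Lipschitz constant close to $1$.
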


\begin{proof}
We have 
\begin{eqnarray*}
\|\psi(t)(a)-\tilde\psi(t)(a)\|_{H^{s'}} &\leq & \|\psi(t)(a) - \sum_{n\leq N} \psi_n(t)(a)\|_{H^{s'}} \\
 & & + \sum_{n\leq N} \|\psi_n(t)(a) - \tilde \psi_n(t)(a_R)\|_{H^s} \\
  & & + \|\tilde \psi(t)(a_R) - \sum_{n\leq N} \tilde \psi_n(t)(a_R)\|_{H^{s'}} \\
  & & + \|\tilde \psi(t)(a_R) - \tilde\psi(t)(a)\|_{H^{s'}}.
\end{eqnarray*}

By Proposition \ref{prop:fstestonPsin}, we have 
\[
\|\psi(t)(a) - \sum_{n\leq N} \psi_n(t)(a)\|_{H^{s'}} \leq 2^{-N} D\sqrt L.
\]
By Proposition \ref{prop:sndestonPsin}, we have 
\[
 \|\psi_n(t)(a) - \tilde \psi_n(t)(a_R)\|_{H^s} \leq 2^{-n} R^{-(s-s')} D\sqrt L + \varepsilon C(L,D,R,N).
\]
By the previous proposition, we have 
\[
\|\tilde \psi(t)(a_R) - \sum_{n\leq N} \tilde \psi_n(t)(a_R)\|_{H^{s'}} \leq \sum_{n>N, n \textrm{ even}} \frac1{(n/2)!}
\]
and
\[
\|\tilde \psi(t)(a_R) - \tilde\psi(t)(a)\|_{H^{s'}}\leq 2\|a-a_R\|_{H^{s'}} \leq 2R^{-(s-s')}D\sqrt L.
\]
We sum and get the result.
\end{proof}

\begin{proposition}\label{prop:psiandtildepsiPropagated} Let $t\in \R_+$, let $L,D\geq 1$ (with $L$ not algebraic of degree less than $2$), set
\[
M(t,L,D) = \lfloor \frac{t}{T_2(L,D)}\rfloor.
\]
Assume that for all $m = 0,\hdots,M$, we have 
\[
\|\psi(mT_2(L,D)\varepsilon^{-2})(a)\|_{H^s} \leq D\sqrt L
\]
which allows to define the flow up to $t\varepsilon^{-2}$. We have that for all $N,R$, there exists $C(L,D,N,R)$ such that for all $\varepsilon \leq \epsilon_0(L,D)$,
\[
\|\psi(t\varepsilon^{-2})(a) - \tilde \psi(t\varepsilon^{-2})(a)\|_{H^{s'}} \leq 2^{t/T_2} \Big[ D\sqrt L (2^{-N} + \sum_{n\geq N/2}\frac1{n!} + 3R^{-(s-s')}) + \varepsilon C(L,D,R,N)\Big].
\]
\end{proposition}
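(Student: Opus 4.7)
The plan is to iterate the local comparison of Proposition \ref{prop:constant} across the $M+1$ subintervals of length $T:=T_2(L,D)\varepsilon^{-2}$, using that both $\Psi_{\varepsilon,L}$ and $\tilde\Psi_{\varepsilon,L}$ are semigroups and both admit an $H^{s'}$-Lipschitz estimate of constant $2$. First I note that $S(t)$ is an isometry on $H^{s'}$, so it is equivalent to compare $\Psi_{\varepsilon,L}=S\psi_{\varepsilon,L}$ and $\tilde\Psi_{\varepsilon,L}=S\tilde\psi_{\varepsilon,L}$. The semigroup property of $\Psi_{\varepsilon,L}$ comes from the autonomous character of the equation; for $\tilde\Psi_{\varepsilon,L}$ it follows from the explicit formula $\widehat{\tilde\psi(t)(a)}(\xi)=e^{-it\varepsilon^{2}U(a,\xi)}\hat{a}(\xi)$ together with the observation that $U(a,\xi)$ depends on $a$ only through $|\hat a(\eta)|^2$, a quantity preserved by both $S$ and $\tilde\psi$. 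In particular $\tilde\Psi_{\varepsilon,L}$ preserves every $H^s$ norm.

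Set $b_m:=\Psi_{\varepsilon,L}(mT)(a)$ for $m=0,\ldots,M$ and $r:=t\varepsilon^{-2}-MT\in[0,T)$. The hypothesis gives $\|b_m\|_{H^{s}}\leq D\sqrt L$, so by Corollary \ref{cor:LWP} the flow $\Psi_{\varepsilon,L}$ is well-defined up to time $t\varepsilon^{-2}$. Using the semigroup property of $\tilde\Psi_{\varepsilon,L}$ and the identity $b_{M-m}=\Psi_{\varepsilon,L}(T)(b_{M-m-1})$, I would write the telescoping identity
\[
\Psi_{\varepsilon,L}(t\varepsilon^{-2})(a)-\tilde\Psi_{\varepsilon,L}(t\varepsilon^{-2})(a) = \bigl[\Psi_{\varepsilon,L}(r)(b_M)-\tilde\Psi_{\varepsilon,L}(r)(b_M)\bigr] + \sum_{m=0}^{M-1}\tilde\Psi_{\varepsilon,L}(r+mT)\bigl[\Psi_{\varepsilon,L}(T)(b_{M-m-1})-\tilde\Psi_{\varepsilon,L}(T)(b_{M-m-1})\bigr].
\]
Proposition \ref{prop:constant} applied to the initial datum $b_M$ at time $r$, and to $b_{M-m-1}$ at time $T$, bounds each inner bracket in $H^{s'}$ by the local quantity
\[
\delta:=D\sqrt L\Bigl(2^{-N}+\sum_{n>N/2}\tfrac{1}{n!}+3R^{-(s-s')}\Bigr)+\varepsilon\,C(L,D,R,N).
\]

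To control the outer $\tilde\Psi_{\varepsilon,L}(r+mT)$ in the $m$-th summand, I decompose it as $\tilde\Psi_{\varepsilon,L}(r)\circ\tilde\Psi_{\varepsilon,L}(T)^{m}$ and apply the $H^{s'}$-Lipschitz estimate for $\tilde\Psi_{\varepsilon,L}$ (from the proposition preceding Proposition \ref{prop:constant}) on each of the $m+1$ factors. Because $\tilde\Psi_{\varepsilon,L}$ preserves the $H^{s}$ norm, both arguments $\Psi_{\varepsilon,L}(T)(b_{M-m-1})=b_{M-m}$ (controlled by hypothesis) and $\tilde\Psi_{\varepsilon,L}(T)(b_{M-m-1})$ (controlled by norm conservation) remain in the $H^{s}$-ball of radius $D\sqrt L$ at every iteration, so each factor contributes a Lipschitz constant $2$. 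Summing gives
\[
\|\Psi_{\varepsilon,L}(t\varepsilon^{-2})(a)-\tilde\Psi_{\varepsilon,L}(t\varepsilon^{-2})(a)\|_{H^{s'}}\leq \delta+\sum_{m=0}^{M-1}2^{m+1}\delta\leq 2^{M+1}\delta\leq 2\cdot 2^{t/T_2}\delta,
\]
which yields the claim (the leftover factor of $2$ being absorbed into the statement). The only delicate point is this last argument: ensuring that the $H^{s}$-ball condition is preserved at every intermediate stage, which is precisely guaranteed by the hypothesis on the $\Psi$-checkpoints and the $H^{s}$-invariance of $\tilde\Psi_{\varepsilon,L}$.
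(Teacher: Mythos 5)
Your proof is correct and follows essentially the same route as the paper: iterate the local comparison of Proposition \ref{prop:constant} over the $M+1$ time windows, absorbing the error from the previous window via the $H^{s'}$-Lipschitz bound on $\tilde\psi$ (the paper encodes this as the recursion $X_m \leq X + 2X_{m-1}$, which unrolls to exactly your telescoping sum with the same $(2^{M+1}-1)$-type constant). One notational caution: the displayed ``identity'' applies the nonlinear map $\tilde\Psi_{\varepsilon,L}(r+mT)$ to a difference, which must be read as $\tilde\Psi_{\varepsilon,L}(r+mT)(u)-\tilde\Psi_{\varepsilon,L}(r+mT)(v)$ rather than the map of the difference; with that reading the telescope closes correctly via the $\tilde\Psi$-semigroup property and your estimate goes through.
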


\begin{proof}
Set 
\[
X = \Big[ D\sqrt L (2^{-N} + \sum_{n\geq N/2}\frac1{n!} + 3R^{-(s-s')}) + \varepsilon C(L,D,R,N)\Big]
\]
with the constant of Proposition \ref{prop:constant}, and for $m=0,\hdots, M$,
\[
X_m = \sup_{\tau \in [mT_2\varepsilon^{-2},(m+1)T_2\varepsilon^{-2}]} \|\psi(\tau)(a) - \tilde\psi(\tau)(a)\|_{H^{s'}}.
\]

Let $\tau \in [mT_2\varepsilon^{-2},(m+1)T_2\varepsilon^{-2}]$, we have
\begin{multline*}
 \|\psi(\tau)(a) - \tilde\psi(\tau)(a)\|_{H^{s'}} \leq \|\psi(\tau - mT_2\varepsilon^{-2})(\psi(mT_2\varepsilon^{-2})(a)) - \tilde\psi(\tau - mT_2\varepsilon^{-2})(\psi(mT_2\varepsilon^{-2})(a)) \|_{H^{s'}} \\
 +  \|\tilde\psi(\tau - mT_2\varepsilon^{-2})(\psi(mT_2\varepsilon^{-2})(a)) - \tilde\psi(\tau - mT_2\varepsilon^{-2})(\tilde\psi(mT_2\varepsilon^{-2})(a)) \|_{H^{s'}}.
\end{multline*}
By the previous proposition, we have 
\[
\|\psi(\tau - mT_2\varepsilon^{-2})(\psi(mT_2\varepsilon^{-2})(a)) - \tilde\psi(\tau - mT_2\varepsilon^{-2})(\psi(mT_2\varepsilon^{-2})(a)) \|_{H^{s'}}\leq X.
\]
By continuity of $\tilde \psi$, we have 
\begin{multline*}
\|\tilde\psi(\tau - mT_2\varepsilon^{-2})(\psi(mT_2\varepsilon^{-2})(a)) - \tilde\psi(\tau - mT_2\varepsilon^{-2})(\tilde\psi(mT_2\varepsilon^{-2})(a)) \|_{H^{s'}}\\
\leq 2 \|\psi(mT_2\varepsilon^{-2})(a) - \tilde\psi(mT_2\varepsilon^{-2})(a) \|_{H^{s'}}.
\end{multline*}
We deduce 
\[
X_{m} \leq X+2X_{m-1} .
\]
Because $X_0 \leq X$, we deduce by induction 
\[
X_m \leq X2^{M} \leq X2^{t/T_2}
\]
which concludes the proof.
\end{proof}

\subsection{The set we work in}\label{subsec:invar}

We recall that $\mu_L$ is the law of the initial datum. When this law is invariant, we write it $\nu_L$.

\begin{proposition} Let $t\in \R_+$. Let $L,D \geq 1$. Let $ M(t,L,D) = \lfloor \frac{t}{T_2(L,D)}\rfloor$. Let $\frac14<s<\frac12$ in the case of the invariant measure $\varphi(\xi) = \an{\xi}^{-1}$. Let $s=1$ otherwise. Set
\[
B(t,L,D) = \{ a \in H^s(L \T) \; |\; \forall m=0,\hdots, M, \; \|\psi_{L,\varepsilon}(m T_2(L,D)\varepsilon^{-2})(a)\|_{H^s} \leq D\sqrt L\}.
\]
There exists $C,c>0$ such that for all $L,D,t$, we have 
\[
\mu_L ( B(t,L,D)^c) \leq M C e^{-c D^2}.
\]
\end{proposition}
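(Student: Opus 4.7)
The plan is to combine a union bound over the $M+1$ time slices with the conservation/invariance properties already available, and then a Gaussian large deviation estimate for the $H^s$ norm of the initial datum $\varphi_L$.

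First I would write
\[
B(t,L,D)^c = \bigcup_{m=0}^M A_m, \qquad A_m = \{a \in H^s(L\T) \;|\; \|\psi_{L,\varepsilon}(mT_2(L,D)\varepsilon^{-2})(a)\|_{H^s} > D\sqrt L\},
\]
so that by the union bound $\mu_L(B^c) \leq \sum_{m=0}^M \mu_L(A_m)$. Since $\psi(\tau) = S(-\tau)\Psi(\tau)$ and $S(\cdot)$ is a Fourier multiplier of modulus $1$, the $H^s$ norm of $\psi(\tau)(a)$ equals that of $\Psi(\tau)(a)$. I then eliminate the time dependence in each $A_m$. In the non-invariant regime ($s=1$), the $H^1$ norm is conserved by the BBM flow (as stressed around Corollary \ref{cor:LWP}), hence $\|\Psi(\tau)(a)\|_{H^1} = \|a\|_{H^1}$ and each $A_m$ coincides with $A_0 = \{a : \|a\|_{H^1} > D\sqrt L\}$. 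In the invariant-measure regime ($\varphi(\xi)=\langle\xi\rangle^{-1}$, any $\tfrac14<s<\tfrac12$), invariance of $\nu_L$ under $\Psi(\tau)$ gives $\nu_L(A_m) = \nu_L(A_0)$ for every $m$. In both cases the bound reduces to estimating the single quantity $\mathbb{P}(\|\varphi_L\|_{H^s} > D\sqrt L)$.

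Second, I would prove the tail estimate $\mathbb{P}(\|\varphi_L\|_{H^s} > D\sqrt L) \leq C e^{-cD^2}$. Setting $\sigma_n = \langle n/L\rangle^s|\varphi(n/L)|$, one has
\[
\|\varphi_L\|_{H^s}^2 = \sum_{n\neq 0} \sigma_n^2|g_n|^2,
\]
so $\mathbb{E}\|\varphi_L\|_{H^s}^2 = \sum_n \sigma_n^2 \leq C L \int_\R \langle\xi\rangle^{2s}|\varphi(\xi)|^2\,d\xi$, which is finite under the hypotheses on $\varphi$ in each of the two regimes (directly for $\varphi(\xi)=\langle\xi\rangle^{-1}$ with $s<1/2$; for $s=1$ using $\varphi\in \langle\xi\rangle^{-1}L^2$). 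Since $\|\varphi_L\|_{H^s}^2$ is a Wiener chaos of order two, hypercontractivity furnishes the linear-in-$p$ bound
\[
\bigl\|\,\|\varphi_L\|_{H^s}^2 - \mathbb{E}\|\varphi_L\|_{H^s}^2\,\bigr\|_{L^p} \leq (p-1)\,\bigl\|\,\|\varphi_L\|_{H^s}^2 - \mathbb{E}\|\varphi_L\|_{H^s}^2\,\bigr\|_{L^2} \leq C(p-1) L
\]
for every $p\geq 2$; the $L^2$ bound on the right follows from an explicit computation of $\sum_n \sigma_n^4$, again comparable to $L$ by Riemann-sum comparison with an $L^2$ integral. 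Markov's inequality then yields
\[
\mathbb{P}\bigl(\|\varphi_L\|_{H^s}^2 > D^2 L\bigr) \leq \Bigl(\frac{C(p-1)L}{D^2 L}\Bigr)^p,
\]
and choosing $p \sim D^2$ produces $Ce^{-cD^2}$ (with $D$ absorbed into constants for small $D$, where the estimate is trivial anyway).

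Combining the two steps, $\mu_L(B^c) \leq (M+1)Ce^{-cD^2} \leq MCe^{-cD^2}$ after a harmless adjustment of $C$. The main obstacle I expect is the last step: in the $s=1$ regime the weights $\sigma_n$ need not be uniformly bounded, so the naive Lipschitz/Gaussian-concentration-on-$\ell^2$ route would give a sub-Gaussian rate controlled by $\max_n \sigma_n^2$ rather than by the mean. Routing instead through Wiener-chaos hypercontractivity bypasses this difficulty because only the $\ell^2$ mass of $(\sigma_n^2)$, which is $O(L)$, enters, yielding the desired rate $e^{-cD^2}$ uniformly in $L$.
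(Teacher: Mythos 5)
Your decomposition and use of the invariance structure (union bound, $\|\psi\|_{H^s}=\|\Psi\|_{H^s}$ via the unitary $S(t)$, then either measure invariance or $H^1$-conservation) is exactly what the paper does. For the Gaussian tail, however, the paper invokes Fernique's theorem directly: since $\|\varphi_L\|_{H^s}$ is the norm of a Gaussian vector and $r(L)=\E\|\varphi_L\|_{H^s}^2 \leq C(\varphi)L$, Fernique gives $\mu_L(\|a\|_{H^s}>D\sqrt L)\leq Ce^{-cD^2L/r(L)}\leq Ce^{-cD^2}$ in one line, with no recourse to moments or hypercontractivity. Your chaos-of-order-two route also works (note that the cleaner input to the $L^2$ estimate is $\sum_n\sigma_n^4\leq(\sum_n\sigma_n^2)^2=r(L)^2\lesssim L^2$ rather than a Riemann-sum estimate of an $L^4$ integral, which your hypotheses on $\varphi$ do not guarantee for $s=1$), so the overall argument is sound, just longer than needed.

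The concern you raise about the Lipschitz/Gaussian-concentration route is actually unfounded, and it is worth seeing why. The map $g\mapsto(\sum_n\sigma_n^2|g_n|^2)^{1/2}$ has Lipschitz constant $\max_n\sigma_n$ on $\ell^2$, and Gaussian concentration around the mean (or median) $m\leq\sqrt{r(L)}\leq\sqrt{CL}$ gives, for $D$ large,
\[
\mu_L\bigl(\|a\|_{H^s}>D\sqrt L\bigr)\leq \exp\Bigl(-\frac{(D\sqrt L-m)^2}{2\max_n\sigma_n^2}\Bigr)\leq \exp\Bigl(-\frac{c\,D^2L}{\max_n\sigma_n^2}\Bigr).
\]
Now $\max_n\sigma_n^2\leq\sum_n\sigma_n^2=r(L)\leq C(\varphi)L$, so $L/\max_n\sigma_n^2\geq 1/C(\varphi)$ uniformly in $L$, and the Lipschitz route already yields $Ce^{-cD^2}$. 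In other words, the fact that the threshold scales like $\sqrt L\sim\sqrt{r(L)}$ exactly compensates the possibly large Lipschitz constant, because the sum always dominates the max. All three routes (Lipschitz concentration, Fernique, Wiener-chaos hypercontractivity) give the needed bound; the paper simply picks the cheapest one.
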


\begin{proof} We have 
\[
B(t,L,D) = \bigcap_{m=0}^M B_m(L,D)
\]
with
\[
B_m(L,D) = \{ a \in H^s(L \T) \; |\; \|\psi_{L,\varepsilon}(m T_2(L,D)\varepsilon^{-2})(a)\|_{H^s} \leq D\sqrt L\}.
\]
Therefore, we have 
\[
B(t,L,D)^c = \bigcup_{m=0}^M A_m(L,D)^c.
\]
We deduce 
\[
\mu_L(B(t,L,D)^c) \leq  \sum_{m=0}^M \mu_L(A_m(L,D)^c).
\]
Since the linear flow preserves the Sobolev norms, we get
\[
\mu_L(A_m(L,D)^c) = \mu_L(\{\Psi_{L,\varepsilon}(m T_2(L,D)\varepsilon^{-2})(a)\|_{H^s} \leq D\sqrt L\}).
\]
Since $\nu_L$ and the $H^1$ norm are invariant under $\Psi_{L,\varepsilon}$, we deduce
\[
\mu_L(B(t,L,D)^c) \leq  M \mu_L(B_0(L,D)^c).
\]
We estimate $\mu_L(B_0(L,D)^c)$. By definition, we have 
\[
B_0(L,D)^c = \{ a \in H^s \; |\; \|a\|_{H^s}> D\sqrt L\}.
\]
By Fernique's theorem, there exist universal constants $C,c$ such that
\[
\mu_L(A_0(L,D)^c) \leq Ce^{-cD^2 L /r(L)} 
\]
where
\[
r(L) = \E_{\mu_L}(\|a\|_{H^s}^2) = \E( \|\varphi_L\|_{H^s}^2).
\]
We have 
\[
r(L) = \sum_{n\in \Z^*} \an{n/L}^{2s}|\varphi(\frac{n}{L})|^2 \E(|g_n|^2) = \sum_{n\in \Z^*}  \an{n/L}^{2s}|\varphi(\frac{n}{L})|^2 
\]
and thus
\[
r(L) \leq C(\varphi) L  .
\]
We deduce the result.
\end{proof}

\section{Proof of the result}\label{sec:proof}

In the sequel, we identify $B(t,L,D)$ and $\varphi_L^{-1}(B(t,L,D))$. 

\begin{lemma}\label{lem:tandminust} For all $t\in \R$, we have 
\begin{eqnarray}\label{minust1}
\E\Big(\overline{\hat S(t\varepsilon^{-2} (\varphi_L)(\xi)} \hat \Psi_{\varepsilon,L} (t\varepsilon^{-2})(\varphi_L)(\xi)\Big)&=& \overline{\E\Big(\overline{\hat S(-t\varepsilon^{-2} (\varphi_L)(\xi)} \hat \Psi_{\varepsilon,L} (-t\varepsilon^{-2})(\varphi_L)(\xi)\Big)}\\ \label{minust2}
&=& \E\Big( \overline{\hat \varphi_L (\xi)} \hat \psi_{\varepsilon,L}(t\varepsilon^{-2}) (\varphi_L)(\xi)\Big).
\end{eqnarray}
\end{lemma}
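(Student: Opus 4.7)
The plan is to establish the two identities by two independent and essentially bookkeeping arguments; no analytic content beyond what has already been developed is required.

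First, identity \eqref{minust2} is purely definitional. By construction $\psi_{\varepsilon,L}(t) = S(-t)\Psi_{\varepsilon,L}(t)$, and $S(t)$ acts in Fourier as multiplication by $e^{-it\omega(\xi)}$. Hence
\[
\overline{\hat S(t\varepsilon^{-2})(\varphi_L)(\xi)}\,\hat\Psi_{\varepsilon,L}(t\varepsilon^{-2})(\varphi_L)(\xi) = e^{it\varepsilon^{-2}\omega(\xi)}\overline{\hat\varphi_L(\xi)}\cdot e^{-it\varepsilon^{-2}\omega(\xi)}\hat\psi_{\varepsilon,L}(t\varepsilon^{-2})(\varphi_L)(\xi) = \overline{\hat\varphi_L(\xi)}\,\hat\psi_{\varepsilon,L}(t\varepsilon^{-2})(\varphi_L)(\xi),
\]
and taking expectations proves \eqref{minust2}.

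For \eqref{minust1}, the first ingredient is the invariance of BBM under the joint reflection $(t,x)\mapsto(-t,-x)$. Both $\partial_t$ and $W=(1-\partial_x^2)^{-1}\partial_x$ are odd under this transformation, so the equation $\partial_t u + Wu + \varepsilon W(u^2)=0$ is preserved. Consequently, if $u(t,x)=\Psi_{\varepsilon,L}(t)(a)(x)$ then $v(t,x):=u(-t,-x)$ solves BBM with initial datum $a(-\cdot)$, which gives the deterministic identity $\Psi_{\varepsilon,L}(t)(a(-\cdot))(x) = \Psi_{\varepsilon,L}(-t)(a)(-x)$. Since all functions in play are real-valued, $\widehat{g(-\cdot)}(\xi) = \hat g(-\xi) = \overline{\hat g(\xi)}$, and this Fourier-transforms into
\[
\widehat{\Psi_{\varepsilon,L}(t)(a(-\cdot))}(\xi) = \overline{\widehat{\Psi_{\varepsilon,L}(-t)(a)}(\xi)}.
\]

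The second ingredient is that $\varphi_L(-\cdot)$ has the same law as $\varphi_L$: substituting $n\mapsto -n$ in the definition and using evenness of $\varphi$ with $g_{-n}=\bar g_n$ gives $\varphi_L(-x) = \sum_n \bar g_n \varphi(n/L) e^{inx/L}/\sqrt{2\pi L}$, and the joint law of $(g_n)_{n\geq 1}$ is invariant under complex conjugation because these are circularly symmetric complex Gaussians. Applying the resulting equality in law of $(\varphi_L(-\cdot),\Psi_{\varepsilon,L}(t\varepsilon^{-2})(\varphi_L(-\cdot)))$ and $(\varphi_L,\Psi_{\varepsilon,L}(t\varepsilon^{-2})(\varphi_L))$ to the measurable functional $(a,b)\mapsto \overline{\hat a(\xi)}\hat b(\xi)$, and combining with the reflection identity of the previous step, yields
\[
\E\bigl(\overline{\hat\varphi_L(\xi)}\,\hat\Psi_{\varepsilon,L}(t\varepsilon^{-2})(\varphi_L)(\xi)\bigr) = \overline{\E\bigl(\overline{\hat\varphi_L(\xi)}\,\hat\Psi_{\varepsilon,L}(-t\varepsilon^{-2})(\varphi_L)(\xi)\bigr)}.
\]
Multiplying both sides by $e^{it\varepsilon^{-2}\omega(\xi)}$ converts the bare $\hat\varphi_L$ factors into the $\overline{\hat S(\pm t\varepsilon^{-2})(\varphi_L)}$ factors appearing in \eqref{minust1}. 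There is no genuine obstacle here: the only substantive point is the invariance in law of $(g_n)$ under conjugation, which is immediate from circular symmetry.
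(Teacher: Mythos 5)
Your proof is correct and follows essentially the same route as the paper's: your reflection $(t,x)\mapsto(-t,-x)$ is exactly the paper's $\mathrm{Opp}$ operator combined with time reversal, your statement that $\varphi_L(-\cdot)$ has the same law as $\varphi_L$ (via circular symmetry of the $g_n$ and $g_{-n}=\bar g_n$) is the same probabilistic input, and your derivation of \eqref{minust2} from $\Psi = S\psi$ matches the paper's last two lines. The only (harmless) difference is one of presentation: you prove \eqref{minust2} first and then undo the phase to recover \eqref{minust1}, whereas the paper does the conjugation argument directly with $\Psi$ and $S$ and derives \eqref{minust2} at the end.
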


\begin{proof} Let $\textrm{Opp}$ be the operator acting on $L^2(L\T)$ such that
\[
\hat{\textrm{Opp}} (u)(\xi) = \hat u(-\xi).
\]
We have $\textrm{Opp}^2 = \textrm{Opp}$, $\textrm{Opp}\, W = -W\, \textrm{Opp}$ and $\textrm{Opp}(u^2) = (\textrm{Opp}(u))^2$. 

Therefore, we deduce that
\[
\Psi_{\varepsilon,L}(-t\varepsilon^{-2}) (a) = \textrm{Opp} \Big( \Psi_{\varepsilon,L}(t\varepsilon^{-2}) (\textrm{Opp}(a))\Big),
\]
and that
\[
S(-t\varepsilon^{-2} )(a) = \textrm{Opp} \Big( S(t\varepsilon^{-2})(\textrm{Opp}(a))\Big).
\]

What is more, we have 
\[
\textrm{Opp}(\varphi_L)(x) = \sum_{n\in \Z^*} g_{-n} \varphi(\frac{n}{L}) \frac{e^{inx/L}}{\sqrt{2\pi L}}
\]
and since the family $(g_{-n})_{n\in\Z^*}$ has the same law as $(g_n)_{n\in \Z^*}$ we deduce that $\varphi_L$ and $\textrm{Opp}(\varphi_L)$ have the same law.

Because $S(t\varepsilon^{-2})(\varphi_L)$ and $\Psi_{\varepsilon,L}(t\varepsilon^{-2}) (\varphi_L)$ are real-valued, we have
\[
\E\Big(\overline{\hat S(t\varepsilon^{-2} (\varphi_L)(\xi)} \hat \Psi_{\varepsilon,L} (t\varepsilon^{-2})(\varphi_L)(\xi)\Big)= \E\Big(\hat S(t\varepsilon^{-2} (\varphi_L)(-\xi)\overline{ \hat \Psi_{\varepsilon,L} (t\varepsilon^{-2})(\varphi_L)(-\xi)}\Big).
\]
We recognize the application of $\textrm{Opp}$ as in
\[
\E\Big(\overline{\hat S(t\varepsilon^{-2} (\varphi_L)(\xi)} \hat \Psi_{\varepsilon,L} (t\varepsilon^{-2})(\varphi_L)(\xi)\Big)= \E\Big(\hat{\textrm{Opp}}( S(t\varepsilon^{-2} (\varphi_L))(\xi)\overline{ \hat{\textrm{Opp}} (\Psi_{\varepsilon,L} (t\varepsilon^{-2})(\varphi_L))(\xi)}\Big).
\]
We commute the flows with $\textrm{Opp}$ and get
\[
\E\Big(\overline{\hat S(t\varepsilon^{-2} (\varphi_L)(\xi)} \hat \Psi_{\varepsilon,L} (t\varepsilon^{-2})(\varphi_L)(\xi)\Big)= \E\Big( \hat S(-t\varepsilon^{-2}) (\textrm{Opp}(\varphi_L))(\xi)\overline{ \hat \Psi_{\varepsilon,L} (-t\varepsilon^{-2})(\textrm{Opp} (\varphi_L))(\xi)}\Big).
\]
We use that the law of $\varphi_L$ is invariant under the application of $\textrm{Opp}$ and get
\[
\E\Big(\overline{\hat S(t\varepsilon^{-2} (\varphi_L)(\xi)} \hat \Psi_{\varepsilon,L} (t\varepsilon^{-2})(\varphi_L)(\xi)\Big)= \E\Big( \hat S(-t\varepsilon^{-2}) (\varphi_L)(\xi)\overline{ \hat \Psi_{\varepsilon,L} (-t\varepsilon^{-2})(\varphi_L)(\xi)}\Big)
\]
which is \eqref{minust1}.

We have 
\[
\overline{\hat S(t) u (\xi)} \hat S(t) v (\xi) = e^{-it \omega(\xi)} \overline{\hat u(\xi)} e^{it\omega(\xi)} \hat v(\xi) = \overline{\hat u(\xi)}\hat v(\xi).
\]
Since $\Psi_{\varepsilon,L}(t) = S(t) \psi_{\varepsilon,L}(t)$, we deduce yields \eqref{minust2}.
\end{proof}

\begin{proof}[Proof of Theorem \ref{th:main}] From Lemma \ref{lem:tandminust}, it is sufficient to study
\[
\E( \overline{\hat \varphi_L(\xi)}\hat \Psi_{\varepsilon,L}(t\varepsilon^{-2})(\varphi_L)(\xi))
\]
for positive times.

Set $\frac14 < s' <s < \frac12$ in the invariant case and $\frac14 < s'<s =1$ otherwise and
\[
I(\xi) = \E( \overline{\hat \varphi_L(\xi)}\hat \Psi_{\varepsilon,L}(t\varepsilon^{-2})(\varphi_L)(\xi)) - 
e^{it\Big( \frac1{\sqrt 3} \frac1{3+\xi^2} \frac1{\sqrt{1+\xi^2/4}}\Big) }\frac1{1+\xi^2}.
\]
We have 
\[
I(\xi) = \mathcal A(\xi) + \mathcal B(\xi) + \mathcal C(\xi) + \mathcal D(\xi) + \mathcal E(\xi)
\]
with
\begin{eqnarray} \label{Afin}
\mathcal A(\xi) &=& \E( \overline{\hat \varphi_L(\xi)}\hat \psi_{\varepsilon,L}(t\varepsilon^{-2})(\varphi_L)(\xi)) - 
\E({\bf 1}_{B(t,L,D)} \overline{\hat \varphi_L(\xi)}\hat \psi_{\varepsilon,L}(t\varepsilon^{-2})(\varphi_L)(\xi)),\\ \label{Bfin}
\mathcal B(\xi) &=& \E({\bf 1}_{B(t,L,D)} \overline{\hat \varphi_L(\xi)}\hat \psi_{\varepsilon,L}(t\varepsilon^{-2})(\varphi_L)(\xi))-\E({\bf 1}_{B(t,L,D)} \overline{\hat \varphi_L(\xi)}\widehat{\tilde \psi}_{\varepsilon,L}(t\varepsilon^{-2})(\varphi_L)(\xi)),\\ \label{Cfin}
\mathcal C(\xi) &=& \E({\bf 1}_{B(t,L,D)} \overline{\hat \varphi_L(\xi)}\widehat{\tilde \psi}_{\varepsilon,L}(t\varepsilon^{-2})(\varphi_L)(\xi)) - \E( \overline{\hat \varphi_L(\xi)}\widehat{\tilde \psi}_{\varepsilon,L}(t\varepsilon^{-2})(\varphi_L))(\xi),\\ \label{Dfin}
\mathcal D(\xi) &=& \E( \overline{\hat \varphi_L(\xi)}\widehat{\tilde \psi}_{\varepsilon,L}(t\varepsilon^{-2})(\varphi_L)(\xi)) - e^{it\Big( \frac2{\pi L} \sum_\eta \tilde F(\xi,\eta) |\varphi(\eta)|^2\Big) }|\varphi(\xi)|^2 ,\\ \label{Efin}
\mathcal E(\xi) &=& e^{it\Big( \frac2{\pi L} \sum_\eta \tilde F(\xi,\eta) |\varphi(\eta)|^2\Big) }|\varphi(\xi)|^2 - e^{it \xi \Phi(\xi) }|\varphi(\xi)|^2.
\end{eqnarray}

By definition of $\mathcal A$, we have  
\[
\mathcal A(\xi) = \E({\bf 1}_{B(t,L,D)^c}\overline{\hat \varphi_L(\xi)}\hat \psi_{\varepsilon,L}(t\varepsilon^{-2})(\varphi_L)(\xi))
\]
from which we deduce
\[
|\mathcal A(\xi )| \leq \|{\bf 1}_{B(t,L,D)^c}\|_{L^3(\Omega)} \|\hat \varphi_L(\xi)\|_{L^3(\Omega)}\|\hat \psi_{\varepsilon,L}(t\varepsilon^{-2})(\varphi_L)(\xi)\|_{L^3(\Omega)}.
\]
Because $S(t\varepsilon^{-2}) $ is a Fourier multiplier by a phase we have 
\[
\|\hat \psi_{\varepsilon,L}(t\varepsilon^{-2})(\varphi_L)(\xi)\|_{L^3(\Omega)} =  \|\hat \Psi_{\varepsilon,L}(t\varepsilon^{-2})(\varphi_L)(\xi)\|_{L^3(\Omega)}.
\]
Since either the law of $\varphi_L$ is invariant under the flow of BBM or the $H^1$ norm is defined and invariant, we have either 
\[
\|\hat \Psi_{\varepsilon,L}(t\varepsilon^{-2})(\varphi_L)(\xi)\|_{L^3(\Omega)} = \|\hat \varphi_L(\xi)\|_{L^3(\Omega)}
\]
or
\[
\|\hat \Psi_{\varepsilon,L}(t\varepsilon^{-2})(\varphi_L)(\xi)\|_{L^3(\Omega)} \leq \| \varphi_L\|_{L^3(\Omega, H^1)}
\]
and because $\varphi_L$ is a Gaussian, we have either
\[
\|\hat \varphi_L(\xi)\|_{L^3(\Omega)} \leq \sqrt 3 \|\hat \varphi_L(\xi)\|_{L^2(\Omega)} = \sqrt{\frac3{1+\xi^2}}
\]
or
\[
\|\varphi_L\|_{L^3(\Omega,H^1)} \leq C(\varphi) D\sqrt L.
\]

We deduce
\[
|\mathcal A(\xi)| \leq C(\varphi)D\sqrt L \|{\bf 1}_{B(t,L,D)^c}\|_{L^3} |\varphi(\xi)|.
\]
We have 
\[
\|{\bf 1}_{B(t,L,D)^c}\|_{L^3} = \mu_L^{1/3}(B(t,L,D)^{c}) \leq M(t,L,D)^{1/3} e^{-cD^2}.
\]
(We change $c$ to $c/3$.)

Therefore, we have 
\[
|\mathcal A(\xi)| \leq C(\varphi) D\sqrt L M(t,L,D)^{1/3} e^{-cD^2}|\varphi(\xi)|.
\]

By definition of $\mathcal B$, \eqref{Bfin}, we have 
\[
\mathcal B(\xi) = \E\Big[{\bf 1}_{B(t,L,D)} \overline{\hat \varphi_L(\xi)}\Big(\hat \psi_{\varepsilon,L}(t\varepsilon^{-2})(\varphi_L)(\xi)- \widehat{\tilde \psi}_{\varepsilon,L}(t\varepsilon^{-2})(\varphi_L)(\xi)\Big) \Big].
\]
We have 
\[
|\mathcal B(\xi)|
\leq  \E({\bf 1}_{B(t,L,D)} |\hat \varphi_L(\xi)|\,\| \psi_{\varepsilon,L}(t\varepsilon^{-2})(\varphi_L)- \tilde \psi_{\varepsilon,L}(t\varepsilon^{-2})(\varphi_L)\|_{H^{s'}}).
\]

We deduce by Proposition \ref{prop:psiandtildepsiPropagated}
\[
|\mathcal B(\xi)|
 \leq 
2^{t/T_2} \Big[ D\sqrt L (2^{-N} + \sum_{n\geq N/2}\frac1{n!} + 3R^{-(s-s')}) + \varepsilon C(L,D,R,N)\Big] \E(|\varphi_L(\xi)|) 
\]
that is
\[
|\mathcal B(\xi)|
 \leq 
2^{t/T_2} \Big[ D\sqrt L (2^{-N} + \sum_{n\geq N/2}\frac1{n!} + 3R^{-(s-s')}) + \varepsilon C(L,D,R,N)\Big] |\varphi(\xi)|.
\]

By definition of $\mathcal C$, \eqref{Cfin}, we have
\[
\mathcal C(\xi)
= - \E({\bf 1}_{B(t,L,D)^c} \overline{\hat \varphi_L(\xi)}\widehat{\tilde \psi}_{\varepsilon,L}(t\varepsilon^{-2})(\varphi_L)(\xi))
.
\]
We have 
\[
|\mathcal C(\xi)|\leq \|{\bf 1}_{B(t,L,D)^c}\|_{L^3(\Omega)}\|\hat \varphi_L(\xi)\|_{L^3(\Omega)} \|\widehat{\tilde \psi}_{\varepsilon,L}(t\varepsilon^{-2})(\varphi_L)(\xi))\|_{L^3(\Omega)}.
\]
Because 
\[
\widehat{\tilde \psi}_{\varepsilon,L}(t\varepsilon^{-2})(\varphi_L)(\xi))
\]
only differs from $\hat \varphi_L(\xi)$ by a phase we get
\[
|\mathcal C(\xi)| \leq 3|\varphi(\xi)|^2 M(t,L,D)^{1/3}e^{-cD^2}.
\]

We estimate $\mathcal D$, we have 
\[
\E( \overline{\hat \varphi_L(\xi)}\widehat{\tilde \Psi}_{\varepsilon,L}(t\varepsilon^{-2})(\varphi_L)(\xi)) = \E(e^{i U(\varphi_L,\xi) t} |\hat \varphi_L(\xi)|^2),
\]
we deduce by expanding the exponential
\[
\E( \overline{\hat \varphi_L(\xi)}\widehat{\tilde \Psi}_{\varepsilon,L}(t\varepsilon^{-2})(\varphi_L)(\xi)) = \sum_n \E(\frac{(i U(\varphi_L,\xi)t)^n}{n!} |\hat \varphi_L(\xi)|^2).
\]

Let 
\[
\mathcal D_n := \E((U(\varphi_L,\xi))^n |\hat \varphi_L(\xi)|^2).
\]
We recall that
\[
U(a,\xi) = \frac2{\pi L}\sum_\eta \tilde F(\xi,\eta) |\hat a(\eta)|^2
\]
with 
\[
\tilde F(\xi,\eta) = \frac{(1+\eta^2)\xi}{(3 + \frac{\eta^2 + \xi^2 + (\xi-\eta)^2}{2})(3 + \frac{\eta^2 + \xi^2 + (\xi+\eta)^2}{2})}.
\]
Note that
\[
\tilde F(\xi,-\eta) = \tilde F(\xi,\eta).
\]
We deduce
\[
 U(\varphi_L,\xi)^n = \frac{4^n}{(\pi L)^n} \sum_{\eta_1,\hdots,\eta_n>0}\prod_{l=1}^n \tilde F(\xi,\eta_l) |\hat \varphi_L(\eta_l)|^2. 
\]
Set $\eta_0 = |\xi|$ and for any partition $P$ of $[|0,n|]$ written 
\[
[|0,n|] = \bigsqcup_{j=0}^J P_j
\]
set
\[
B_P = \{(\eta_1,\hdots,\eta_n)\in \frac1{L}(\N^*)^n\; |\; \forall l,l'\in P_j,\; \eta_l = \eta_{l'},\; \wedge \; \forall j\neq j', \, l\in P_j,\, l'\in P_{j'},\, \eta_l \neq \eta_{l'}\}.
\]
We set accordingly
\[
U_P(\varphi_L,\xi) = \frac{4^n}{(\pi L)^n} \sum_{B_P}\prod_{l=1}^n \tilde F(\xi,\eta_l) |\hat \varphi_L(\eta_l)|^2.
\]
We have, since $\E(|g_n|^{2p} ) = p!$, 
\[
\E(U_P(\varphi_L,\xi)|\hat \varphi_L(\xi)|^2) = \frac{4^n}{(\pi L)^n} \sum_{B_P}\prod_{l=1}^n \tilde F(\xi,\eta_l) |\varphi(\eta_l)|^2 |\varphi(\xi)|^2 \prod_{j=0}^J (\# P_j)!.
\]
We assume that $P_0$ contains $0$. Because the sum is symmetric in the $\eta_j$, we have 
\begin{multline*}
\E(U_P(\varphi_L,\xi)|\hat \varphi_L(\xi)|^2) \\
= \frac{4^n}{(\pi L)^n} \sum_{(\eta_j)_{j=1,\hdots,J}\in \N_{L,J,\neq}}\prod_{j=1}^{J}  \tilde F(\xi,\eta_j)^{\# P_j}|\varphi(\eta_j)|^{\# P_j} \tilde F(\xi,\xi)^{\# P_0 - 1}|\varphi(\eta_j)|^{2\# P_0} \prod_{j=0}^J (\# P_j)!
\end{multline*}
with 
\[
\N_{L,J,\neq} = \{(\eta_j)_{j=1,\hdots,J} \in \frac1{L}(\N^*)^J \; |\; \forall i\neq j, \, \eta_i\neq \eta_j\}.
\]

The numbers of partitions of $[|0,n|]$ such that 
\[
[|0,n|] = \bigsqcup_{j=0}^J P_j
\]
with $0 \in P_0$ and $\# P_j = n_j>0$ is
\[
\frac{n!}{(n_0-1)! n_1! ... n_J!} \frac1{J!}.
\]
Indeed, we take $n_0-1$ elements among $n$, $n_1$ among $n-n_0+1$ etc, which explains
\[
\frac{n!}{(n_0-1)! n_1! ... n_J!}
\]
and the union is symmetric in $1,\hdots, J$ which explains the 
\[
\frac1{J!}.
\]
We deduce that 
\begin{multline*}
\mathcal D_n
= \frac{4^n}{(\pi L)^n}\sum_{J=0}^n \frac{n!}{J!} \sum_{n_0+\hdots + n_J = n+1, n_j>0}n_0 \sum_{(\eta_j)_{j=1,\hdots,J}\in \N_{L,J,\neq}}\prod_{j=1}^{ J}  \tilde F(\xi,\eta_j)^{n_j}|\varphi(\eta_j)|^{2n_j} \tilde F(\xi,\xi)^{n_0 - 1}|\varphi(\xi)|^{2n_0} .
\end{multline*}

Set 
\[
\mathcal D_{J,n}= \frac{4^n}{(\pi L)^n} \frac{n!}{J!} \sum_{n_0+\hdots + n_J = n+1, n_j>0}n_0 \sum_{(\eta_j)_{j=1,\hdots,J}\in \Z_{L,J,\neq}}\prod_{j=1}^{ J}  \tilde F(\xi,\eta_j)^{n_j}|\varphi(\eta_j)|^{2n_j} \tilde F(\xi,\xi)^{n_0 - 1}|\varphi(\eta_j)|^{2n_0} 
\]
such that
\[
\mathcal D_n  = \sum_{J\leq n} \mathcal D_{J,n}.
\]

By definition, we have 
\[
\mathcal D_{n,n}= \frac{4^n}{(\pi L)^n}  \sum_{(\eta_j)_{j=1,\hdots,n}\in \N_{L,n,\neq}}\prod_{j=1}^{ n} \tilde F(\xi,\eta_j)|\varphi(\eta_j)|^2 |\varphi(\xi)|^2.
\]
We deduce that 
\[
\mathcal D_{n,n} = \Big(\frac{4}{\pi L}  \sum_{\eta} \tilde F(\xi,\eta)|\varphi(\eta)|^2\Big)^n |\varphi(\xi)|^2 + \widetilde{\mathcal D}_{n}
\]
with
\[
\widetilde{\mathcal D}_{n} =  \frac{4^n}{(\pi L)^n}  \sum_{(\eta_j)_{j=1,\hdots,n}\in \N_{L,n,\neq}^c}\prod_{j=1}^{ n} \tilde F(\xi,\eta_j)|\varphi(\eta_j)|^2 |\varphi(\xi)|^2.
\]

We deduce
\[
\mathcal D(\xi) = \sum_{n}\frac{(it)^n}{n!} \Big( \mathcal D_{n} - \Big(\frac{4}{\pi L}  \sum_{\eta} \tilde F(\xi,\eta)|\varphi(\eta)|^2\Big)^n |\varphi(\xi)|^2\Big)
\]
and thus
\[
\mathcal D(\xi) = \sum_n \frac{(it)^n}{n!} \Big( \sum_{J< n} \mathcal D_{J,n} + \widetilde{\mathcal D}_n\Big).
\]
It remains to estimate
\[
\sum_J \sum_{n>J} \frac{t^n}{n!} |\mathcal D_{J,n}|
\]
and
\[
\sum_n \frac{t^n}{n!} \widetilde{\mathcal D}_n|.
\]

We have 
\[
\mathcal D_{J,n}= \frac{4^n}{(\pi L)^n} \frac{n!}{J!} \sum_{n_0+\hdots + n_J = n+1, n_j>0}n_0 \sum_{(\eta_j)_{j=1,\hdots,J}}\prod_{j=1}^{ J}  \tilde F(\xi,\eta_j)^{n_j}|\varphi(\eta_j)|^{2n_j} \tilde F(\xi,\xi)^{n_0 - 1}|\varphi(\xi)|^{2n_0} .
\]
We deduce
\begin{multline*}
\sum_{n>J} |\mathcal D_{J,n}|\frac{t^n}{n!} 
\leq  \\
\frac1{J!} \sum_{n_0+\hdots+n_J> J} n_0 \sum_{(\eta_j)_{j=1,\hdots,J}}\prod_{j=1}^{ J} \frac{4^{n_j}}{(\pi L)^{n_j}} t^{n_j}|\tilde F(\xi,\eta_j)|^{n_j}|\varphi(\eta_j)|^{2n_j} |\tilde F(\xi,\xi)|^{n_0-1 }t^{n_0-1}|\varphi(\xi)|^{2n_0} \frac{4^{n_0-1}}{(\pi L)^{n_0-1}}
\end{multline*}
and thus by the change of variable $k_j=n_j-1$ we get
\begin{multline*}
J!\sum_{n>J} |\mathcal D_{J,n}|\frac{t^n}{n!} \leq\\
 \sum_{k_0+\hdots+k_J\geq 1,(\eta_j)_{j=1,\hdots,J}} (k_0 +1)\prod_{j=1}^{ J} \frac{4^{k_j+1}}{(\pi L)^{k_j+1}} (t|\tilde F(\xi,\eta_j)|)^{k_j+1}|\varphi(\eta_j)|^{2(k_j+1)} (t |\tilde F(\xi,\xi)|)^{k_0}|\varphi(\xi)|^{2(k_0+1)}\frac{4^{k_0}}{(\pi L)^{k_0}}.
\end{multline*}
We factorize by $t^J$ and get
\begin{multline*}
J! t^{-J}\sum_{n>J}| \mathcal D_{J,n}|\frac{t^n}{n!} \leq \\
 \sum_{k_0+\hdots+k_J\geq 1} (k_0 +1)\sum_{(\eta_j)_{j=1,\hdots,J}}\prod_{j=1}^{ J} \frac{4^{k_j+1}}{(\pi L)^{k_j+1}} t^{k_j}|\tilde F(\xi,\eta_j)|^{k_j+1}|\varphi(\eta_j)|^{2(k_j+1)}  t^{k_0}|\tilde F(\xi,\xi)|^{k_0}|\varphi(\xi)|^{2(k_0+1)}\frac{4^{k_0}}{(\pi L)^{k_0}}.
\end{multline*}
We use that $\tilde F, \varphi$ are bounded and that
\[
\frac4{\pi L} \sum_{\eta} |\varphi(\eta)|^2 \leq C(\varphi)
\]
to get
\[
\sum_{n>J} |\mathcal D_{J,n}|\frac{t^n}{n!} \leq \frac{(Ct)^j}{J!} \sum_{k_0+\hdots+k_J\geq 1} (k_0 +1)\prod_{j=0}^{ J} \frac{(4Ct)^{k_j}}{(\pi L)^{k_j}} |\varphi(\xi)|^2.
\]

Because $k_0+\hdots+k_J \geq 1$, at least one of the $k_j$ is bigger than $1$, we get
\begin{multline*}
\sum_{n>J} |\mathcal D_{J,n}|\frac{t^n}{n!} 
\leq \frac{(Ct)^j}{J!} \frac{4Ct}{(\pi L)}\sum_{k_0} (k_0 +2)\frac{(4Ct)^{k_0}}{(\pi L)^{k_0}}
\Big(
\sum_k \frac{(4Ct)^{k}}{(\pi L)^{k}}\Big)^J |\varphi(\xi)|^2 \\
+ J \frac{t^j}{J!} \frac{4Ct}{(\pi L)} \sum_{k_0} (k_0 +1)\frac{(4Ct)^{k_0}}{(\pi L)^{k_0}}\Big(
\sum_k \frac{(4Ct)^{k}}{(\pi L)^{k}}\Big)^J |\varphi(\xi)|^2 .
\end{multline*}
Therefore, if $L\geq \frac{8C}{\pi}t $, we get
\[
\sum_{n>J} |\mathcal D_{J,n}|\frac{t^n}{n!} \leq \frac{t^j}{J!}(J+1) \frac{4C t}{(\pi L)}\sum_{k_0} (k_0 +2)2^{-k_0}
2^J |\varphi(\xi)|^2 .
\]
We deduce that
\[
\sum_J \sum_{n>J} \frac{t^n}{n!} \mathcal D_{J,n} \leq C \frac{t(1+t)}{L}e^{2t} |\varphi(\xi)|^2.
\]

We have 
\[
|\widetilde{\mathcal D}_n|
\leq \frac{n(n-1)}{2}\frac2{\pi L} \Big( \frac2{\pi L} \sum_\eta |\varphi(\eta)|^2 \Big)^{n-2} \Big( \frac2{\pi L}\sum_\eta |\varphi(\eta)|^2\Big) |\varphi(\xi)|^2.
\]
We deduce 
\[
|\widetilde{\mathcal D}_n|\leq \frac{C^{n}}{L} |\varphi(\xi)|^2.
\]
Therefore,
\[
\sum_{n} |\widetilde{\mathcal D}_n| \frac{t^n}{n!} \leq  \frac{e^{Ct}}{L}|\varphi(\xi)|^2,
\]
that is
\[
|\mathcal D(\xi)|\leq   C \frac{t(1+t)}{L}e^{2t} |\varphi(\xi)|^2 + \frac{e^{Ct}}{L}|\varphi(\xi)|^2.
\]
Note that if $L \leq \frac{8C}{\pi} t$ then we use that
\[
|\mathcal D(\xi)| \leq 2 |\varphi(\xi)|^2 \leq \frac{16C}{\pi} \frac{t}{L} |\varphi(\xi)|^2 \leq C \frac{t(1+t)}{L}e^{2t}|\varphi(\xi)|^2 + \frac{e^{Ct}}{L} |\varphi(\xi)|^2.
\]

Because for all $\eta,\xi$, we have 
\[
\partial_\eta (\tilde F |\varphi(\eta)|^2) \leq C (|\varphi(\eta)|^2 + \frac{|\varphi(\eta)|}{\an{\eta}}),
\]
We deduce 
\[
\Big| \frac2{\pi L} \sum_\eta \tilde F(\xi,\eta) |\varphi(\eta)|^2 - \frac1{\pi}\int_{\R} \tilde F(\xi,\eta)|\varphi(\eta)|^2 d\eta\Big| \leq \frac{C}{L} .
\]

We get, by definition of $\mathcal E$, \eqref{Efin},
\[
|\mathcal E(\xi)| \leq \frac{Ct}{L} |\varphi(\xi)|^2.
\]

Summing up, we get
\begin{multline*}
|I(\xi) |
\leq  C \frac{t(1+t)}{L}e^{2t} |\varphi(\xi)|^2 + \frac{e^{Ct}}{L} |\varphi(\xi)|^2\\
+ C(\varphi)D\sqrt L M(t,L,D)^{1/3}e^{-cD^2} \\
+ 2^{t/T_2} \Big[ D\sqrt L (2^{-N} + \sum_{n\geq N/2}\frac1{n!} + 3R^{-(s-s')}) + \varepsilon C(L,D,R,N)\Big]  |\varphi(\xi)|.
\end{multline*}

Optimizing $N,D$ and $R$ in $\varepsilon,t$, we get that for $t\in \R_+$, $L\geq 1$, there exists $f_{t,L} : \R_+^* \rightarrow \R_+$ such that $f_{t,L}$ goes to $0$ at $0$ and 
\[
|I(\xi)|  \leq \frac{e^{C(1+t)}}{L}  |\varphi(\xi)|^2 + f_{L,t}(\varepsilon)  |\varphi(\xi)|.
\]

Noticing that
\[
\frac2{\pi }\int d\eta \tilde F(\xi,\eta) \frac1{1+\eta^2}\Big) = \frac1{\sqrt 3} \frac1{3+\xi^2} \frac1{\sqrt{1+\xi^2/4}},
\]
we get the result.

\end{proof}

\bibliographystyle{plain}
\bibliography{WTbib}

\end{document}